\newtheorem{thm}{Theorem}[section]
\newtheorem{lemma}[thm]{Lemma}
\newtheorem{cor}[thm]{Corollary}
\theoremstyle{definition}
\newtheorem{remark}[thm]{Remark}
\def\XXint#1#2#3{{\setbox0=\hbox{$#1{#2#3}{\int}$}
         \vcenter{\hbox{$#2#3$}}\kern-.5\wd0}}
\def\R{\mathbb{R}}
\def\e{\varepsilon}
\def\N{\mathbb{N}}
\def\loc{\text{loc}}
\numberwithin{equation}{section}
\begin{document}

\title{Compactness and Large-Scale Regularity  \\ for  Darcy's Law}

\author{
Zhongwei Shen\thanks{Supported in part by NSF grant DMS-1856235.}}
\date{}
\maketitle

\begin{abstract}

This paper is concerned with the quantitative homogenization of the steady Stokes equations with the Dirichlet condition 
in a periodically perforated domain.
Using a compactness method, we establish the large-scale  interior $C^{1, \alpha}$ and Lipschitz estimates for the velocity
as well as the corresponding estimates for the pressure.
These estimates, when combined with the classical regularity estimates for the Stokes equations,
yield the uniform  Lipschitz estimates.
As a consequence, we also obtain the uniform $W^{k, p}$ estimates for $1<p<\infty$.
 
\medskip

\noindent{\it Keywords}:  Stokes equations; perforated domain; large-scale regularity; Darcy law.

\medskip

\noindent {\it MR (2020) Subject Classification}: 35Q35;  35B27; 76D07.

\end{abstract}


\section{Introduction}\label{section-1}

In this paper we continue the study of the quantitative  homogenization of the steady Stokes equations
for an incompressible viscous fluid,
\begin{equation}\label{Stokes}
\left\{
\aligned
- \e^2 \mu  \Delta u_\e + \nabla p_\e & = f,\\
\text{\rm div}(u_\e) & =0,
\endaligned
\right.
\end{equation}
with a no-slip (Dirichlet) boundary  condition on solid pores,
 in a periodically perforated  domain in $\R^d$, $d\ge 2$.
In \eqref{Stokes}, $\mu>0$ is the viscosity constant, and
we have normalized the velocity vector by a factor $\e^2$, where $\e>0$ is the period.
It is well known 
  that as $\e\to 0$, the effective equations for \eqref{Stokes}
are  given by a Darcy law \cite{SP, Tartar-80, Allaire-89, LA-1990, Allaire-91a, Mik-1991, Allaire-1997}.
In \cite{Shen-1} we established the sharp $O(\sqrt{\e}) $ convergence rate in a bounded domain
by constructing some boundary correctors.
In this paper we will investigate the large-scale regularity problem for solutions $(u_\e, p_\e)$.

To describe the porous domain, we let $Y=(0, 1)^d$ be an open unit cube and
$Y_s$ (solid part)  an open subset of $Y$ with Lipschitz boundary.
Throughout the paper we assume that dist$(\partial Y, \partial Y_s)>0$ and that
$Y_f= \overline{Y} \setminus \overline{Y_s}$ (the fluid part) is connected.
Let
\begin{equation}\label{omega}
\omega  =\bigcup_{z\in \mathbb{Z}^d} (Y_f +z)
\end{equation}
be the periodic repetition of $Y_f$. For $R>0$, let
\begin{equation}\label{0-1}
Q_R=(-R, R)^d \quad \text{ and } \quad Q_R^\e =Q_R \cap \e \omega.
\end{equation}

The following are the main results of the paper.

\begin{thm}\label{main-thm-1}
Let $(u_\e, p_\e)\in H^1(Q_R^\e; \R^d) \times L^2(Q_R^\e)$ be a weak solution of
\begin{equation}\label{D-1}
\left\{
\aligned
-\e^2 \mu \Delta u_\e +\nabla p_\e &  =f & \quad & \text{ in } Q_R^\e,\\
\text{\rm div} (u_\e) & =0 & \quad & \text{ in } Q_R^\e,\\
u_\e & =0 & \quad & \text{ on } Q_R \cap \partial (\e \omega),
\endaligned
\right.
\end{equation}
where $0< \e<R/2$ and $f\in C^\alpha (Q_R; \R^d)$ for some $\alpha \in (0, 1)$.
Then
\begin{equation}\label{main-1}
\aligned
 & \e \left(\fint_{Q_r} |\nabla u_\e|^2 \right)^{1/2}
+\left(\fint_{Q_r} |u_\e|^2 \right)^{1/2}
& \le C \left\{ 
\left(\fint_{Q_R} |u_\e|^2 \right)^{1/2}
 + R^\alpha \| f\|_{C^{0, \alpha} (Q_R)} \right\}
\endaligned
\end{equation}
for any $\e\le  r< R/2$, where $C$ depends only on $d$, $\mu$, $\alpha$, and $Y_s$.
\end{thm}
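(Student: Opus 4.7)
The plan is to use the Avellaneda--Lin compactness scheme: at intermediate scales, solutions of \eqref{D-1} are compared to solutions of the homogenized Darcy system, which are H\"older continuous; a one-step excess decay iterated across the scales $\theta^j$ yields the estimate in Theorem \ref{main-thm-1}. After the rescaling $u_\e(x) \mapsto u_\e(Rx)/M$, $p_\e(x) \mapsto R p_\e(Rx)/M$, $f(x) \mapsto R f(Rx)/M$ with $M$ equal to the right-hand side of \eqref{main-1}, I reduce to $R = 1$ under the normalization $\bigl(\fint_{Q_1}|u_\e|^2\bigr)^{1/2} + \|f\|_{C^{0,\alpha}(Q_1)} \le 1$; the rescaled equation keeps the same form, with the ratio $\e/R$ unchanged, so the task becomes bounding $\bigl(\fint_{Q_r}|u_\e|^2\bigr)^{1/2}$ uniformly for $\e \le r < 1/2$.

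The central compactness lemma asserts: for every $\delta > 0$ there exists $\e_0 > 0$ such that whenever a weak solution of \eqref{D-1} on $Q_1^\e$ satisfies $\e \le \e_0$ and the above normalization, there is a Darcy solution $(u_0, p_0)$ on $Q_{3/4}$ with $\mu u_0 = K(f - \nabla p_0)$, $\mathrm{div}(u_0) = 0$, and $\|u_\e - u_0\|_{L^2(Q_{3/4})} \le \delta$, where $K$ is the permeability tensor of $Y_f$. I would prove this by contradiction: extend the violating $u_k$ by zero in the obstacles, normalize $p_k$ on each fluid component, and use energy estimates to get $u_k$ bounded in $L^2(Q_{3/4})$ and $\e_k \nabla u_k$ bounded in $L^2$; then Tartar's restriction/extension operator for perforated domains identifies the limit as a solution of the Darcy system.

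Combined with the Schauder estimate for $p_0$ (which satisfies the constant-coefficient elliptic equation $\mathrm{div}(K\nabla p_0) = \mathrm{div}(Kf)$, hence $u_0 \in C^{0,\alpha}(Q_{1/2})$ with norm controlled by $\|u_0\|_{L^2(Q_{3/4})} + \|f\|_{C^{0,\alpha}}$), the compactness lemma produces a one-step excess decay $\inf_{c \in \R^d} \bigl(\fint_{Q_\theta}|u_\e - c|^2\bigr)^{1/2} \le \frac{1}{2}\theta^\alpha$ once $\theta$ is fixed small and $\delta$ is absorbed into the $\theta^\alpha$-term. Rescaling and iterating gives $\inf_c \bigl(\fint_{Q_r}|u_\e - c|^2\bigr)^{1/2} \le C r^\alpha$ for all $r \in [\e/\e_0,\, 1/2]$. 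The $L^\infty$-type bound \eqref{main-1} then follows by a standard telescoping argument: the minimizers $c_r$ satisfy $|c_{2r} - c_r| \le C r^\alpha$, so $|c_r|$ is uniformly bounded by $\bigl(\fint_{Q_1}|u_\e|^2\bigr)^{1/2} + \|f\|_{C^{0,\alpha}}$, whence so is $\bigl(\fint_{Q_r}|u_\e|^2\bigr)^{1/2}$. The residual range $r \in [\e, \e/\e_0]$ is handled by the classical $L^2$ interior estimate for the Stokes system after rescaling the period to $O(1)$. The gradient piece $\e \bigl(\fint_{Q_r}|\nabla u_\e|^2\bigr)^{1/2}$ comes from Caccioppoli's inequality for the $\e$-rescaled Stokes system (test with $\phi^2 u_\e$ for a cutoff $\phi$, control the pressure via Tartar's operator), which produces the $\e$ prefactor naturally.

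The principal obstacle is the compactness step for the pressure: $p_\e$ is only determined up to a constant on each fluid component and has no a priori uniform $L^2$ bound. Tartar's restriction/extension operator on the perforated domain, together with a careful normalization of $p_\e$, is essential to obtain a uniform pressure estimate and to identify the limit $p_0$ and hence the Darcy system satisfied by $u_0$. A secondary difficulty is propagating the $C^{0,\alpha}$ regularity of $f$ through the homogenization limit so that the Schauder estimate on $p_0$ delivers the clean H\"older control on $u_0$ needed to close the one-step improvement at the chosen scale $\theta$.
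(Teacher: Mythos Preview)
Your overall architecture (compactness $\Rightarrow$ one-step decay $\Rightarrow$ iteration $\Rightarrow$ Caccioppoli for the gradient) matches the paper, but the form of the compactness and the subtracted quantity is wrong, and this makes the scheme collapse.

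The claim that $\|u_\e-u_0\|_{L^2(Q_{3/4})}\le\delta$ with $u_0=\mu^{-1}K(f-\nabla p_0)$ is false: $u_\e$ is extended by zero on the solid inclusions, which occupy a fixed positive volume fraction, while $u_0$ is generically nonzero there, so $\|u_\e-u_0\|_{L^2}$ does not tend to zero. Only \emph{weak} convergence $u_\e\rightharpoonup u_0$ holds. What the paper proves (Theorem~\ref{C-thm}) is the strong convergence
\[
u_{\e_j}-\mu^{-1}W(x/\e_j)\,(f-\nabla p_0)\to 0\quad\text{in }L^2,
\]
with the oscillating corrector $W(x/\e)$ in place of the constant matrix $K$. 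Establishing this requires the boundary-layer estimate \eqref{b-0}/\eqref{high-1}, proved via reverse H\"older inequalities, which your outline does not address.

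For the same reason your one-step improvement $\inf_{c\in\R^d}\bigl(\fint_{Q_\theta}|u_\e-c|^2\bigr)^{1/2}\le\tfrac12\theta^\alpha$ cannot hold: already for the exact solution $u_\e=\mu^{-1}W(x/\e)E$ the quantity $\bigl(\fint_{Q_\theta}|u_\e-\fint_{Q_\theta}u_\e|^2\bigr)^{1/2}$ is of order $|E|$ uniformly in $\theta\gg\e$. Moreover $u_\e-c$ does not vanish on $\partial(\e\omega)$, so it is not a solution of \eqref{D-1} and the rescale-and-iterate step is illegitimate. The paper's correct excess is $\inf_{E\in\R^d}\bigl(\fint_{Q_\theta}|u_\e-\mu^{-1}W(x/\e)E|^2\bigr)^{1/2}$ (Lemma~\ref{lemma-L-1}); since $W(x/\e)E$ solves the homogeneous equations with zero boundary data on $\partial(\e\omega)$ (Remark~\ref{remark-L-1}), the iteration (Lemma~\ref{lemma-L-2}) and the telescoping bound on $|E(k)|$ (Lemma~\ref{lemma-L-3}) go through and yield \eqref{L-in-1}. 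Replace your constant $c$ by $\mu^{-1}W(x/\e)E$ throughout and supply the strong-convergence step with correctors; the rest of your plan is then essentially the paper's.
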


In \eqref{main-1} (and thereafter)   we have extended $u_\e$ to $Q_R$ by zero.
In the next theorem,  $W(y)= (W_j^i (y)) $ is a 1-periodic $d\times d$ matrix-valued function, defined  by the cell problem
\eqref{W}.

\begin{thm}\label{main-thm-2}
Let $(u_\e, p_\e)$ be the same as in Theorem \ref{main-thm-1}.
Then
\begin{equation}\label{main-2}
\aligned
& \inf_{E\in \R^d} 
 \left(\fint_{Q_r} | \e \nabla u _\e - \mu^{-1}\nabla W(x/\e) E|^2\right)^{1/2}
+\inf_{E\in \R^d} 
 \left(\fint_{Q_r} |  u_\e -  \mu^{-1} W(x/\e) E|^2\right)^{1/2}\\
& \le C \left(\frac{r}{R} \right)^\beta
\left\{ \left(\fint_{Q_R} |u_\e|^2 \right)^{1/2}
+ R^\alpha \| f\|_{C^{0, \alpha}(Q_R)} \right\}
\endaligned
\end{equation}
for any $0<\e \le r < R/2$,
where $0< \beta< \alpha$ and $C$ depends only on $d$, $\mu$, $\alpha$, $\beta$, and $Y_s$.
\end{thm}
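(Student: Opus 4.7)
My plan is to prove Theorem \ref{main-thm-2} by the Avellaneda--Lin compactness scheme: establish a one-step excess-decay lemma by contradiction, then iterate across dyadic scales. Theorem \ref{main-thm-1}, the qualitative homogenization theory of \cite{Tartar-80, Allaire-89, LA-1990, Mik-1991} for Stokes in perforated domains, and Schauder regularity for the Darcy limit together supply the ingredients for the compactness step.

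\textbf{One-step improvement.} The key lemma is that for every $\beta \in (0, \alpha)$ there exist $\theta = \theta(\beta) \in (0, 1/4)$ and $C > 0$ so that whenever $0 < \e \le \theta$ and $(u_\e, p_\e)$ solves \eqref{D-1} on $Q_1^\e$,
\begin{equation*}
\inf_{E \in \R^d} \left(\fint_{Q_\theta} |u_\e - \mu^{-1} W(x/\e) E|^2\right)^{1/2}
\le \theta^{1+\beta} \Big\{ \big(\fint_{Q_1} |u_\e|^2\big)^{1/2} + \|f\|_{C^{0,\alpha}(Q_1)} \Big\}.
\end{equation*}
If this failed, one would produce sequences $\e_k \to 0$ and $(u_k, p_k, f_k)$ with the right-hand side normalized to $1$ violating the bound for every admissible $\theta$. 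Theorem \ref{main-thm-1} and Poincar\'e in the perforated geometry give uniform $L^2(Q_{1/2})$ bounds on $u_k$ and $\e_k \nabla u_k$. Using Tartar's restriction and Allaire's extension operators, one extracts subsequences with $u_k \rightharpoonup u_0$ in $L^2$ and a renormalized extension of $p_k$ converging weakly to $p_0$. Two-scale convergence identifies the limit: $u_0 = \mu^{-1} K(f_0 - \nabla p_0)$ and $\mathrm{div}(u_0) = 0$ on $Q_{1/2}$, where $K = \fint_Y W$; moreover the strong corrector identity $u_k - \mu^{-1} W(\cdot/\e_k)(f_0 - \nabla p_0) \to 0$ in $L^2(Q_{1/2})$ holds. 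Since $p_0$ solves an elliptic equation with $C^{0,\alpha}$ data, Schauder theory yields $\nabla p_0 \in C^{0,\alpha}$, so $(f_0 - \nabla p_0)(x) = E_0 + O(|x|^\alpha)$ near the origin. Taking $E = E_0 := (f_0 - \nabla p_0)(0)$ beats $\theta^{1+\beta}$ once $\theta$ is small relative to the Schauder constant, contradicting the supposed failure.

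\textbf{Iteration.} A direct calculation using the cell equation shows that $u_\e - \mu^{-1} W(x/\e) E$ again solves the Stokes system \eqref{D-1} with the \emph{same} forcing $f$, after replacing $p_\e$ by $p_\e - \mu^{-1} \e \Pi(x/\e) E - \mu^{-1} E \cdot x$, where $\Pi$ denotes the cell pressure associated with $W$ (the constant forcing $\mu^{-1}E$ is absorbed into a linear pressure). Rescaling $v(y) = u_\e(ry)$ to $Q_1$, which converts the microscale to $\e/r$, and applying the one-step improvement at each dyadic scale $r_k = \theta^k R/2$ with $r_k \ge \e/\theta$, I choose the optimal $E_k$ greedily and obtain
\begin{equation*}
\inf_{E \in \R^d} \Big(\fint_{Q_{r_k}} |u_\e - \mu^{-1} W(x/\e) E|^2\Big)^{1/2}
\le C \theta^{k\beta} \Big\{ \big(\fint_{Q_R} |u_\e|^2\big)^{1/2} + R^\alpha \|f\|_{C^{0,\alpha}(Q_R)} \Big\}.
\end{equation*}
Monotonicity of $L^2$ averages interpolates intermediate scales, and the residual range $\e \le r < \e/\theta$ is absorbed into the constant using Theorem \ref{main-thm-1}. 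The matching gradient bound in \eqref{main-2} is obtained by applying the $\e\nabla u_\e$ estimate of Theorem \ref{main-thm-1} to the residual $u_\e - \mu^{-1} W(x/\e) E$ on $Q_{2r}$, using the identity $\e \nabla_x[W(x/\e) E] = (\nabla_y W)(x/\e) E$ to identify the resulting expression with that in \eqref{main-2}.

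\textbf{Main obstacle.} The genuinely delicate step is the compactness argument, specifically the construction of a uniformly $L^2$-bounded extension of $p_\e$ from $\e\omega$ to $Q_{1/2}$. The natural energy estimate controls only $\e\nabla u_\e$, and extending the pressure requires a cell-by-cell Bogovskii--Tartar construction while tracking the small divergence and trace errors so that the extended pressure generates precisely the Darcy limit with the correct permeability $K$. Once this compactness and the $C^{1,\alpha}$ regularity of $p_0$ are in hand, the remaining iteration and gradient transfer are the standard Avellaneda--Lin machinery.
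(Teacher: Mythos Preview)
Your outline follows the same Avellaneda--Lin architecture as the paper (one-step improvement by contradiction/compactness, iteration, Caccioppoli for the gradient term), so the route is not different. The genuine gap is in the compactness step: you assert that ``the strong corrector identity $u_k - \mu^{-1}W(\cdot/\e_k)(f_0-\nabla p_0) \to 0$ in $L^2(Q_{1/2})$ holds'' as a consequence of classical two-scale convergence and the references \cite{Tartar-80, Allaire-89, LA-1990, Mik-1991}. Those results establish the strong corrector convergence only when the sequence $\{u_{\e_k}\}$ shares a common Dirichlet datum on a fixed outer boundary, so that the full energy identity is available. Here the $u_{\e_k}$ are merely local solutions on $Q_1^{\e_k}$ with a uniform $L^2$ bound and no control on $\partial Q_1$; two-scale convergence is weak and does not by itself upgrade to the strong $L^2$ statement you need. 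This is exactly the point the paper flags in the introduction as its main technical contribution.

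The paper closes this gap via Section~\ref{section-3}: a reverse H\"older inequality for the local quantity $g_\e$ (Theorem~\ref{RH-thm}), proved using the nontangential estimates of \cite{FKV} together with Gehring self-improvement, yields the boundary-layer estimate $\|\e\nabla u_\e\|_{L^2(Q_{1+\delta}\setminus Q_{1-\delta})}\le C\delta^\sigma$ (Corollary~\ref{cor-high}). In Step~4 of the proof of Theorem~\ref{C-thm} this is precisely what lets one localize $\e_j^2\int_{Q_1}|\nabla u_{\e_j}|^2$ with a cutoff $\varphi_\delta$ and pass to the limit, the $\delta^\sigma$ bound absorbing the error near $\partial Q_1$. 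You also misidentify the ``main obstacle'': the pressure extension is in fact handled by the standard Tartar restriction operator exactly as in \cite{Allaire-1997} (Step~2 of Theorem~\ref{C-thm}); it is the strong convergence of the \emph{velocity} corrector that requires new ideas. Finally, a minor correction: your one-step exponent should be $\theta^\beta$, not $\theta^{1+\beta}$, since $\|W(\cdot/\e)(f-\nabla p_0 - E_0)\|_{L^2(Q_\theta)} = O(\theta^\alpha)$; your own iteration conclusion $\theta^{k\beta}$ is consistent only with $\theta^\beta$.
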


Theorems \ref{main-thm-1} and \ref{main-thm-2} give the large-scale interior  Lipschitz and $C^{1, \alpha}$ estimates
for the Stokes equations \eqref{Stokes} in a periodically perforated domain.
We also obtain the  corresponding large-scale estimates for the pressure $p_\e$.
See Section \ref{section-p}.
We remark that the large-scale estimates for $(u_\e, p_\e)$ hold under the assumption that $Y_s$ is an open set with Lipschitz boundary.
If the boundary of $Y_s$ is smooth,
we may combine the classical regularity estimates for the Stokes equations (with $\e=1$) in $Y\setminus \overline{Y_s}$ with these large-scale estimates to 
obtain regularity estimates that are uniform   in $\e>0$.
In particular, this yields
\begin{equation}\label{full-0}
\e \|\nabla u_\e\|_{L^\infty(Q_{R/2})}
+ \| u_\e\|_{L^\infty(Q_{R/2})} 
\le C \left\{ \left(\fint_{Q_R} |u_\e|^2 \right)^{1/2}
+ R^\alpha \| f\|_{C^{0, \alpha}(Q_R)}\right\}
\end{equation}
for $0< \e\le 1$ and $R>0$,
where $C$ depends only on $d$, $\mu$, $\alpha$, and $Y_s$.
See  Remark \ref{remark-small}.

Our approach to Theorems  \ref{main-thm-1} and \ref{main-thm-2} is based on a compactness method, originated in the study of regularity problems
 for nonlinear PDEs  and minimal surfaces.
 The method was  introduced in a seminal work \cite{AL-1987} by M. Avellaneda and F. Lin  to  the study of the quantitative homogenization theory
 (see \cite{Gu-Shen} for the use of the compactness method for the Stokes equations with periodic coefficients in a fixed domain).
Let $\{ (u_{\e_j}, p_{\e_j} )\}$ be a sequence of solutions of \eqref{D-1} with $R=4$ and $\e=\e_j \to 0$.
Assume that $\{ u_{\e_j} \}$ is bounded in $L^2(Q_4; \R^d)$. 
To apply the compactness method to the Stokes equations in perforated domains with the Dirichlet condition,
the key is to extract a subsequence, still denoted by $\{( u_{\e_j}, p_{\e_j})\}$, such that 
$P_{\e_j}  \to p_0$  in $L^2(Q_1)$, where $P_\e$ is a suitable extension of $p_\e$ defined by \eqref{P},  and that 
\begin{equation}\label{s-0}
u_{\e_j} - \mu^{-1} W(x/\e_j ) (f-\nabla p_0) \to 0 \quad \text{ in } L^2(Q_1; \R^d).
\end{equation}
While  the strong  convergence $P_{\e_j} $ in $L^2$ may be proved as in the classical work 
\cite{Tartar-80, Allaire-89, LA-1990, Allaire-91a, Mik-1991, Allaire-1997} 
on Darcy's law, the strong convergence for $u_\e$ in \eqref{s-0} was only known previously in the case
when the sequence $\{u_{\e_j} \}$  has  the same Dirichlet data on a fixed boundary \cite{Allaire-91a, Allaire-1997}.
One of the main technical contributions of this work is establishing  the compactness property \eqref{s-0}
for a sequence of solutions with a uniform $L^2$ bound  for $u_\e$.
This is done by first proving a boundary layer estimate,
\begin{equation}\label{b-0}
\left( \int_{Q_{1+\delta} \setminus Q_{1-\delta}} |\e \nabla u_\e|^2\, dx \right)^{1/2}
\le C \delta^{\sigma} \big\{ \| u_\e\|_{L^2(Q_4)} + \| f\|_{L^\infty(Q_4)} \big\}
\end{equation}
for $\e \le \delta< 1/2$, where $C$ and $\sigma>0$ depend only on $d$, $\mu$,  and $Y_s$.
The proof of \eqref{b-0} uses the self-improving property of the (weak) reverse H\"older inequalities as well as an energy estimate in \cite{Shen-1}
and the nontangential-maximal-function estimates in \cite{FKV} for the Stokes equations in a bounded (unperforated) Lipschitz domain.
With \eqref{b-0} at our disposal, \eqref{s-0} is proved by applying  the  two-scale convergence method.

The large-scale regularity estimates in the homogenization theory  have been studied extensively in recent years.
Besides the compactness method, there is another approach
that is  based on the convergence rate and is  effective in both the periodic and non-periodic settings 
for second-order elliptic systems with oscillating coefficients (see \cite{Armstrong-2016, Otto-Fisher, Armstrong-book, Shen-book} for references).
In a recent work \cite{Shen-1} the present author was able to establish the sharp convergence rate for the Stokes equations 
\eqref{Stokes} in a periodically perforated domain $\Omega_\e$.
However, since the results are proved by energy estimates, the bounds for solutions $u_\e$ and their divergences cannot be separated. 
As a result, the error bound in \cite{Shen-1} requires a strong condition for the normal component of $u_\e$ on the fixed boundary $\partial \Omega$,
which is difficult  to handle in the approximation scheme.

 For second-order elliptic equations and systems in perforated domains, the large-scale regularity estimates 
  may be found  in \cite{Yeh-2010, Yeh-2016-u, Yeh-2016-1, Yeh-2015,
  Chase-Russell-2017, Armstrong-2018, Shen-2020, Gloria-2021} , 
  where the Neumann type conditions are imposed on the boundaries of
  the solid obstacles.  In this case,  the effective equations are of the same type
  and the effective solutions share the same boundary data as $u_\e$ on the fixed boundary.
  To the best of the author's knowledge,  the paper \cite{Masmoudi-2004} by N. Masmoudi seems 
  to be the only one  that treats the Stokes equations with the Dirichlet condition
  on the boundaries of solid pores.
  In particular, the uniform $W^{k, p}$ estimates  for 
  the Stokes equations \eqref{Stokes} in $\e \omega$  with smooth boundary were stated in \cite[Theorems 4.1 and 4.2]{Masmoudi-2004} without proof
   (no proof  has appeared since).   
  As a consequence of Theorem \ref{main-thm-1}, we are able to provide a proof for the uniform $W^{k, p}$ estimates.
  
 \begin{thm}\label{main-thm-3}
 Assume that  $\partial Y_s$ is of $C^{1, \alpha}$ for some $0< \alpha<1$. 
 Let $F\in L^q (\R^d; \R^d)$  and $f\in L^q(\R^d, \R^{d\times d}) $  for some $1< q<\infty$.
Then there exist a  unique $u_\e\in W^{1, q}_0(\e \omega; \R^d)$ such that
 \begin{equation}\label{main-3-0}
 \left\{
 \aligned
 -\e^2 \mu \Delta u_\e  +\nabla p_\e  & = F + \e\,  \text{\rm div} (f) & \quad & \text{ in }\  \e \omega,\\
 \text{\rm div} (u_\e ) & =0 & \quad & \text{ in }\ \e \omega,\\
 u_\e & =0 & \quad  & \text{ on }\  \partial (\e\omega),
 \endaligned
 \right.
  \end{equation}
 for some  $p_\e\in L^q_{\loc} (\e \omega)$.
 Moreover,
 \begin{equation}\label{main-3-1}
 \e \| \nabla u_\e  \|_{L^q(\e \omega)}
 + \| u_\e  \|_{L^q(\e \omega)} 
 + \e^{-1} \| \nabla p_\e\|_{W^{-1, q}(\e \omega)} 
 \le C\big\{ \| F \|_{L^q(\e \omega)} +  \| f\|_{L^q(\e \omega)} \big\},
 \end{equation}
 where $C$ depends only on $d$, $\mu$, $q$, and $Y_s$.
 \end{thm}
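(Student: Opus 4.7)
\textbf{Plan for Theorem~\ref{main-thm-3}.}

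The strategy is to combine the uniform pointwise Lipschitz estimate \eqref{full-0}---which itself follows from Theorem~\ref{main-thm-1} together with classical Stokes regularity in the smooth unperforated cell $Y\setminus\overline{Y_s}$---with a Calder\'on--Zygmund type real-variable argument in the spirit of Shen's framework. This promotes the standard $L^2$ energy estimate to $L^q$ for all $1<q<\infty$, with the range $q<2$ handled by duality. The pressure bound will fall out of the equation once the velocity estimate is in hand.

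For $q=2$, existence, uniqueness and the energy bound are obtained by Lax--Milgram applied to the coercive form $\e^2\mu\int \nabla v\cdot\nabla w$ on the divergence-free subspace of $H^1_0(\e\omega;\R^d)$, with the pressure $p_\e$ recovered via de Rham's theorem. The uniform Poincar\'e inequality on perforated domains, $\|v\|_{L^2(\e\omega)}\le C\e \|\nabla v\|_{L^2(\e\omega)}$ with $C$ depending only on $Y_s$, combined with testing against $u_\e$, yields $\e\|\nabla u_\e\|_{L^2} + \|u_\e\|_{L^2} \le C(\|F\|_{L^2}+\|f\|_{L^2})$. This is the baseline estimate to be upgraded.

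For $2<q<\infty$, I would apply Shen's real-variable $L^p$ lemma (see \cite{Shen-book}). The required input is a reverse H\"older bound for homogeneous local solutions, which is furnished by \eqref{full-0}: if $w$ solves the homogeneous Stokes system on $B_{2R}\cap \e\omega$ with zero trace on $B_{2R}\cap \partial(\e\omega)$, then
\[
\e\|\nabla w\|_{L^\infty(B_R)}+\|w\|_{L^\infty(B_R)}\le C\left(\fint_{B_{2R}}|w|^2\right)^{1/2}\qquad(R\ge \e).
\]
Given a ball $B$, decompose $F = F\chi_{4B}+F\chi_{(4B)^c}$ and $f=f\chi_{4B}+f\chi_{(4B)^c}$, and write $u_\e = v+w$, where $v$ solves \eqref{main-3-0} with the localized data (controlled by the $L^2$ estimate above) and $w$ is homogeneous on $4B\cap \e\omega$ (controlled by the display). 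Shen's lemma then yields $\e\|\nabla u_\e\|_{L^q(\e\omega)}+\|u_\e\|_{L^q(\e\omega)}\le C(\|F\|_{L^q}+\|f\|_{L^q})$ for all $2<q<\infty$. The range $1<q<2$ follows by a duality argument, since the formal adjoint of \eqref{main-3-0} is of the same Stokes form.

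For the pressure, the equation $\nabla p_\e = F+\e\,\text{div}(f) + \e^2\mu \Delta u_\e$, together with the Poincar\'e-type bound $\|F\|_{W^{-1,q}(\e\omega)}\le C\e\|F\|_{L^q(\e\omega)}$ inherited from the Dirichlet condition on $\partial(\e\omega)$, gives $\|\nabla p_\e\|_{W^{-1,q}}\le C\e(\|F\|_{L^q}+\|f\|_{L^q})$, so that $\e^{-1}\|\nabla p_\e\|_{W^{-1,q}}$ is controlled as required. The principal obstacle I anticipate is the rigorous derivation of \eqref{full-0} with constants uniform in $\e$: one must carefully match the large-scale estimate of Theorem~\ref{main-thm-1} (effective only on scales $r\ge \e$) with the single-cell classical $C^{1,\alpha}$ Stokes regularity obtained after the rescaling $x\mapsto x/\e$ on $Y\setminus\overline{Y_s}$, and this matching---which preserves uniformity precisely because of the $C^{1,\alpha}$ hypothesis on $\partial Y_s$---is the technical heart of the argument.
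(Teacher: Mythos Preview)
Your plan is correct and mirrors the paper's own proof almost exactly: Lax--Milgram for $q=2$, Shen's real-variable lemma (invoking the uniform $L^\infty$ bound \eqref{full-0}/\eqref{full-1}) for $q>2$, duality for $1<q<2$, and the pressure read off from the equation. The one point you omit is uniqueness for $q\neq 2$, which the paper handles separately (Step~5) via the Liouville property of Theorem~\ref{LP-thm}: a bounded solution with $F=f=0$ must equal $\mu^{-1}W(x/\e)E$ for some $E$, hence be periodic, hence vanish in $L^q$.
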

 
 \begin{thm}\label{main-thm-4}
 Assume that  $\partial Y_s$ is of $C^{k, \alpha} $ for some $k\ge 2$ and  $0< \alpha<1$. 
 Let $F\in W^{k-2, q} (\R^d; \R^d)$  for some $1< q<\infty$.
Then there exists a  unique $u_\e\in W^{k, q}_0(\e \omega; \R^d)$ such that
 \begin{equation}\label{main-4-0}
 \left\{
 \aligned
 -\e^2 \mu \Delta u_\e  +\nabla p_\e  & = F & \quad & \text{ in }\  \e \omega,\\
 \text{\rm div} (u_\e ) & =0 & \quad & \text{ in }\ \e \omega,\\
 u_\e & =0 & \quad  & \text{ on }\  \partial (\e\omega),
 \endaligned
 \right.
  \end{equation}
 for some  $p_\e\in L^q_{\loc} (\e \omega)$.
 Moreover,
 \begin{equation}\label{main-4-1}
\sum_{\ell=0}^k  \e^\ell  \| \nabla^\ell  u_\e  \|_{L^q(\e \omega)}
+\sum_{\ell=1}^{k} \e^{\ell-2} \| \nabla^\ell  p_\e\|_{W^{-1, q}(\e\omega)}
 \le C\sum_{\ell=0}^{k-2}  \e^\ell \|\nabla^\ell  F \|_{L^q(\e \omega)},
 \end{equation}
 where $C$ depends only on $d$, $\mu$, $q$, $k$, and $Y_s$.
 \end{thm}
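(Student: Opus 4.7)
The plan is to prove \eqref{main-4-1} as an a priori estimate by bootstrapping Theorem \ref{main-thm-3} with the classical $W^{k,q}$ regularity for the Stokes system on bounded smooth domains, applied after an $\e^{-1}$ rescaling; the existence and uniqueness of $u_\e\in W^{k,q}_0(\e\omega)$ then follow by approximation. Given $F\in W^{k-2,q}(\R^d)\subset L^q(\R^d)$, Theorem \ref{main-thm-3} (with $f\equiv 0$) produces a unique $u_\e\in W^{1,q}_0(\e\omega)$ and $p_\e\in L^q_{\loc}(\e\omega)$. I would then approximate $F$ in $W^{k-2,q}$ by $F_n\in C_c^\infty(\R^d)$; the corresponding solutions $u_n$ are classical by smooth-boundary Stokes theory, and the a priori estimate proved below applied to $u_n-u_m$ exhibits $\{u_n\}$ as a Cauchy sequence in $W^{k,q}_0(\e\omega)$ whose limit must coincide with $u_\e$.

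For the a priori bound, fix any $x_0\in \overline{\e\omega}$ and let $y_0=x_0/\e$. Set $v(y)=u_\e(\e y)$ and $\pi(y)=\e^{-1} p_\e(\e y)$; a direct computation shows that $(v,\pi)$ satisfies the standard ($\e=1$) Stokes system
\[
-\mu\Delta_y v+\nabla_y\pi=G(y), \qquad \text{\rm div}_y(v)=0 \quad \text{in } B(y_0,2)\cap\omega,
\]
with $v=0$ on $B(y_0,2)\cap\partial\omega$ and $G(y):=F(\e y)$. Because $\partial\omega$ is the $1$-periodic repetition of $\partial Y_s$, it is a $C^{k,\alpha}$ hypersurface of uniformly bounded geometry, so the classical interior and boundary $W^{k,q}$ regularity for the Stokes system on bounded smooth domains applies with a constant independent of $y_0$ and yields
\[
\sum_{\ell=0}^{k}\|\nabla^\ell v\|_{L^q(B(y_0,1)\cap\omega)}\le C\left\{\sum_{\ell=0}^{k-2}\|\nabla^\ell G\|_{L^q(B(y_0,2)\cap\omega)}+\|v\|_{L^q(B(y_0,2)\cap\omega)}\right\}.
\]
Undoing the rescaling (via $\|\nabla^\ell v\|_{L^q(B(y_0,r))}=\e^{\ell-d/q}\|\nabla^\ell u_\e\|_{L^q(B(x_0,\e r))}$, and similarly for $G$) and summing over a locally finite cover of $\e\omega$ by balls $\{B(x_i,\e)\}$ produces
\[
\sum_{\ell=0}^{k}\e^\ell\|\nabla^\ell u_\e\|_{L^q(\e\omega)}\le C\left\{\sum_{\ell=0}^{k-2}\e^\ell\|\nabla^\ell F\|_{L^q(\R^d)}+\|u_\e\|_{L^q(\e\omega)}\right\},
\]
and the last term is absorbed by Theorem \ref{main-thm-3}.

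For the pressure contribution, I would use the equation in the form $\nabla p_\e=F+\e^2\mu\Delta u_\e$: for $\ell\ge 2$ this yields $\|\nabla^{\ell-1}p_\e\|_{L^q(\e\omega)}\le \|\nabla^{\ell-2}F\|_{L^q}+\e^2\mu\|\nabla^\ell u_\e\|_{L^q}$, and since $\|\nabla^\ell p_\e\|_{W^{-1,q}}\le \|\nabla^{\ell-1}p_\e\|_{L^q}$, multiplying by $\e^{\ell-2}$ and combining with the velocity estimate just obtained delivers the required bound; the case $\ell=1$ is already contained in Theorem \ref{main-thm-3}. The main technical point is verifying that the constant in the classical Stokes $W^{k,q}$ estimate on the rescaled patches $B(y_0,2)\cap\omega$ is independent of $y_0$ --- this reduces to the uniform $C^{k,\alpha}$ character of $\partial\omega$ and holds by periodicity --- together with careful bookkeeping of the scaling factors, especially for $\nabla^\ell p_\e$ in the weaker $W^{-1,q}$ norm.
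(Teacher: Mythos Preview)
Your proposal is correct and follows essentially the same route as the paper: after reducing to $\e=1$ by rescaling, apply the classical local $W^{k,q}$ Stokes estimates on unit-scale patches, sum, and absorb the resulting $\|u\|_{L^q}$ term via Theorem~\ref{main-thm-3}; the pressure is handled via $\nabla p_\e=F+\e^2\mu\Delta u_\e$. The only cosmetic difference is that the paper sums over the translated cells $\widetilde{Y_f}+z=(1+\delta)Y\setminus\overline{Y_s}+z$ (a single fixed reference domain), whereas you cover by balls $B(y_0,2)\cap\omega$ and appeal to the uniform $C^{k,\alpha}$ geometry of $\partial\omega$ to get a uniform constant; both work.
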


The paper is organized as follows.
In Section \ref{section-2} we collect  some basic facts and estimates that will be used in later sections.
In Section \ref{section-3} we prove the crucial estimate \eqref{b-0}, which is used in the proof of
a compactness result, given in Section \ref{section-4}.
The proofs of Theorems \ref{main-thm-1} and \ref{main-thm-2} are given in Section \ref{section-5},
while the corresponding large-scale estimates for the pressure are established in Section \ref{section-p}.
Finally, Theorems \ref{main-thm-3} and \ref{main-thm-4} are
 proved in Section \ref{section-W}.
 

\section{Preliminaries}\label{section-2}

Let $Y=(0, 1)^d$ and $Y_s$ (solid part) be an open subset of $Y$ with Lipschitz boundary.
Throughout the paper we assume that  dist$(\partial Y, \partial Y_s)>0$ and that (the fluid part) 
$Y_f=\overline{Y} \setminus \overline{Y_s}$ is connected.

Let $\omega $ is given by \eqref{omega}.
Note that the unbounded domain $\omega$ is connected, 1-periodic, and $\partial \omega$ is locally Lipschitz.
Also, observe that dist$(\mathbb{Z}^d, \partial \omega)>0$.
For $1\le j \le d$, let
$( W_j (y), \pi_j (y))= (W_j^1 (y), \dots, W_j^d (y), \pi_j (y) ) \in H^1_{\loc}(\omega; \R^d)
\times L^2_{\loc} (\omega)$ be the 1-periodic solution of the cell problem,
\begin{equation}\label{W}
\left\{
\aligned
-\Delta W_j + \nabla \pi_j & =e_j & \quad & \text{ in } Y\setminus \overline{Y_s},\\
\text{\rm div} (W_j) & =0 & \quad & \text{ in } Y\setminus \overline{Y_s},\\
W_j & =0 & \quad & \text{ on } \partial Y_s,
\endaligned
\right.
\end{equation}
with $\int_{Y\setminus \overline{Y_s}}  \pi_j\, dy=0$, where $e_j=(0, \dots, 1, \dots, 0)$ with $1$ in the $j^{th}$ place.
Define
\begin{equation}\label{K}
K_j^i =\int_Y W_j^i (y)\, dy,
\end{equation}
where we have extended $W_j$ to $\R^d$ by zero.
The $d\times d$ matrix $K =(K_j^i)$, called the permeability matrix,  is symmetric and positive definite.
This follows readily from the observation
\begin{equation}\label{K-1}
K_j^i =\int_Y \nabla W_j^\ell \cdot \nabla W_i^\ell \, dy
\end{equation}
(the index $\ell$ is summed from $1$ to $d$).

Recall that $Q_R=(-R, R)^d$ and $Q_R^\e = Q_R\cap \e \omega$.

\begin{lemma}
Let $u \in  W^{1, q}(Q_R^\e)$ for some $R\in \e \N$ and $1\le q< \infty$.  
Assume $u=0$ on $Q_R \cap \partial(\e\omega)$.
Then
\begin{equation}\label{Poincare}
\| u \|_{L^q(Q_R^\e)} \le C \e \| \nabla u\|_{L^q(Q_R^\e)},
\end{equation}
where $C$ depends only on $d$, $q$,  and $Y_s$.
\end{lemma}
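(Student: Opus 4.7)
\bigskip

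\noindent\textbf{Proof proposal.} The plan is to reduce the inequality to its single-cell analogue and then sum over the $\varepsilon$-scale periodic cells that tile $Q_R$. The key cell-level statement is: for every $v\in W^{1,q}(Y_f)$ with $v=0$ on $\partial Y_s$, one has
\begin{equation*}
\|v\|_{L^q(Y_f)}\le C\,\|\nabla v\|_{L^q(Y_f)},
\end{equation*}
where $C$ depends only on $d$, $q$, and $Y_s$. I would establish this by a standard compactness/contradiction argument: if the inequality fails, there is a sequence $v_n\in W^{1,q}(Y_f)$ with $\|v_n\|_{L^q(Y_f)}=1$, $v_n|_{\partial Y_s}=0$, and $\|\nabla v_n\|_{L^q(Y_f)}\to 0$. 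Since $Y_f$ is a bounded Lipschitz domain, Rellich's theorem allows one to pass to a subsequence converging in $L^q(Y_f)$ to some $v_0$. The $L^q$-convergence of gradients to $0$, together with the connectedness of $Y_f$, forces $v_0$ to be a (nonzero) constant. On the other hand, the trace operator $W^{1,q}(Y_f)\to L^q(\partial Y_s)$ is continuous, so the traces of $v_n$ converge to the trace of $v_0$; but each $v_n$ has zero trace on $\partial Y_s$, which has positive $(d{-}1)$-dimensional measure. Hence $v_0\equiv 0$, contradicting $\|v_0\|_{L^q}=1$.

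With the cell inequality in hand, I would next exploit the hypothesis $R\in\varepsilon\mathbb{N}$ to decompose
\begin{equation*}
Q_R^\varepsilon \;=\; \bigcup_{z\in I_{R,\varepsilon}} \varepsilon(Y_f+z), \qquad I_{R,\varepsilon}:=\{z\in \mathbb{Z}^d: \varepsilon(Y+z)\subset Q_R\},
\end{equation*}
with the interiors of the cells disjoint. For each $z\in I_{R,\varepsilon}$, the restriction of $u$ to $\varepsilon(Y_f+z)$ belongs to $W^{1,q}$ and vanishes on $\varepsilon(\partial Y_s+z)$, because this set lies inside $Q_R\cap\partial(\varepsilon\omega)$ (here one uses $\operatorname{dist}(\partial Y,\partial Y_s)>0$, which ensures $\partial Y_s+z$ is well separated from $\partial Y+z$ and hence entirely contained in $Q_R$ whenever $z\in I_{R,\varepsilon}$). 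Applying the cell inequality to $v(y):=u(\varepsilon y+\varepsilon z)$ and rescaling yields
\begin{equation*}
\|u\|_{L^q(\varepsilon(Y_f+z))}\le C\varepsilon\,\|\nabla u\|_{L^q(\varepsilon(Y_f+z))}.
\end{equation*}

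Raising both sides to the $q$-th power and summing over $z\in I_{R,\varepsilon}$ gives the desired inequality. The main obstacle here is establishing the cell Poincaré inequality; the scaling and summation step is routine once the geometric compatibility afforded by $R\in\varepsilon\mathbb{N}$ and $\operatorname{dist}(\partial Y,\partial Y_s)>0$ is noted. I would expect the constant $C$ to be tracked explicitly through the rescaling so that its dependence on $d$, $q$, and $Y_s$ alone is transparent.
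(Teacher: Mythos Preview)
Your proposal is correct and follows essentially the same approach as the paper: after a dilation to $\varepsilon=1$, the paper covers $Q_R^1$ by unit cells and applies Poincar\'e's inequality on each cube, which is precisely your cell-level inequality followed by summation. The paper simply quotes the cell Poincar\'e inequality without proof, whereas you supply the standard compactness argument for it; otherwise the arguments coincide.
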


\begin{proof}
By dilation we may assume $\e=1$.
The result then  follows by covering $Q_R^1$ with unit cubes and applying Poincar\'e's inequality on each cube.
\end{proof}

Suppose 
$$
\left\{
\aligned
 - \e^2 \mu \Delta u_\e +\nabla p_\e  & =f \\
\text{\rm div} (u_\e) & =0 
\endaligned
\qquad \text{ in  }  Q_R^\e,
\right.
$$
with $u_\e=0$ in $Q_R\cap \partial (\e\omega)$. Let 
$$
v(x)=u_\e ( r x ), \quad q (x)=  r^{-1}  p_\e(rx ), \quad \text{ and } g(x)=f(rx),
$$
then
$$
\left\{
\aligned
  - ( \e/r )^2 \mu  \Delta v +\nabla q  & =g \\
\text{\rm div} (v) & =0 
\endaligned
\qquad \text{ in } Q_{R/r}^{\e/r},
\right.
$$
with $v=0$ in $Q_{R/r} \cap \partial ((\e/r) \omega)$.
This rescaling property will be used frequently in the paper.

\begin{lemma}
Let $(u_\e, p_\e)$ be a weak solution of  \eqref{Stokes} in $Q_R^\e $ with 
$u_\e=0$ in $Q_R\cap \partial (\e \omega)$, where $0< \e\le 1$ and  $R\in \e \N$.
Then
\begin{equation}\label{p-estimate}
\Big\| p_\e -\fint_{Q_R^\e} p_\e\Big \|_{L^2(Q_R^\e)}
\le C  R \big\{ \e \| \nabla u_\e \|_{L^2(Q_R^\e)} +  \| f\|_{L^2(Q^\e_R)}\big\},
\end{equation}
where $C$ depends only on $d$, $\mu$, and $Y_s$.
\end{lemma}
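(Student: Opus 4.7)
The plan is a standard duality argument combined with a Bogovskii-type construction. Since $\phi := p_\e - \fint_{Q_R^\e} p_\e$ has zero mean on $Q_R^\e$,
$$
\Big\|p_\e - \fint_{Q_R^\e} p_\e\Big\|_{L^2(Q_R^\e)} = \sup_\psi \int_{Q_R^\e} p_\e\, \psi,
$$
the supremum being over $\psi \in L^2(Q_R^\e)$ with $\int_{Q_R^\e}\psi = 0$ and $\|\psi\|_{L^2(Q_R^\e)} = 1$ (using that $\int \bar p\, \psi = 0$). For each such $\psi$ I would construct an auxiliary field $w \in H^1_0(Q_R^\e; \R^d)$ with $\text{div}(w) = \psi$ in $Q_R^\e$ together with the Bogovskii-type bound
\begin{equation}\label{bog-target}
\|\nabla w\|_{L^2(Q_R^\e)} \le C R \e^{-1}\|\psi\|_{L^2(Q_R^\e)}.
\end{equation}

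Given such $w$, test the equation $-\e^2\mu\Delta u_\e + \nabla p_\e = f$ against $w$ and integrate by parts; since $w \in H^1_0(Q_R^\e;\R^d)$ vanishes on both $\partial Q_R$ and $Q_R \cap \partial(\e\omega)$, all boundary contributions drop and one obtains
$$
\int_{Q_R^\e} p_\e\, \psi = \int_{Q_R^\e} p_\e\, \text{div}(w) = \e^2\mu \int_{Q_R^\e} \nabla u_\e : \nabla w - \int_{Q_R^\e} f \cdot w.
$$
Cauchy-Schwarz, the perforated Poincar\'e inequality \eqref{Poincare} applied to $w$ (giving $\|w\|_{L^2(Q_R^\e)} \le C\e \|\nabla w\|_{L^2(Q_R^\e)}$), and \eqref{bog-target} then yield
$$
\Big|\int_{Q_R^\e} p_\e\, \psi\Big| \le \bigl(\e^2\mu \|\nabla u_\e\|_{L^2(Q_R^\e)} + C\e \|f\|_{L^2(Q_R^\e)}\bigr)\|\nabla w\|_{L^2(Q_R^\e)} \le CR\bigl(\e \|\nabla u_\e\|_{L^2(Q_R^\e)} + \|f\|_{L^2(Q_R^\e)}\bigr),
$$
and taking the supremum over $\psi$ produces \eqref{p-estimate}.

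The main technical obstacle is \eqref{bog-target}: exhibiting a right inverse of the divergence on $Q_R^\e$ with the correct $R\e^{-1}$ scaling. The standard route proceeds in two steps. First, extend $\psi$ by zero to $Q_R$ (still zero mean) and apply the classical Bogovskii construction on the cube to obtain $\widetilde w \in H^1_0(Q_R;\R^d)$ with $\text{div}(\widetilde w) = \psi$ on $Q_R$ and $\|\nabla \widetilde w\|_{L^2(Q_R)} \le C\|\psi\|_{L^2(Q_R^\e)}$; the constant here is $R$-independent by the scale invariance of the cube. Second, apply the Tartar-Allaire periodic restriction operator $T_\e:H^1_0(Q_R;\R^d)\to H^1_0(Q_R^\e;\R^d)$ to modify $\widetilde w$ into a field vanishing on all pores; this operator satisfies $\|\nabla T_\e \widetilde w\|_{L^2(Q_R^\e)} \le C\bigl(\|\nabla \widetilde w\|_{L^2(Q_R)} + \e^{-1}\|\widetilde w\|_{L^2(Q_R)}\bigr)$, and Poincar\'e on $Q_R$ gives $\|\widetilde w\|_{L^2(Q_R)} \le CR\|\nabla \widetilde w\|_{L^2(Q_R)}$, producing the $R\e^{-1}$ factor. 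The restriction operator modifies the divergence by a cell-averaged correction, which is handled either by a short iteration (contracting at scale $\e$) or by a direct construction using the periodic cell problem for the divergence equation on $Y_f$; either approach is classical in the homogenization literature for Stokes in perforated domains (Tartar, Allaire, Mikeli\'c).
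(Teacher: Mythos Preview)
Your argument is correct and coincides with the paper's proof: both rely on a right inverse of the divergence on $Q_R^\varepsilon$ with the bound $\varepsilon\|\nabla v\|_{L^2(Q_R^\varepsilon)} + \|v\|_{L^2(Q_R^\varepsilon)} \le CR\|\psi\|_{L^2(Q_R^\varepsilon)}$ (equivalently your \eqref{bog-target}), followed by testing the Stokes equations against that field. The only cosmetic difference is that the paper bypasses the duality sup and takes $\psi = p_\varepsilon - \fint p_\varepsilon$ directly, which shortens the write-up; it also simply asserts the Bogovski\u{\i}-type bound rather than sketching the restriction-operator construction you outline.
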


\begin{proof}
By rescaling  we may assume $R=1$.
Without loss of generality we may also  assume that $\int_{Q_1^\e} p_\e\, dx =0$.
Choose $v_\e\in H^1_0(Q_1^\e; \R^d)$ such that
$$
\text{\rm div} (v_\e)= p_\e \quad \text{ in } Q_1^\e,
$$
and 
$$
  \| v_\e \|_{L^2(Q_1^\e)} + \e  \| \nabla v_\e\|_{L^2(Q_1^\e)} \le C \| p_\e \|_{L^2(Q_1^\e)},
$$
where $C$ depends only on $d$, $\mu$, and $Y_s$.
By using $v_\e$ as a test function,  we see that
$$
\e^2\mu  \int_{Q_1^\e} \nabla u_\e \cdot \nabla v_\e\, dx
-\int_{Q_1^\e}  |p_\e|^2\, dx =\int_{Q_1^\e} f \cdot v_\e \, dx.
$$
Hence, by the Cauchy inequality,
$$
\aligned
\int_{Q_1^\e} |p_\e|^2\, dx
 & \le \e^2 \mu  \|\nabla u_\e\|_{L^2(Q_1^\e)} \|\nabla v_\e\|_{L^2(Q_1^\e)} + \| f\|_{L^2(Q^\e_1)} \| v_\e\|_{L^2(Q_1^\e)}\\
& \le C   \| p_\e\|_{L^2(Q_1^\e)}
\big\{ \e \|\nabla u_\e\|_{L^2(Q_1^\e)} +  \| f\|_{L^2(Q^\e_1)} \big\},
\endaligned
$$
which  yields  \eqref{p-estimate}.
\end{proof}

\begin{remark}\label{remark-P}
{\rm
Let $(u_\e, p_\e)$ be a weak solution of \eqref{Stokes} in $Q_R^\e$.
We extend $u_\e$ to $Q_R$ by zero and denote the extension still by $u_\e$.
For the pressure $p_\e$, we use  $P_\e$ to denote its extension defined by 
\begin{equation}\label{P}
P_\e (x)=
\left\{
\aligned
 &p_\e (x) & \quad & \text{ if } x\in Q_R^\e, \\
 & \fint_{\e (Y_f+z_k)} p_\e
 & \quad & \text{ if } x\in \e (Y_s +z_k) \text{ and } \e (Y + z_k) \subset Q_R \text{ for some } z_k \in \mathbb{Z}^d.
 \endaligned
 \right.
 \end{equation}
 See  \cite{Tartar-80, LA-1990, Allaire-1997}.
 Note that if $\e (Y +z_k)\subset Q_R$ for some $z_k \in \mathbb{Z}^d$, then 
 $$
 \fint_{\e (Y+z_k)} P_\e = \fint_{\e (Y_f +z_k)} p_\e.
 $$
 It follows that if $R \in \e \N$,
 \begin{equation}\label{P-1}
 \fint_{Q_R} P_\e =\fint_{Q_R^\e} p_\e.
 \end{equation}
}
\end{remark}

The next lemma provides a Caccioppoli type inequality for \eqref{Stokes} in perforated domains.

\begin{lemma}\label{Ca-lemma}
Let $(u_\e, p_\e)$ be a weak solution of \eqref{Stokes} 
in  $Q_{R+\e}^\e$ with $u_\e=0$ on $Q_{R+\e}\cap \partial (\e\omega)$, where $0< \e\le 1$ and $R\in \e \N$.
Then
\begin{equation}\label{Ca}
\e^2 \int_{Q_R^\e} |\nabla u_\e|^2\, dx 
+ R^{-2} \int_{Q_R^\e} |p_\e -\fint_{Q_R^\e} p_\e|^2\, dx
\le C \int_{Q^\e_{R+\e}} |u_\e|^2\, dx + C \int_{Q^\e_{R+\e}} |f|^2\, dx,
\end{equation}
where $C$ depends only on $d$, $\mu$, and $Y_s$.
\end{lemma}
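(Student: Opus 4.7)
The plan is to first establish the gradient estimate
$\e^2\int_{Q_R^\e}|\nabla u_\e|^2 \le C\int_{Q_{R+\e}^\e}|u_\e|^2 + C\int_{Q_{R+\e}^\e}|f|^2$;
the pressure part of \eqref{Ca} will then follow immediately from Lemma \ref{p-estimate} applied to $Q_R^\e$, since squaring \eqref{p-estimate} and dividing by $R^2$ produces
$R^{-2}\|p_\e-\fint_{Q_R^\e}p_\e\|_{L^2(Q_R^\e)}^2 \le C\bigl(\e^2\|\nabla u_\e\|^2_{L^2(Q_R^\e)}+\|f\|^2_{L^2(Q_R^\e)}\bigr)$,
into which the gradient bound can be inserted.

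For the gradient estimate I plan a cover-and-rescale argument based on the periodic cell. For each $m\in\mathbb{Z}^d$ with $\e(Y+m)\subset Q_R$, set $Y_m^+:=m+(-1,2)^d$, so that $\e Y_m^+\subset Q_{R+\e}$, and rescale via $v(y)=u_\e(\e y)$ and $\hat p(y)=\e^{-1}p_\e(\e y)$. Then $(v,\hat p)$ satisfies the unit-scale Stokes system $-\mu\Delta v+\nabla\hat p = g$, $\text{\rm div}(v)=0$ on $Y_m^+\cap\omega$ with $v=0$ on $\partial\omega\cap Y_m^+$, where $g(y)=f(\e y)$. On this fixed unit-scale perforated cell I would establish the interior Caccioppoli inequality
\[
\int_{(Y+m)\cap\omega}|\nabla v|^2\,dy \le C\int_{Y_m^+\cap\omega}|v|^2\,dy + C\int_{Y_m^+}|g|^2\,dy
\]
by the classical test-function argument: test the equation with the divergence-free function $\varphi^2 v-w$, where $\varphi\in C_0^\infty(Y_m^+)$ is a smooth cutoff with $\varphi\equiv 1$ on $Y+m$ and $|\nabla\varphi|+|\Delta\varphi|\le C$, and $w\in H_0^1(Y_m^+\cap\omega;\R^d)$ is the Bogovskii corrector solving $\text{\rm div}(w)=2\varphi\nabla\varphi\cdot v$ with the uniform bound $\|w\|_{L^2}+\|\nabla w\|_{L^2}\le C\|v\|_{L^2(Y_m^+\cap\omega)}$ (the zero-integral condition on the source is automatic from $\text{\rm div}(v)=0$ and the boundary conditions). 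The pressure term disappears because $\text{\rm div}(\varphi^2 v-w)=0$, and after expanding $\int\nabla v:\nabla(\varphi^2 v)$ and applying Cauchy--Schwarz and Young's inequality one arrives at a weak Caccioppoli with a leading $\theta\int|\nabla v|^2$ term ($\theta<1$) over a nested family of cubes; the standard absorption (iteration) lemma in the style of Giaquinta then eliminates this term to yield the clean estimate above.

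Undoing the rescaling produces $\e^2\int_{\e(Y+m)\cap\e\omega}|\nabla u_\e|^2 \le C\int_{\e Y_m^+\cap\e\omega}|u_\e|^2+C\int_{\e Y_m^+}|f|^2$. Summing over all $m\in\mathbb{Z}^d$ with $\e(Y+m)\subset Q_R$---noting that these cells tile $Q_R^\e$ (since $R\in\e\mathbb{N}$), that $\bigcup_m\e Y_m^+\subset Q_{R+\e}$, and that the enlarged cells have bounded overlap multiplicity $3^d$---yields the desired gradient estimate. The principal technical step is the Bogovskii bound on the fixed unit-scale perforated cell $Y_m^+\cap\omega$, whose operator norm depends only on $d$ and the Lipschitz character of $\partial Y_s$; this is where the perforated geometry plays its essential role and is the one place where a careful construction (rather than a routine calculation) is required.
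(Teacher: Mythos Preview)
Your overall strategy matches the paper's: reduce to the gradient bound via the pressure estimate \eqref{p-estimate}, rescale to $\e=1$, establish a unit-scale Caccioppoli inequality cell by cell, and sum. The paper, however, makes a simpler geometric choice at the localization step. Instead of the $3^d$-super-cell $Y_m^+=m+(-1,2)^d$, it exploits the hypothesis $\text{dist}(\partial Y,\partial Y_s)>0$ to pick $\delta\in(0,1/2)$ with $\widetilde{Y_f}:=(1+\delta)Y\setminus\overline{Y_s}\subset\omega$, so that the enlarged cell $\widetilde{Y_f}+z$ contains \emph{exactly one} obstacle and its outer boundary lies entirely in the fluid region. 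On this fixed single-hole domain the standard Caccioppoli inequality for Stokes with a Dirichlet portion of the boundary is simply quoted from \cite{Gia}:
\[
\int_{Y_f+z}|\nabla u|^2\,dx \le C\int_{\widetilde{Y_f}+z}|u|^2\,dx + C\int_{\widetilde{Y_f}+z}|f|^2\,dx,
\]
and the summation over $z$ finishes the proof in one line.

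What this buys: the paper avoids both the Bogovskii construction on a multi-hole domain and the Giaquinta iteration step. Your route is correct but heavier---you must (i) verify a uniform Bogovskii bound on $Y_m^+\cap\omega$ (a cube with $3^d$ Lipschitz holes), and (ii) for the iteration lemma, actually verify the weak Caccioppoli across a continuous family of intermediate cubes $m+(-t,1+t)^d$ whose boundaries may slice through neighboring obstacles, so uniform Bogovskii constants along the family are not automatic. Both issues are resolvable (e.g., restrict the iteration to $t\in(0,\delta)$ so every intermediate cube still has a single hole, which essentially collapses your argument to the paper's), but the paper's choice of $\widetilde{Y_f}$ sidesteps them entirely.
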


\begin{proof}
In view of \eqref{p-estimate}, it suffices to bound the first term in the left-hand side of \eqref{Ca}.
By rescaling we may assume $\e=1$.
Now suppose that  
$$
-\mu \Delta u +\nabla p =f  \quad \text{ and } \quad \text{\rm div}(u) =0
$$
in $Q_{R+1}\cap \omega$, and $u=0$ in $ Q_{R+1}  \cap \partial\omega$ for some $R\in \N$.
Since dist$(\partial Y, \partial Y_s)>0$,
we may choose $\delta\in (0, 1/2) $ so small that 
$$
\widetilde{Y_f}:=(1+\delta) Y\setminus \overline{Y_s} \subset \omega.
$$
It follows from the standard Caccioppoli inequality for the Stokes equations \cite{Gia}  that 
$$
\int_{Y_f+z} |\nabla u|^2\, dx \le C \int_{\widetilde{Y_f} +z} |u|^2\, dx 
+ C \int_{\widetilde{Y_f} +z } |f|^2\, dx, 
$$
where $z\in \mathbb{Z}^d$ and $Y+z\subset Q_R$.
By summing the inequality above over $z$ we obtain  \eqref{Ca}
with $\e=1$.
\end{proof}

\begin{remark}\label{remark-Ca}
{\rm 
Let $(u_\e, p_\e)$ be a weak solution of \eqref{Stokes} in $Q_{2R}^\e$ with $u_\e=0$
on $Q_{2R}^\e \cap \partial (\e \omega)$, where $0< \e\le 1$ and $R\ge 2\e$.
Then
\begin{equation}\label{Ca-1}
\e^2 \int_{Q_R^\e} |\nabla u_\e|^2\, dx 
+ R^{-2} \int_{Q_{R}^\e} |p_\e -\fint_{Q_R^\e} p_\e|^2\, dx
\le C \int_{Q^\e_{2R}} |u_\e|^2\, dx + C \int_{Q^\e_{2R}} |f|^2\, dx.
\end{equation}
To see this, we choose $k \in \N$ such that $R\le k \e \le R + \e$.
The left-hand side of \eqref{Ca-1} is bounded by 
$$
\ \e^2 \int_{Q_{k\e} ^\e} |\nabla u_\e|^2\, dx 
+ CR^{-2}  \int_{Q_{k\e }^\e} |p_\e -\fint_{Q_{k \e} ^\e} p_\e|^2\, dx,
$$
which is bounded by the right-hand side of \eqref{Ca-1}, using \eqref{Ca} and the fact $R\ge 2\e$.
}
\end{remark}


\section{Reverse H\"older inequalities}\label{section-3}

Let   $Q(x, r)= x+ (-r, r)^d=x+ Q_r$ and $Q^\e (x, r)=Q(x, r)\cap \e \omega$. Define
\begin{equation}\label{g-e}
g_\e (x) =  \left( \fint_{Q (x, \e) } ( \e |\nabla u_\e| + |u_\e|) ^2 \right)^{1/2}.
\end{equation}
The goal of this section is to establish the following.

\begin{thm}\label{RH-thm}
Let $(u_\e, p_\e)\in H^1(Q_{2R}^\e; \R^d) \times L^2(Q_{2R}^\e)$ be a weak solution of \eqref{Stokes} in $Q^\e_{2R}$ with $u_\e=0$ on $Q_{2R} \cap \partial (\e \omega)$,
where $0< \e\le 1$ and $R \ge \e$.
Let $g_\e$ be defined by \eqref{g-e}.
Then 
\begin{equation}\label{RH-1}
\left(\fint_{Q_R} |g_\e|^q\right)^{1/q}
\le C \left(\fint_{Q_{2R} } (\e |\nabla u_\e| + |u_\e|)^2 \right)^{1/2}
+ C \left( \fint_{Q_{2R}} |f|^q \right)^{1/q},
\end{equation}
where $q>2$ and $C>0$ depend only on $d$, $\mu$, and $Y_s$.
\end{thm}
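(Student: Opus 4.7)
The plan is to prove the theorem via a Meyers--Gehring self-improvement scheme applied to the quantity $h := \e |\nabla u_\e| + |u_\e|$, followed by transfer of the resulting $L^q$ estimate to the $\e$-smoothed function $g_\e$.

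First, I would establish a weak reverse H\"older inequality of the form
$$
\fint_{Q(x_0, r)} h^2 \le C \left(\fint_{Q(x_0, 2r)} h^{s}\right)^{2/s} + C\fint_{Q(x_0, 2r)} |f|^2
$$
for some $s < 2$, uniformly for cubes with $r \ge \e$ and $Q(x_0, 2r) \subset Q_{2R}$. The Caccioppoli-type estimate of Lemma~\ref{Ca-lemma} and Remark~\ref{remark-Ca}, which exploits the Dirichlet condition on $\partial(\e \omega)$ to yield $\e^2 \fint_{Q_r^\e} |\nabla u_\e|^2 \le C \fint_{Q_{2r}^\e} |u_\e|^2 + C \fint |f|^2$ without the classical $r^{-2}$ factor, supplies the upper bound on $\e|\nabla u_\e|$. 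To control $\fint_{Q_{2r}^\e} |u_\e|^2$ by a lower-order $L^s$ norm of $\e |\nabla u_\e|$, I would apply a cell-based Sobolev--Poincar\'e inequality: the scale-invariant cell inequality $\|v\|_{L^2(Y_f)} \le C \|\nabla v\|_{L^{s_0}(Y_f)}$ with $s_0 = 2d/(d+2)$, applied in each cell $\e(Y_f + z) \subset Q_{2r}$ and summed via $\ell^{2/s_0}$-subadditivity, delivers the required bound. The pressure contribution (which could otherwise spoil the scaling) is handled either through the estimate \eqref{p-estimate} or, more cleanly, by testing the Stokes system against a solenoidal correction of the cutoff of $u_\e$ constructed using Bogovskii's operator.

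Second, Gehring's self-improving lemma applied to the weak reverse H\"older inequality produces an exponent $q > 2$ and the estimate
$$
\left(\fint_{Q_R} h^q\right)^{1/q} \le C \left(\fint_{Q_{2R}} h^2\right)^{1/2} + C\left(\fint_{Q_{2R}} |f|^q\right)^{1/q},
$$
with both $q$ and $C$ depending only on $d$, $\mu$, and $Y_s$. Finally, since $g_\e(x)^2 = \fint_{Q(x,\e)} h^2$, an application of Jensen's inequality (using $q/2 \ge 1$) and Fubini's theorem gives $\int_{Q_R} g_\e^q\, dx \le C \int_{Q_{R+\e}} h^q\, dx$, and combining with the previous $L^q$ bound for $h$ yields \eqref{RH-1}.

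The principal obstacle I anticipate is establishing the scale-invariant weak reverse H\"older inequality in the first step. The competition between the microscopic scale $\e$ and the macroscopic scale $r$ of the cube means that a careless combination of Caccioppoli and the cell Sobolev--Poincar\'e inequality in normalized form introduces a spurious $(r/\e)^2$ factor that defeats Gehring; matching these scales correctly, and in particular keeping the pressure term under control so it does not reintroduce scale-dependent constants, is the delicate heart of the argument.
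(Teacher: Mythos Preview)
Your proposal has a genuine gap at precisely the point you flag as the ``principal obstacle,'' and the methods you list do not resolve it. Combining the Caccioppoli estimate \eqref{Ca-1} with a cell-by-cell Sobolev--Poincar\'e inequality necessarily produces
\[
\fint_{Q_{2r}} |u_\e|^2 \le C\Big(\frac{r}{\e}\Big)^{2}\left(\fint_{Q_{2r}} (\e|\nabla u_\e|)^{s_0}\right)^{2/s_0},
\]
because the cellwise inequality $\|u_\e\|_{L^2(\e(Y_f+z))}\le C\|\e\nabla u_\e\|_{L^{s_0}(\e(Y_f+z))}$ is sharp cell-by-cell, and the $\ell^{2/s_0}$ summation over $\sim(r/\e)^d$ cells introduces exactly this factor (equivalently: Sobolev--Poincar\'e on $Q_{2r}$ itself gives the constant $r$, not $\e$). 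Since you cannot subtract a mean from $u_\e$ (it must vanish on $\partial(\e\omega)$), there is no cancellation mechanism to remove the $(r/\e)^2$, and neither Bogovski\u{\i} correctors nor the pressure estimate \eqref{p-estimate} touch this issue. So the weak reverse H\"older inequality you need is not obtainable by the route you describe.

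The paper takes a completely different path to the reverse H\"older step. It picks $t\in[R,2R]$ with $\partial Q_t$ lying in the fluid region, solves the Stokes system in the \emph{unperforated} cube $Q_t$ with boundary data $u_\e|_{\partial Q_t}$, and uses two external ingredients: (i) an energy comparison from \cite{Shen-1} giving $\e\|\nabla u_\e\|_{L^2(Q_t)}+\|u_\e\|_{L^2(Q_t)}\le C\big(\e\|\nabla v\|_{L^2(Q_t)}+\|v\|_{L^2(Q_t)}+\|f\|_{L^2}\big)$, and (ii) the nontangential-maximal-function estimates of \cite{FKV} for Stokes in Lipschitz domains, which control $\|v\|_{L^2(Q_t)}$ and $\|\nabla v\|_{L^2(Q_t)}$ by $L^{q_0}$ boundary norms with $q_0<2$. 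A Fubini argument then converts the boundary norms to volume norms. This bypasses the $(r/\e)^2$ obstruction entirely. There is also a second subtlety you glossed over: the resulting reverse H\"older inequality holds only for $r\gtrsim\e$, so standard Gehring does not apply directly; the paper handles this by running Gehring on an auxiliary restricted maximal function $G_\e$ (built from cubes of side $\ge 2\e$) that is essentially constant below scale $\e$, rather than on $h$ itself.
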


We begin with an estimate for the Stokes equations in $Q_t =(-t, t)^d$.

\begin{lemma}\label{lemma-H-1}
Let $(v, \tau)\in H^1(Q_t; \R^d) \times L^2(Q_t)$ be a weak solution of the Dirichlet problem,
\begin{equation}\label{S-1}
\left\{
\aligned
-\Delta v +\nabla \tau & =0 & \quad & \text{ in } Q_t,\\
\text{\rm div} (v)& =0 & \quad & \text{ in } Q_t,\\
v & =h & \quad & \text{ on } \partial Q_t,
\endaligned
\right.
\end{equation}
for some $t>0$, where $h \in H^1(\partial Q_t; \R^d)$ satisfies the compatibility condition
 $
 \int_{\partial Q_t} h \cdot n \, d\sigma =0.
 $
 Then there exist $q_0\in (1, 2)$ and $C>0$, depending only on $d$, such that
\begin{equation}\label{S-2}
\left(\fint_{Q_t} |v|^2 \right)^{1/2}
\le C \left(\fint_{\partial Q_t } |h|^{q_0}  \right)^{1/q_0},
\end{equation}
and
\begin{equation}\label{S-3}
\left(\fint_{Q_t} |\nabla v |^2 \right)^{1/2}
\le C \left(\fint_{\partial Q_t } |\nabla_{\tan} h|^{q_0}  \right)^{1/q_0}.
\end{equation}
\end{lemma}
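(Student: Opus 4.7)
The plan is to rescale to $t = 1$. Setting $\tilde v(x) = v(tx)$, $\tilde\tau(x) = t\tau(tx)$, and $\tilde h(x) = h(tx)$, the Stokes system is preserved on the Lipschitz cube $Q_1$, and both inequalities \eqref{S-2} and \eqref{S-3} are scale-invariant because all norms involved are averaged. After this reduction, the lemma becomes a statement about the Dirichlet and regularity problems for the Stokes system on the fixed Lipschitz domain $Q_1$, with no further role played by $t$.

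For \eqref{S-2}, I would begin with the classical $L^2$ nontangential maximal function estimate of Fabes--Kenig--Verchota for the Stokes Dirichlet problem, which gives $\|N(v)\|_{L^2(\partial Q_1)} \le C\|h\|_{L^2(\partial Q_1)}$. Integrating the pointwise cone bound $|v(x)| \le N(v)(y)$ (valid for any $y\in\partial Q_1$ whose nontangential cone contains $x$) in the transverse direction yields $\|v\|_{L^2(Q_1)} \le C\|N(v)\|_{L^2(\partial Q_1)}$, which is \eqref{S-2} with $q_0=2$. The improvement to some $q_0\in(1,2)$ is where the real work lies: one natural option is to combine the Stokes Caccioppoli inequality up to the Lipschitz boundary with a self-improving (Gehring/Shen-style) real-variable argument to upgrade the $L^2$ boundary estimate to an $L^{q_0}$ estimate for some $q_0<2$. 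An equivalent route is to exploit H\"older regularity of the Stokes Poisson kernel on the Lipschitz cube, of the form $|P(x,y)|\le C\,\text{\rm dist}(x,\partial Q_1)^{\alpha}\,|x-y|^{-(d-1+\alpha)}$ for some $\alpha>0$, and apply a Hardy--Littlewood--Sobolev-type bound to the Poisson integral $v(x)=\int_{\partial Q_1}P(x,y)h(y)\,d\sigma(y)$; this yields a genuine gain of integrability in passing from the $(d-1)$-dimensional boundary to the $d$-dimensional interior.

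For \eqref{S-3} the same strategy applies to the regularity problem: the FKV estimate $\|N(\nabla v)\|_{L^2(\partial Q_1)}\le C\|\nabla_{\tan}h\|_{L^2(\partial Q_1)}$, combined with transverse integration and the same extrapolation/HLS improvement, produces the desired $L^{q_0}$ bound in terms of $\nabla_{\tan}h$. The main technical obstacle is the extrapolation step itself — verifying that the Stokes system on the Lipschitz cube $Q_1$ satisfies the boundary weak reverse H\"older inequalities in the form demanded by the real-variable framework, with an admissible exponent strictly less than $2$. Once this is in place, the cone integration and the initial rescaling are routine, and the two claimed bounds follow at once.
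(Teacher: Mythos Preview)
Your reduction to $t=1$ matches the paper, and your instinct to invoke the Fabes--Kenig--Verchota nontangential estimates for \eqref{S-2} is on target. But the paper's proof is considerably more direct than what you outline, and your ``main technical obstacle'' --- the extrapolation from $q_0=2$ to $q_0<2$ --- turns out to be entirely avoidable.

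For \eqref{S-3} the paper does not use FKV at all. It simply invokes the energy estimate $\|\nabla v\|_{L^2(Q_1)}\le C\|h\|_{H^{1/2}(\partial Q_1)}$ and then the Sobolev embedding $W^{1,q_0}(\partial Q_1)\hookrightarrow H^{1/2}(\partial Q_1)$ on the $(d-1)$-dimensional boundary, valid for any $q_0>\frac{2(d-1)}{d}$. Subtracting the mean of $h$ and applying Poincar\'e on $\partial Q_1$ then gives \eqref{S-3}. No regularity-problem estimate or extrapolation is needed.

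For \eqref{S-2} the paper also avoids extrapolation: FKV's theorem already yields $\|(v)^*\|_{L^{q_0}(\partial Q_1)}\le C\|h\|_{L^{q_0}(\partial Q_1)}$ for a full range $|q_0-2|<\sigma$, so one can simply take $q_0$ slightly below $2$ from the outset. The passage from $\|(v)^*\|_{L^{q_0}(\partial Q_1)}$ to $\|v\|_{L^2(Q_1)}$ is then handled by the pointwise bound $|v(x)|\le C\int_{\partial Q_1}(v)^*(y)\,|x-y|^{1-d}\,d\sigma(y)$ together with a short duality argument using the trace inequality $\|G\|_{L^{q_0'}(\partial Q_1)}\le C\|G\|_{H^1(Q_1)}$ for the Riesz potential of the test function. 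This is morally the HLS-type gain you mention, but packaged through trace theory rather than Poisson-kernel regularity.

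In short, your route could likely be pushed through, but the paper sidesteps both the Gehring-type self-improvement and the Poisson-kernel analysis by (i) recognizing that the Sobolev embedding on $\partial Q_1$ already buys the exponent gain for \eqref{S-3}, and (ii) quoting FKV directly at $q_0<2$ for \eqref{S-2}.
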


\begin{proof}
By dilation we may assume $t=1$.
To prove \eqref{S-3}, we use the energy estimates to obtain 
$$
\| \nabla v \|_{L^2(Q_1)}
\le C \| h\|_{H^{1/2}(\partial Q_1)} \le C \| h \|_{W^{1, q_0}(\partial Q_1)},
$$
where $\frac{2 (d-1)}{d} < q_0<2$, and we have used the Sobolev imbedding on $\partial Q_1$ for the last inequality.
Replacing $v$ be $v-E$, with $E=\fint_{\partial Q_1} h$, we obtain \eqref{S-3} by a
Poincar\'e  inequality on $\partial Q_1$.

To see \eqref{S-2}, we use the nontangential-maximal-function estimate,
\begin{equation}\label{max}
\| (v)^*\|_{L^{q_0} (\partial Q_1)} \le C \| h \|_{L^{q_0}  (\partial Q_1)}.
\end{equation}
The estimate \eqref{max} was proved in \cite{FKV} for the Stokes equations in bounded Lipschitz domains $\Omega$, where  
$|q_0-2|<\sigma$ and $\sigma>0$ depends only on $d$ and the Lipschitz characters of $\Omega$.
As a result, \eqref{max} holds for some $\frac{2(d-1)}{d} < q_0<2$, depending only on $d$.
This, together with the estimate,
\begin{equation}\label{max-1}
\| v\|_{L^2(Q_1)} \le C \| (v)^*\|_{L^{q_0} (\partial Q_1)},
\end{equation}
gives \eqref{S-2}.

Finally, to see \ref{max-1}, we use the observation 
$$
|v(x)| \le C \int_{\partial Q_1} \frac{ (v)^*(y)}{|x-y|^{d-1}} \, d\sigma (y)
$$
for any $x\in Q_1$. It follows that
$$
\Big| \int_{Q_1} v(x) g(x)\, dx \Big|
\le C \int_{\partial Q_1}  (v)^* (y) G(y) \, d\sigma (y),
$$
where
$$
G(y) =\int_{Q_1} \frac{|g(x)|}{|x-y|^{d-1}} \, dx.
$$
Since
$$
\| G\|_{L^{q_0^\prime}(\partial Q_1)}
\le C \| G\|_{H^{1/2}(\partial Q_1)}
\le C \| G\|_{H^1(Q_1)}
\le C \| g \|_{L^2(Q_1)},
$$
we obtain \eqref{max-1} by a duality argument.
\end{proof}

In the proof of the next lemma, we will use the following observation: there exists $c_0>0$, depending only on $d$ and
$Y_s$, such that 
\begin{equation}\label{o-1}
\text{\rm dist} (\partial Q_t, \R^d \setminus \e \omega)\ge   c_0\e  \quad \text{ if }\ \ 
\text{\rm dist} (t, \e \N)\le  c_0 \e.
\end{equation}
The case $\e=1$ follows from the assumption that dist$(\partial Y, \partial Y_s)>0$, while the general case follows by dilation.

\begin{lemma}\label{lemma-H-2}
Let $(u_\e, p_\e)\in H^1(Q_{2R}^\e; \R^d) \times L^2(Q_{2R}^\e)$ be a weak solution of \eqref{Stokes} in $Q_{2R}^\e$ with
$u_\e=0$ in $Q_{2R}\cap \partial (\e \omega)$, where $0< \e\le 1$ and $R\in \e \N$.
Then
\begin{equation}\label{RH-3}
\aligned
 & \e \left(\fint_{Q_R} |\nabla u_\e|^2 \right)^{1/2}
+ \left(\fint_{Q_R} |u_\e|^2\right)^{1/2}\\
& \le C \e \left(\fint_{Q_{2R}} |\nabla u_\e|^{q_0}  \right)^{1/q_0}
+ C \left(\fint_{Q_{2R}} |u_\e|^{q_0} \right)^{1/q_0} + C \left(\fint_{Q_{2R}} |f|^2 \right)^{1/2},
\endaligned
\end{equation}
where $q_0\in (1, 2)$ is given by Lemma \ref{lemma-H-1}, and $C$ depends only on $d$, $\mu$,  and $Y_s$.
\end{lemma}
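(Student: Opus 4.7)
The plan is to run a Meyers-type splitting on a cube $Q_t$ of intermediate scale chosen so that its outer boundary avoids the perforation, then apply the unperforated boundary estimate of Lemma \ref{lemma-H-1} to one piece and an energy estimate on the perforated domain to the other.

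After the rescaling $v(x)=u_\e(rx)$ described before Lemma \ref{p-estimate}, I may assume $\e=1$ and $R\in\N$. For every $t\in[R,R+c_0]$, observation \eqref{o-1} gives $\text{\rm dist}(\partial Q_t, \R^d\setminus\omega)\ge c_0$, so $\partial Q_t$ lies entirely in the fluid and $u$ admits a well-defined trace on $\partial Q_t$. The compatibility condition $\int_{\partial Q_t}u\cdot n\,d\sigma=0$ follows by integrating $\text{\rm div}(u)=0$ over $Q_t\cap\omega$ and using $u=0$ on $\partial\omega$. Let $v=v_t$ solve the homogeneous Stokes Dirichlet problem on the unperforated cube
\[
-\Delta v+\nabla\tau=0,\quad \text{\rm div}(v)=0\ \text{ in }Q_t,\quad v=u\ \text{ on }\partial Q_t.
\]
Lemma \ref{lemma-H-1} then yields
\[
\| v\|_{L^2(Q_t)}+\|\nabla v\|_{L^2(Q_t)}\le C\bigl(\|u\|_{L^{q_0}(\partial Q_t)}+\|\nabla_{\tan}u\|_{L^{q_0}(\partial Q_t)}\bigr).
\]

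Let $\tilde u$ denote the zero-extension of $u$ to $Q_t$; since $u\in H^1$ vanishes on $\partial\omega\cap Q_t$, we have $\tilde u\in H^1(Q_t)$. Set $w:=\tilde u-v$, so $w\in H^1(Q_t)$ with $w=0$ on $\partial Q_t$. On the fluid $Q_t\cap\omega$ the function $w=u-v$ solves $-\mu\Delta w+\nabla(p-\mu\tau)=f$, $\text{\rm div}(w)=0$, with the nontrivial Dirichlet datum $w=-v$ on $\partial\omega\cap Q_t$. To absorb that datum, I construct a divergence-free lifting $\phi\in H^1(Q_t\cap\omega;\R^d)$ with $\phi=-v$ on $\partial\omega\cap Q_t$, $\phi=0$ on $\partial Q_t$, and $\|\phi\|_{H^1(Q_t\cap\omega)}\le C\|v\|_{H^1(Q_t)}$. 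This is obtained by taking a cutoff of $-v$ supported in a unit-scale neighbourhood of the solid parts and correcting the divergence with the Bogovskii operator applied cell by cell in $\omega$; the uniform control of this operator comes from the Lipschitz and connectedness assumptions on $Y_f$. Then $w-\phi\in H^1_0(Q_t\cap\omega)$ satisfies an inhomogeneous Stokes system whose source is dominated by $\|f\|_{L^2(Q_{2R})}+\|v\|_{H^1(Q_t)}$, and the standard energy estimate together with Poincar\'e's inequality \eqref{Poincare} yields
\[
\|w\|_{L^2(Q_t\cap\omega)}+\|\nabla w\|_{L^2(Q_t\cap\omega)}\le C\|f\|_{L^2(Q_{2R})}+C\|v\|_{H^1(Q_t)}.
\]

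Combining the two estimates, and using $Q_R\subset Q_t$ together with $R\le t\le 2R$, gives
\[
\|u\|_{L^2(Q_R)}+\|\nabla u\|_{L^2(Q_R\cap\omega)}\le C\|u\|_{L^{q_0}(\partial Q_t)}+C\|\nabla_{\tan}u\|_{L^{q_0}(\partial Q_t)}+C\|f\|_{L^2(Q_{2R})}.
\]
Raising to the $q_0$-th power and averaging in $t\in[R,R+c_0]$, the co-area formula turns each boundary norm on $\partial Q_t$ into a volume norm on $Q_{R+c_0}\setminus Q_R\subset Q_{2R}$:
\[
\int_R^{R+c_0}\|u\|_{L^{q_0}(\partial Q_t)}^{q_0}\,dt\le\int_{Q_{2R}}|u|^{q_0}\,dx,
\]
and similarly for $|\nabla u|^{q_0}$. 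Dividing by the appropriate power of $R$ to pass to averages and then undoing the rescaling produces \eqref{RH-3}.

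The main obstacle is the construction of the divergence-free lifting $\phi$ on the perforated domain with an $H^1$ bound that is uniform in $R$; this is what forces the cell-by-cell Bogovskii correction and is where the geometric hypotheses on $Y_s$ are essential. Everything else reduces to the boundary estimate already established in Lemma \ref{lemma-H-1} and to routine energy and co-area manipulations.
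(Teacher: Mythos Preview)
Your overall strategy matches the paper's: compare $u$ to the unperforated Stokes extension $v$ sharing the same trace on $\partial Q_t$, control $v$ by Lemma~\ref{lemma-H-1}, and control $u-v$ by an energy estimate on the perforated domain (your lifting-plus-Bogovskii argument is essentially a re-derivation of the inequality from \cite{Shen-1} that the paper simply cites). However, your choice of rescaling introduces a scaling error that makes the final constant depend on $R$. You set $\e=1$, so $t\in[R,R+c_0]$ with $R$ potentially large. Lemma~\ref{lemma-H-1} is stated in averaged form; in norm form it reads
\[
\|v\|_{L^2(Q_t)}\le C\,t^{\,d/2-(d-1)/q_0}\,\|u\|_{L^{q_0}(\partial Q_t)},
\qquad
\|\nabla v\|_{L^2(Q_t)}\le C\,t^{\,d/2-(d-1)/q_0}\,\|\nabla_{\tan} u\|_{L^{q_0}(\partial Q_t)},
\]
and the exponent $d/2-(d-1)/q_0$ is strictly positive because $q_0>2(d-1)/d$. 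Your displayed bound $\|v\|_{L^2(Q_t)}+\|\nabla v\|_{L^2(Q_t)}\le C(\|u\|_{L^{q_0}(\partial Q_t)}+\|\nabla_{\tan}u\|_{L^{q_0}(\partial Q_t)})$ drops this factor of $t\sim R$. Averaging over the interval $[R,R+c_0]$ of fixed length $c_0$ then only yields $\|u\|_{L^{q_0}(\partial Q_t)}\le C\|u\|_{L^{q_0}(Q_{2R})}$ for a good $t$, and when you unpack the averages in \eqref{RH-3} you are left with an extra factor $R^{1/q_0}$.

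The repair is simple: either rescale to $R=1$ as the paper does (then $t\in[1,2]$ and the powers of $t$ are harmless), or keep $\e=1$ but pigeonhole over the full good set $\{t\in(R,2R):\text{dist}(t,\N)\le c_0\}$, which has measure comparable to $R$ and therefore supplies the missing factor $R^{-1/q_0}$ in the boundary-to-volume conversion. With either fix your argument goes through and coincides with the paper's.
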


\begin{proof}
By dilation we may assume $R=1$ and $\e^{-1} \in \N$.
We first observe that by Fubini's Theorem, there exists $t\in [1, 2] $ such that
dist$(t, \e\N)\le c_0 \e$ and
\begin{equation}\label{H-21}
\e^{q_0}  \int_{\partial Q_t} |\nabla u_\e|^{q_0}\, d\sigma 
+ \int_{\partial Q_t } |u_\e|^{q_0} \, d\sigma 
\le C_0 \left\{ \e^{q_0}  \int_{Q_2} |\nabla u_\e|^{q_0}\, dx
+   \int_{ Q_2} |u_\e|^{q_0} \, dx  \right\} ,
\end{equation}
where $C_0$ depends on $d$ and $Y_s$.
For otherwise, suppose that for any $t\in [1, 2]$ with dist$(t, \e\N)\le c_0 \e$, 
$$
\e^{q_0}  \int_{\partial Q_t} |\nabla u_\e|^{q_0}\, d\sigma 
+ \int_{\partial Q_t } |u_\e|^{q_0}\, d\sigma 
> C_0\left\{  \e^{q_0}  \int_{Q_2} |\nabla u_\e|^{q_0}\, dx
+   \int_{ Q_2} |u_\e|^{q_0}\, dx  \right\} .
$$
By integrating the inequality above with respect to $t$ over the set
$$
E_\e  = \big\{ t \in (1, 2): \text{\rm dist} (t, \e \N)\le  c_0\e \big\} ,
$$
and using  the observation that $|E_\e|\ge c>0$,
 we obtain 
$$
\e^{q_0}  \int_{Q_2\setminus Q_1} |\nabla u_\e|^{q_0}\, dx
+ \int_{ Q_2\setminus Q_1 } |u_\e|^{q_0}\, dx
\ge  C_1C_0 \left\{  \e^{q_0}  \int_{Q_2} |\nabla u_\e|^{q_0} \, dx
+  \int_{ Q_2} |u_\e|^{q_0} \, dx  \right\} , 
$$
where $C_1$ depends only on $d$ and $c_0$. This gives a contradiction if we choose $C_0=(2C_1)^{-1}$.

Next, let $(v, \tau )$ be a weak solution of \eqref{S-1}  in $Q_t$ with Dirichlet data $h= u_\e$ on $\partial Q_t$.
Since dist$(\partial Q_t, \R^d\setminus \e \omega)\ge  c_0\e$,
by  the energy estimates for the Stokes equations in periodically perforated domains 
in \cite[Inequality (3.9)]{Shen-1}, we deduce that
$$
\e^2 \int_{Q_t } |\nabla u_\e|^2\, dx
+\int_{Q_t} |u_\e|^2\, dx
\le C\left\{  \e^2 \int_{Q_t} |\nabla v|^2\, dx
+\int_{Q_t} |v|^2\, dx + \int_{Q_t} |f|^2\, dx\right\}. 
$$
This, together with \eqref{S-2} and \eqref{H-21}, gives
$$
\aligned
\e \| \nabla u_\e \|_{L^2(Q_1)}
+ \| u_\e \|_{L^2(Q_1)}
& \le C \Big\{ \e \| \nabla v  \|_{L^2(Q_t)}
+ \| v  \|_{L^2(Q_t)} + \| f\|_{L^2(Q_t)}  \Big\}\\
&\le C \Big\{ \e \|\nabla_{\tan} u_\e \|_{L^{q_0}(\partial Q_t)}
+ \| u_\e\|_{L^{q_0} (\partial Q_t)} +  \| f\|_{L^2(Q_t)}  \Big\}\\
& \le C \Big\{
\e \| \nabla u_\e \|_{L^{q_0} (Q_2)} + \| u_\e\|_{L^{q_0} (Q_2)}+  \| f\|_{L^2(Q_2)}  \Big\},
\endaligned
$$
which completes the proof.
\end{proof}

\begin{remark}
{\rm 
Let $(u_\e, p_\e)$ be a weak solution of \eqref{Stokes} in $Q^\e(x_0, 4R)$
with $u_\e=0$ in $Q(x_0, 4R)\cap \partial(\e \omega)$,
where $x_0\in \R^d$, $0<\e \le 1$ and $ R\ge   2\e $.
Then 
\begin{equation}\label{RH-3-0}
\aligned
 & \e \left(\fint_{Q(x_0, R)} |\nabla u_\e|^2 \right)^{1/2}
+ \left(\fint_{Q (x_0, R)} |u_\e|^2\right)^{1/2}\\
& \le C \e \left(\fint_{Q(x_0, 4R)} |\nabla u_\e|^{q_0}  \right)^{1/q_0}
+ C \left(\fint_{Q(x_0, 4R) } |u_\e|^{q_0} \right)^{1/q_0} + C \left(\fint_{Q(x_0, 4R) } |f|^2 \right)^{1/2},
\endaligned
\end{equation}
where $q_0\in (1, 2)$ is given by Lemma \ref{lemma-H-2}.
Indeed,  by \eqref{RH-3} and translation, \eqref{RH-3-0} holds if $x_0\in \e\mathbb{Z}^d$ and
$R\in \e\N$.
Moreover, in this case, $Q (x_0, 4R)$ in the right-hand side is replaced by $Q(x_0, 2R)$.
For the general case, we choose $y_0\in \e\mathbb{Z}^d$ and $R_1\in \e \N$ such that
$$
Q (x_0, R) \subset Q (y_0, R_1) \quad 
\text{ and } \quad Q (y_0, 2R_1) \subset Q (x_0, 4R),
$$
which  is possible under the assumption $R\ge 2\e$.
}
\end{remark}

\begin{proof}[\bf Proof of Theorem \ref{RH-thm}]

By rescaling  we may assume $R=1$ and $0< \e\le 1$. 
We  also assume $0<\e< c$, where $c>0$ is sufficiently small; the case $c\le \e\le 1$ is trivial.

Let $q_0\in (1, 2)$ be given by Lemma \ref{lemma-H-2}. Define
\begin{equation}\label{G-e}
G_\e (y) =\sup
\left(\fint_{Q (z, r)} \big( \e |\nabla u_\e|+|u_\e|\big)^{q_0} \right)^{1/q_0},
\end{equation}
where the supremum is taken over all $Q (z, r)$ with the properties that
$y\in Q (z, r)$, $r\ge 2\e$, and $Q(z, r)\subset Q_2$.
We will show that
\begin{equation}\label{RH-41}
\left(\fint_{Q_1} |G_\e|^q\right)^{1/q}
\le C \left(\fint_{Q_2} |G_\e|^2 \right)^{1/2}
+ C \left(\fint_{Q_2} |f|^q\right)^{1/q}
\end{equation}
for some $q>2$, depending only on $d$, $\mu$,  and $Y_s$.
Note that by the $L^{2/q_0}$ boundedness of the Hardy-Littlewood maximal operator,
$$
\left(\fint_{Q_2} |G_\e|^2\right)^{1/2}
\le C \left(\fint_{Q_2} (\e |\nabla u_\e| +| u_\e|)^2 \right)^{1/2}.
$$
Also, observe that by \eqref{RH-3-0},
$$
\left(\fint_{Q (x, 2\e)} (\e |\nabla u_\e| +|u_\e|)^2 \right)^{1/2}
\le C G_\e (x)
+ C \left(\fint_{Q(x, 8\e)} |f|^2\right)^{1/2}
$$
for $x\in Q_1$.
It follows that
$$
\left(\fint_{Q_1} |g_\e|^q\right)^{1/q}
\le C \left(\fint_{Q_1} |G_\e|^q \right)^{1/q}
+ C \left(\fint_{Q_2} |f|^q\right)^{1/q}.
$$
As a result, the estimate \eqref{RH-1} follows from \eqref{RH-41}.

Finally, to prove \eqref{RH-41},
we use the well-known self-improving property of (weak) reverse H\"older inequalities \cite{Gia-book}. 
Consequently, 
it suffices to show that
\begin{equation}\label{RH-10}
\left(\fint_{Q(x, t)} |G_\e|^2 \right)^{1/2}
\le C \left(\fint_{Q(x, 8t)} |G_\e |^{q_0}\right)^{1/q_0}
+ C \left(\fint_{Q(x, 8t)} |f|^2 \right)^{1/2}
\end{equation}
for any $x\in Q_1$ and $0< t< c$. 
We divide the proof of \eqref{RH-10} into two cases.

Case 1. Suppose $0< t< 4 \e$. Observe that 
$$
G_\e (y)\sim G_\e (z) \quad \text{ for }  y, z\in Q(x, t).
$$
This implies that
$$
\left(\fint_{Q(x, t)} |G_\e|^2 \right)^{1/2}
\le C \left(\fint_{Q(x, 8t)} |G_\e |^{q_0}\right)^{1/q_0}.
$$

Case 2.  Suppose $4\e\le t< c$. For $y\in Q(x, t)$, write 
$$
G_\e (y)=\max \left( G^{(1)} _\e (y), G_\e^{(2)} (y)  \right),
$$
where $G_\e^{(1)}$ is defined as in \eqref{G-e}, but with the supremum being taken over all 
$Q^\e (z, r)$ with the properties that 
$y\in Q(z, r)$, $r\ge 2\e$, and $Q(z,r)\subset Q(x, 2t)$.
By the $L^{2/q_0}$ boundedness of the Hardy-Littlewood maximal operator,
 we have
 $$
 \aligned
 \left(\fint_{Q(x, t)} |G^{(1)} _\e|^2 \right)^{1/2}
 & \le C \left(\fint_{Q(x, 2t)} ( \e |\nabla u_\e| +|u_\e|)^2 \right)^{1/2}\\
& \le C \left(\fint_{Q(x, 8t)} ( \e |\nabla u_\e| +|u_\e|)^{q_0}  \right)^{1/q_0}
+ C \left(\fint_{Q(x, 8t)} |f|^2\right)^{1/2}\\
&\le C \left(\fint_{Q(x, 8t)} |G_\e |^{q_0}\right)^{1/q_0}
+ C \left(\fint_{Q(x, 8t)} |f|^2\right)^{1/2},
\endaligned
$$
where we have used \eqref{RH-3-0} for the second inequality.
Since
$$
G_\e^{(2)}  (y) \sim G_\e^{(2)} (z)  \quad \text{ for } y, z \in Q(x, t),
$$
we have
$$
\aligned
\left(\fint_{Q(x, t)} |G_\e^{(2)}|^2 \right)^{1/2}
 & \le C \left(\fint_{Q(x, t)} |G_\e^{(2)} |^{q_0} \right)^{1/q_0}\\
 & \le C \left(\fint_{Q(x, t)} |G_\e |^{q_0} \right)^{1/q_0}.
\endaligned
$$
As a result, we have proved \eqref{RH-10} for Case 2.
This completes the proof.
\end{proof}

\begin{cor}\label{cor-high}
Let $(u_\e, p_\e) \in H^1(Q_3^\e; \R^d) \times L^2(Q_3^\e)$ be a weak solution of \eqref{Stokes} in $Q_3^\e$ with 
$u_\e=0$ on $Q_3\cap \partial( \e \omega)$,
where $0< \e\le 1$. Then
\begin{equation}\label{high-1}
\aligned
& \left(\int_{Q_{1+\delta} \setminus Q_{1-\delta}}
(\e |\nabla u_\e| + |u_\e|)^2\, dx \right)^{1/2}\\
& \le C\delta^\sigma \left\{  \left(\int_{Q_3} (\e |\nabla u_\e| + | u_\e|)^2 \, dx \right)^{1/2}+  \| f\|_{L^\infty(Q_3)} \right\},
\endaligned
\end{equation}
 for any  $\delta \in (\e, 1]$, where $C$ and $\sigma>0 $ depend only on $d$, $\mu$,  and $Y_s$.
\end{cor}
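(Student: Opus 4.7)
The strategy is to deduce this boundary-layer estimate directly from the reverse H\"older inequality in Theorem \ref{RH-thm} by exploiting the fact that the annular slab $Q_{1+\delta}\setminus Q_{1-\delta}$ has measure of order $\delta$. Higher integrability of $g_\e$ combined with H\"older's inequality will produce the gain $\delta^\sigma$.

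Set $F_\e:=\e|\nabla u_\e|+|u_\e|$, let $A:=Q_{1+\delta}\setminus Q_{1-\delta}$, and write $A^{+\e}$ for the $\e$-neighborhood of $A$. For $\delta$ bounded below (say $\delta \geq 1/4$) the inequality \eqref{high-1} is trivial: the factor $\delta^\sigma$ is then bounded below by an absolute constant while $Q_{1+\delta}\setminus Q_{1-\delta}\subset Q_3$, so the left-hand side is controlled by the $L^2$ part of the right-hand side. I would therefore assume $\e\le \delta\le 1/4$ from here on, which guarantees $A^{+\e}\subset Q_{3/2}$ and $|A^{+\e}|\le C\delta$.

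The first step is a Fubini identity connecting $F_\e$ to $g_\e$. Writing $|g_\e(x)|^2=(2\e)^{-d}\int_{Q(x,\e)}F_\e^2\,dy$, I interchange the order of integration:
\begin{equation*}
\int_{A^{+\e}}|g_\e(x)|^2\,dx=\frac{1}{(2\e)^d}\int_{\R^d}F_\e(y)^2\,|Q(y,\e)\cap A^{+\e}|\,dy.
\end{equation*}
Since $Q(y,\e)\subset A^{+\e}$ whenever $y\in A$, this yields
\begin{equation*}
\int_A F_\e^2\,dy\;\le\;\int_{A^{+\e}}|g_\e|^2\,dx.
\end{equation*}
Next, I would apply H\"older's inequality with the higher exponent $q>2$ supplied by Theorem \ref{RH-thm} to get
\begin{equation*}
\Big(\int_{A^{+\e}}|g_\e|^2\,dx\Big)^{1/2}\le|A^{+\e}|^{1/2-1/q}\Big(\int_{A^{+\e}}|g_\e|^q\,dx\Big)^{1/q}\le C\,\delta^{1/2-1/q}\,\|g_\e\|_{L^q(Q_{3/2})}.
\end{equation*}

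Finally, Theorem \ref{RH-thm} applied with $R=3/2$ gives
\begin{equation*}
\|g_\e\|_{L^q(Q_{3/2})}\le C\Big\{\|F_\e\|_{L^2(Q_3)}+\|f\|_{L^q(Q_3)}\Big\}\le C\Big\{\|F_\e\|_{L^2(Q_3)}+\|f\|_{L^\infty(Q_3)}\Big\},
\end{equation*}
and combining the three displays produces \eqref{high-1} with $\sigma:=1/2-1/q>0$. No real obstacle is expected: all of the heavy lifting, namely the self-improvement of the reverse H\"older inequality, has already been carried out in Theorem \ref{RH-thm}, and what remains is a Fubini identity together with a H\"older interpolation that cashes in the $O(\delta)$ measure of the annular slab.
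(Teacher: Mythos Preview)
Your proposal is correct and follows essentially the same route as the paper: a Fubini-type identity to pass from $F_\e$ to $g_\e$, H\"older's inequality on the thin slab to extract $\delta^{1/2-1/q}$, and then Theorem~\ref{RH-thm} to control $\|g_\e\|_{L^q(Q_{3/2})}$. The only cosmetic difference is that you integrate $g_\e$ over the slightly enlarged set $A^{+\e}$ to get a clean constant in the Fubini step, whereas the paper integrates over $A$ itself and absorbs the resulting geometric constant into $C$.
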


\begin{proof}
We may assume $\delta\le 1/4$; for otherwise the estimate  is trivial.
By Fubini's Theorem,
$$
\left(\int_{Q_{1+\delta} \setminus Q_{1-\delta}}
(\e |\nabla u_\e| + |u_\e|)^2\, dx \right)^{1/2}
\le C \left( \int_{Q_{1+\delta} \setminus Q_{1-\delta}} |g_\e|^2\, dx \right)^{1/2},
$$
where $g_\e$ is defined by \eqref{g-e} and we have used the assumption $\delta> \e$.
By H\"older's inequality, the right-hand side of the inequality above is bounded by
$$
C \delta^\sigma \left(\int_{Q_{3/2}} |g_\e|^q \, dx \right)^{1q},
$$
where $q>2$ is given by Theorem \ref{RH-thm} and $\sigma =\frac12 -\frac{1}{q}>0$.
The estimate \eqref{high-1} now follows readily from \eqref{RH-1}.
\end{proof}


\section{Compactness}\label{section-4}
 
 The goal of this section is to establish the compactness in the following theorem.
 
\begin{thm}\label{C-thm}
Let $\{ (u_{\e_j} , p_{\e_j})\} $ be a sequence of weak solutions of 
\begin{equation}\label{C-00}
\left\{
\aligned
-\e_j ^2 \mu  \Delta u_{\e_j} + \nabla p_{\e_j}  & =f_{\e_j} & \quad & \text{ in } Q_4^{\e_j},\\
\text{\rm div} (u_{\e_j}) & =0 &\quad & \text{ in } Q_4^{\e_j},\\
u_{\e_j} &=0& \quad & \text{ on } Q_4\cap \partial (\e_j \omega),
\endaligned
\right.
\end{equation}
where $\e_j^{-1} \in \N$ and  $\e_j \to 0$.
Assume that 
\begin{equation}\label{C-1}
\fint_{Q_4} |u_{\e_j} |^2\le 1 \quad \text{ and } \quad  \| f_{\e_j} \|_{C^\alpha (Q_4)} \le 1
\end{equation}
for some $\alpha\in (0, 1)$.
Then there exists a subsequence, still denoted by $\{ (u_{\e_j}, p_{\e_j})\}$, and 
$f\in C^\alpha(Q_4; \R^d)$, $p_0 \in H^1(Q_2)$, such that
$f_{\e_j } \to f$  uniformly in $Q_4$, 
\begin{equation}\label{C-3}
P_{\e_j} -\fint_{Q_2} P_{\e_j}   \to p_0 \quad \text{ in } L^2(Q_2),
\end{equation}
\begin{equation}\label{C-2}
u_{\e_j} -\mu^{-1}  W(x/\e_j) (f-\nabla p_0)\to 0 \quad \text{ in } L^2(Q_1; \R^d),
\end{equation}
and
\begin{equation}\label{C-2-0}
\e_j \nabla u_{\e_j} -\mu^{-1}  \nabla W(x/\e_j) (f-\nabla p_0)\to 0 \quad \text{ in } L^2(Q_1; \R^{d\times d} ),
\end{equation}
where $P_{\e_j} $ denotes the extension of $p_{\e_j}$ defined  by \eqref{P}.
\end{thm}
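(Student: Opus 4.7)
\medskip

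\noindent\textbf{Proposed proof.} The plan is to combine uniform a priori estimates, two-scale convergence, and the boundary layer estimate from Corollary \ref{cor-high} to upgrade weak limits to the strong convergence statements \eqref{C-2} and \eqref{C-2-0}.

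\emph{Step 1 (uniform bounds and extraction of subsequences).} The Caccioppoli inequality from Remark \ref{remark-Ca}, combined with \eqref{C-1}, gives uniform bounds
\[
 \e_j \|\nabla u_{\e_j}\|_{L^2(Q_3^{\e_j})}
 + \Big\|p_{\e_j} - \fint_{Q_3^{\e_j}} p_{\e_j} \Big\|_{L^2(Q_3^{\e_j})} \le C.
\]
The classical extension construction for the pressure in perforated domains (Remark \ref{remark-P} and \cite{Tartar-80, LA-1990, Allaire-1997}) then yields $\|P_{\e_j} - \fint_{Q_2}P_{\e_j}\|_{L^2(Q_2)} + \|\nabla P_{\e_j}\|_{H^{-1}(Q_2)} \le C$. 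After extracting subsequences, Arzel\`a--Ascoli gives $f_{\e_j}\to f$ uniformly in $Q_4$. Two-scale convergence theory \cite{Allaire-89, Allaire-1997} then produces a limit $p_0 \in H^1(Q_2)$ with $P_{\e_j} - \fint_{Q_2} P_{\e_j} \to p_0$ strongly in $L^2(Q_2)$ (this is the standard Darcy-law compactness for the pressure), and $u_{\e_j}$ two-scale converges to $u_0(x,y) = \mu^{-1} W(y)(f(x) - \nabla p_0(x))$, as identified from the cell problem \eqref{W} tested against two-scale divergence-free test functions.

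\emph{Step 2 (strong convergence of the velocity).} Set
\[
 r_{\e_j}(x) = u_{\e_j}(x) - \mu^{-1} W(x/\e_j)\bigl(f(x) - \nabla p_0(x)\bigr),
\]
extended by zero. By Step 1 it two-scale converges to zero; the goal is to upgrade this to strong convergence in $L^2(Q_1)$. Fix $\delta \in (\e_j, 1/4)$ and a cutoff $\eta \in C_c^\infty(Q_{1+\delta})$ with $\eta=1$ on $Q_{1-\delta}$ and $|\nabla \eta|\lesssim \delta^{-1}$. Test the equation satisfied by $r_{\e_j}$ against a Bogovski\u{\i}-type correction of $\eta\, r_{\e_j}$ that restores divergence-freeness (needed since $p_0$ lives only in $H^1$, so one mollifies $\nabla p_0$ as well). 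The bulk term cancels by the cell problem for $W$, while the remaining terms are (a) commutators with $\nabla \eta$, supported in $Q_{1+\delta}\setminus Q_{1-\delta}$, and (b) terms involving $f_{\e_j} - f$. Term (a) is precisely controlled by Corollary \ref{cor-high}, giving a factor $\delta^\sigma$; term (b) is $o(1)$ by uniform convergence of $f_{\e_j}$; and the remaining interior contributions are $o(1)$ by the two-scale convergence to zero (after unfolding and using smoothness of $W$ in $y$ and approximation of $\nabla p_0$ in $L^2$). Letting $j\to \infty$ and then $\delta\to 0$ yields $r_{\e_j}\to 0$ in $L^2(Q_1)$, which is \eqref{C-2}.

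\emph{Step 3 (strong convergence of the gradient).} Once \eqref{C-2} is known, \eqref{C-2-0} is obtained by an energy identity localized with the same cutoff $\eta$. Multiplying the equation for $u_{\e_j}$ by $\eta\, u_{\e_j}$, integrating by parts, and comparing with the corresponding identity obtained by multiplying $-\Delta_y W(x/\e_j)(f-\nabla p_0)$ formally against itself, one sees that the $L^2$ norms $\|\e_j\nabla u_{\e_j}\|_{L^2(Q_1)}$ converge to $\|\mu^{-1}\nabla W(x/\e_j)(f-\nabla p_0)\|_{L^2(Q_1)}$. Together with equality of two-scale limits, this norm convergence upgrades the weak two-scale convergence to strong convergence in $L^2(Q_1;\R^{d\times d})$. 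The commutator terms from $\eta$ are again handled by the boundary layer estimate \eqref{high-1}.

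\emph{Main obstacle.} The central difficulty is precisely the localization: the standard two-scale machinery identifies the limit and gives weak convergence, but the target statements are \emph{interior} strong-convergence assertions on $Q_1\subset Q_4$, so one cannot use global energy equalities. The boundary layer bound of Corollary \ref{cor-high}, which quantifies that $\e_j\nabla u_{\e_j}$ and $u_{\e_j}$ have only $O(\delta^\sigma)$ energy in the annular shells $Q_{1+\delta}\setminus Q_{1-\delta}$, is exactly what is needed to dominate the commutator errors produced by the cutoff $\eta$, and is the key technical ingredient making the compactness argument work in this local setting.
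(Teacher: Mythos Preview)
Your proposal identifies all of the correct ingredients and is essentially the paper's approach, but the logical order of Steps~2 and~3 is inverted and your Step~2 is not clearly a standalone argument. Testing the equation for $r_{\e_j}$ against (a divergence-corrected) $\eta\, r_{\e_j}$ naturally produces the energy $\e_j^2\int \eta |\nabla r_{\e_j}|^2$, not $\int |r_{\e_j}|^2$; so what you are really controlling in Step~2 is the gradient quantity, and the passage to $\|r_{\e_j}\|_{L^2}\to 0$ still needs Poincar\'e on the perforated domain. Conversely, the energy identity in your Step~3 does \emph{not} require the strong velocity convergence from Step~2: multiplying the original equation by $\varphi u_{\e_j}$ and integrating by parts, one only uses the weak $L^2$ convergence of $u_{\e_j}$ (from two-scale convergence), the strong $L^2$ convergence of $P_{\e_j}$, and the boundary layer estimate to control the annular terms, and this already yields \eqref{C-2-0}.

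This is exactly how the paper proceeds: first it computes $I_j=\|\mu\e_j\nabla u_{\e_j}-\nabla W(x/\e_j)(f-\nabla p_0)\|_{L^2(Q_1)}^2$ by expanding the square, handles the cross and ``corrector'' terms via two-scale convergence, and treats $\e_j^2\int_{Q_1}|\nabla u_{\e_j}|^2$ by testing the Stokes equation against $u_{\e_j}\varphi$ with a cutoff $\varphi$ vanishing near $\partial Q_1$; the $(1-\varphi)$ remainder and the $\nabla\varphi$-commutators are controlled by Corollary~\ref{cor-high}, and then $\delta\to 0$ gives \eqref{C-2-0}. Only afterwards is \eqref{C-2} deduced, and by a much simpler route than your Step~2: since both $u_{\e_j}$ and $\mu^{-1}W(x/\e_j)F_j$ (with $F_j$ a smooth approximation of $f-\nabla p_0$, available because $\nabla p_0\in C^\alpha$ by interior regularity for $\mathrm{div}(K(f-\nabla p_0))=0$) vanish on $\partial(\e_j\omega)$, the Poincar\'e inequality \eqref{Poincare} gives $\|u_{\e_j}-\mu^{-1}W(x/\e_j)F_j\|_{L^2}\le C\e_j\|\nabla(\cdots)\|_{L^2}$, which is controlled by the already-proved gradient convergence. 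Swapping your Steps~2 and~3 and replacing your Step~2 by this Poincar\'e argument would make the proof complete and align it with the paper.
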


\begin{proof}
We divide the proof of Theorem  \ref{C-thm} into several steps.

\medskip

\noindent{\bf Step 1.}
By subtracting a constant we may assume $\int_{Q_2^{\e_j}} p_{\e_j} \, dx =0$.
It follows from Caccioppoli's inequality \eqref{Ca-1} and \eqref{C-1} that
\begin{equation}\label{C-6}
\e_j  \| \nabla u_{\e_j}  \|_{L^2(Q_2)} + \| P_{\e_j}  -\fint_{Q_2} P_{\e_j}  \|_{L^2(Q_2)} \le C.
\end{equation}
Thus, by passing to a subsequence, we may assume that
\begin{equation}\label{C-7}
\left\{
\aligned
& P_{\e_j}  -\fint_{Q_2} {P_{\e_j}}  \to p_0\ \  \text{ weakly in } L^2(Q_2),\\
 & u_{\e_j}  \  \text{ two-scale converges to }  u_0 (x, \xi) ,\\
& \e_j \nabla u_{\e_j}  \  \text{ two-scale converges to }  \nabla_\xi u_0(x, \xi),
\endaligned
\right.
\end{equation}
for some $p_0\in L^2(Q_2)$ and $u_0\in L^2(Q_2; H^1_{per}(Y; \R^d))$.
Moreover, since $u_{\e_j}=0$ in $Q_2\setminus (\e_j \omega)$ and  div$(u_{\e_j} )=0$ in $Q_2^\e$,
 the limit $u_0$ satisfies
\begin{equation}\label{C-6-0}
\left\{
\aligned
& u_0(x, \xi)=0 \quad \text{ in } Q_2 \times Y_s,\\
& \text{\rm div}_\xi u_0(x, \xi) = 0 \quad \text{ in } Q_2 \times Y,\\
& \text{\rm div}_x \int_{Y} u_0 (x, \xi)\, d\xi =0 \quad \text{ in } Q_2.
\endaligned
\right.
\end{equation}
Clearly, by passing to a subsequence, we may also assume that $f_{\e_j} \to f$ uniformly in $Q_4$
  for some $f\in C^\alpha (Q_4; \R^d)$
with $\| f\|_{C^\alpha(Q_4)} \le 1$.

\medskip

\noindent{\bf Step 2.}
We show that
\begin{equation}\label{C-9}
 P_{\e_j}  -\fint_{Q_2} {P_{\e_j}}  \to p_0 \quad \text{ in } L^2(Q_2).
\end{equation}
The proof is the same as in the case with boundary value $u_\e=0$ on $\partial Q_2$.
See e.g. \cite{Allaire-1997}. We sketch a proof here for the reader's convenience.
The key is to show that for any $\psi \in H_0^1(Q_2; \R^d)$,
\begin{equation}\label{C-8}
\aligned
& |< \nabla P_{\e_j} , \psi > _{H^{-1}(Q_2)\times H_0^1(Q_2)}|\\
& \le C \Big\{ \e_j  \|\nabla u_{\e_j} \|_{L^2(Q^{\e_j} _2)} + \| f\|_{L^2(Q^{\e_j} _2)} \Big\}
\Big\{ \e_j \|\nabla \psi \|_{L^2(Q_2)}
+ \|\psi \|_{L^2(Q_2)} \Big\}.
\endaligned
\end{equation}
To see \eqref{C-8}, let  $R_{\e_j} : H_0^1(Q_2; \R^d) \to H_0^1(Q_2^{\e_j} ; \R^d)$ be the restriction  operator defined in \cite[Lemma 1.7]{Allaire-1997}.
Then
$$
\aligned
& |< \nabla P_{\e_j} , \psi > _{H^{-1}(Q_2)\times H_0^1(Q_2)}|\\
&= |< \nabla p_{\e_j} , R_{\e_j}  (\psi) > _{H^{-1}(Q_2^{\e_j} ) \times H_0^1(Q_2^{\e_j} )} |\\
&= |< \e_j^2\mu  \Delta u_{\e_j} + f_{\e_j}, R_{\e_j} (\psi) >_{H^{-1} (Q_2^{\e_j} ) \times H_0^1(Q_2^{\e_j} )} |\\
&\le \e_j^2\mu  \| \nabla u_{\e_j} \|_{L^2(Q_2^{\e_j} )}
\|\nabla R_{\e_j} (\psi)\|_{L^2(Q_2^{\e_j}) } \|
+ \| f_{\e_j}\|_{L^2(Q_2^{\e_j}) } \| R_{\e_j} (\psi) \|_{L^2(Q_2^{\e_j}) }\\
& \le C 
 \Big\{ \e_j  \|\nabla u_{\e_j} \|_{L^2(Q^{\e_j} _2)} + \| f_{\e_j} \|_{L^2(Q^{\e_j} _2)} \Big\}
\Big\{ \e_j \|\nabla \psi \|_{L^2(Q_2)}
+ \|\psi \|_{L^2(Q_2)} \Big\}.
\endaligned
$$
The estimate \eqref{C-8} implies \eqref{C-9}.
For otherwise, $\nabla P_{\e_j}$ does not converge to $\nabla p_0$ in $H^{-1}(Q_2; \R^d)$.
It follows that there exists a sequence $\{ \psi_j \}\subset  H_0^1(Q_2; \R^d)$ such that
$\|\psi_j \|_{H_0^1(Q_2)} =1$ and
$$
| < \nabla P_{\e^\prime_j} -\nabla p_0, \psi_j>_{H^{-1}(Q_2) \times H_0^1(Q_2)} |
\ge c_0>0
$$
for a subsequence $\{\e^\prime_j\}$.
By passing to a subsequence we may assume $\psi_j  \to \psi_0$ weakly in $H_0^1(Q_2; \R^d)$ and 
thus strongly in $L^2(Q_2; \R^d)$.
Since
$$
<\nabla P_{\e_j}-\nabla p_0, \psi_0>_{H^{-1}(Q_2)\times H_0^1(Q_2)} \to 0,
$$
we see that 
$$
| < \nabla P_{\e^\prime_j}, \psi_j -\psi_0>_{H^{-1}(Q_2) \times H_0^1(Q_2)} |
\ge c_0/2
$$
if $j$ is sufficiently  large. This leads to a contradiction if we take  $\psi =\psi_j -\psi_0$ in \eqref{C-8}.

\medskip

\noindent{\bf Step 3.} We show that
\begin{equation}\label{C-10}
u_0(x, \xi) = \mu^{-1} W(\xi) (f-\nabla p_0) \quad \text{ in } Q_2.
\end{equation}

 By using the Stokes equations in $Q_2^\e$
 and  the two-scale convergence of $\e_j \nabla u_{\e_j}$, we have
\begin{equation}\label{C-13}
\mu \int_{Q_2 \times Y}
\nabla_\xi u_0(x, \xi) \cdot \nabla_\xi \psi (x, \xi)\, dx d\xi
=\int_{Q_2 \times Y} f(x) \psi (x, \xi)\, dx d\xi
\end{equation}
for any $\psi = \psi (x, \xi)\in L^2(Q_2; H_{per}^1(Y; \R^d))$ satisfying the conditions,
\begin{equation}\label{C-13-0}
\left\{
\aligned
& \text{\rm div}_\xi \psi (x, \xi)=0 \quad \text{ in } Q_2 \times Y,\\
&\psi (x, \xi)=0 \quad \text{ in }  Q_2 \times Y_s,\\
&\text{\rm div}_x \int_Y \psi (x, \xi)\, d\xi =0 \quad \text{ for } x\in Q_2,\\
& n\cdot \int_{Y} \psi (x, \xi) \, d\xi =0 \quad \text{ for } x\in \partial Q_2,
\endaligned
\right.
\end{equation}
where $n$ denotes the outward unit normal to $\partial Q_2$.
See \cite[p.48-89]{Allaire-1997}.
Let $p_*\in H^1(Q_2)$ be a weak solution of the Neumann problem,
\begin{equation}\label{C-11}
\left\{
\aligned
\text{\rm div} (K (f-\nabla p_*))  & =0 & \quad & \text{ in }  Q_2,\\
\mu n \cdot K (f-\nabla p_*)  & = n\cdot \fint_Y u_0 (x, \xi) \, d\xi & \quad & \text{ on } \partial Q_2,
\endaligned
\right.
\end{equation}
and
\begin{equation}\label{C-12}
v_0(x, \xi) = \mu^{-1} W(\xi) (f-\nabla p_*) \quad \text{ in } Q_2.
\end{equation}
It is not hard to show that \eqref{C-13} also holds if $u_0(x, \xi)$ is replaced by $v_0(x, \xi)$.
Thus,
$$
\int_{Q_2\times Y}
\nabla_\xi ( u_0(x, \xi) - v_0(x, \xi)) \cdot \nabla_\xi \psi (x, \xi)\, dx d\xi=0
$$
for any $\psi = \psi (x, \xi)\in L^2(Q_2; H_{per}^1(Y; \R^d))$ satisfying \eqref{C-13-0}.
By taking $\psi =u_0-v_0$, we see that
$u_0-v_0$ depends only on $x$. Since $u_0(x, \xi)-v_0(x, \xi)=0$ for $\xi \in Y_s$,
we conclude that $u_0(x, \xi)=v_0 (x, \xi)$ in $Q_2 \times Y$.

It remains to show that $\nabla p_*=\nabla p_0$ in $Q_2$. To this end, we note that
by using the Stokes equations in $Q_2^\e$, \eqref{C-9} and the two-scale convergence of $\e_j \nabla u_{\e_j}$,
\begin{equation}\label{C-15}
\aligned
\mu \int_{Q_2\times Y} \nabla_\xi u_0 (x, \xi)  & \cdot \nabla_\xi \psi (x, \xi)\, dx d\xi
 -\int_{Q_2\times Y} p_0(x)  \, \text{\rm div}_x \psi (x, \xi)\, dx d\xi\\
& =\int_{Q_2 \times Y} f(x) \psi (x, \xi)\, dx d\xi,
\endaligned
\end{equation}
if $\psi \in C_0^\infty(Q_2; H^1_{per}(Y))$ satisfies 
$\text{\rm div}_\xi \psi (x, \xi)=0$ in $Q_2\times Y$ and
$\psi(x, \xi)=0$ in $Q_2\times Y_s$.
By taking $\psi=\varphi (x) W_\ell (\xi)$ in \eqref{C-15}, where $1\le \ell \le d$ and $\varphi \in C_0^\infty(Q_2)$, we obtain 
$$
K_\ell ^j \int_{Q_2} \big (f^j- \frac{\partial p_*}{\partial x_j} \big) \varphi\, dx
-K_\ell^j \int_{Q_2} p_0(x) \frac{\partial \varphi}{\partial x_j}\, dx
=K_\ell^j \int_{Q_2} f^j \varphi\, dx,
$$
 where we also used the fact $u_0(x, \xi) = \mu^{-1} W(\xi) (f-\nabla p_*)$.
 It follows that 
 $$
 K_\ell^j \int_{Q_2}  \varphi \frac{\partial}{\partial x_j} \big( p_* -p_0) \, dx =0
 $$
 for $1\le \ell \le d$.
 Since $K=(K_\ell^j)$ is invertible and $\varphi \in C_0^\infty (Q_2)$ is arbitrary, we deduce that
 $\nabla(p_*-p_0)=0$ in $Q_2$.
 
\medskip

\noindent{\bf Step 4.}
We show that 
\begin{equation}\label{C-14}
\e_j \nabla u_{\e_j} - \mu^{-1} \nabla W(x/\e_j) (f-\nabla p_0) \to 0 \quad \text{ in } L^2(Q_1; \R^{d\times d} ).
\end{equation}

Let
\begin{equation}\label{C-16}
I_j =\| \mu \e_j \nabla u_{\e_j} - \nabla W(x/\e_j) (f-\nabla p_0)\|^2_{L^2(Q_{1})} .
\end{equation} 
Observe that
$$
\aligned
I_j & = \e_j^2\mu^2  \int_{Q_{1}}  |\nabla u_{\e_j}|^2\, dx
 -2\mu \int_{Q_{1}} \e_j \nabla u_{\e_j} \cdot \nabla W(x/\e_j) (f-\nabla p_0)\, dx\\
&\qquad\qquad +\int_{Q_{1}} |\nabla W(x/\e_j) (f-\nabla p_0)|^2 \, dx\\
&=I_j^1 +I_j^2 +I_j^3.
\endaligned
$$
Since  $\e\nabla u_{\e_j}$ two-scale converges to $\nabla_\xi u_0 (x, \xi)=\mu^{-1} \nabla W(\xi) (f-\nabla p_0)$ in $Q_2$,  we see that 
$$
\aligned
I_j^2 +I_j^3 \to  & - \int_{Q_{1} \times Y} |\nabla W(\xi) (f-\nabla p_0)|^2\, dx d\xi \\
&=-\int_{Q_{1}} K (f-\nabla p_0) \cdot (f-\nabla p_0)\, dx\\
&=-\mu \int_{Q_1} \overline{u} \cdot (f-\nabla p_0)\, dx,
\endaligned
$$
where $\overline{u}=\mu^{-1} K (f-\nabla p_0)$.
To handle $I_j^1$, we fix $\delta \in (0, 1/8)$ and  choose a cut-off function $\varphi =\varphi_\delta \in C_0^\infty(Q_1)$ such that 
$0\le \varphi \le 1$, $\varphi (x) =0$ if dist$(x, \partial Q_1)\le \delta/2$,
$\varphi(x)=1$ if  $x\in Q_1$ and dist$(x,  \partial Q_1)\ge \delta$, and
$|\varphi|\le C \delta^{-1}$.
Note that
$$
\aligned
I_j^1 &=\mu^2 \e_j^2 \int_{Q_1^\e} |\nabla u_{\e_j}|^2 \varphi\, dx +\mu^2  \e_j^2 \int_{Q_1^\e} |\nabla u_{\e_j}|^2  (1-\varphi)\, dx\\
&=\mu \int_{Q^\e _1} ( u_{\e_j} \cdot f_{\e_j} )  \varphi \, dx +\mu  \int_{Q^\e_1} (u_{\e_j} \cdot \nabla \varphi) (P_{\e_j} -\fint_{Q_2} P_{\e_j} ) \, dx\\
& \qquad
-\mu^2 \e_j^2 \int_{Q_1^\e} u_{\e_j} (\nabla u_{\e_j}) (\nabla \varphi)\, dx+\mu^2  \e_j^2 \int_{Q_1^\e} |\nabla u_{\e_j}|^2  (1-\varphi)\, dx,
\endaligned
$$
where we have used the Stokes equations in $Q_1^\e$ and integration by parts.
By the strong convergence of $f_{\e_j}$ and $P_{\e_j}  -\fint_{Q_2} P_{\e_j}$ and
weak convergence of $u_{\e_j}$ in $L^2(Q_2)$,
it  follows that
$$
\aligned
 &\limsup_{j \to \infty}
\Big|I_j^1 -\mu \int_{Q_1} (\overline{u} \cdot f) \varphi\, dx 
-\mu \int_{Q_1} (\overline{u} \cdot \nabla \varphi) p_0\, dx\Big|\\
&\le C \sup_j \int_{Q_1^\e \setminus Q_{1-\delta} }\e_j^2  |\nabla u_{\e_j}|^2\, dx\\
&\le C \delta^{2\sigma},
\endaligned
$$
where we have used \eqref{high-1} for the last inequality.
Since
$$
\int_{Q_1} (\overline{u} \cdot f) \varphi\, dx 
+\int_{Q_1} (\overline{u} \cdot \nabla \varphi) p_0\, dx
=\int_{Q_1} (\overline{u} \cdot (f-\nabla p_0) ) \varphi\, dx,
$$
we have proved that
$$
\aligned
\limsup_{j \to \infty} |I_j|
&\le \limsup_{j \to \infty} 
\Big| I_j^1 -\mu \int_{Q_1} \overline{u} \cdot (f-\nabla p_0)\, dx \Big|\\
&\le C \delta^{2\sigma} +\mu  \int_{Q_1} |\overline{u}| |f-\nabla p_0| (1-\varphi)\, dx\\
&\le C \delta^{2\sigma} + \mu \int_{Q_1\setminus Q_{1-\delta}}  |\overline{u}| |f-\nabla p_0| \, dx, 
\endaligned
$$
where $C$ does not depend on $\delta$.
By letting  $\delta\to 0$, we conclude $I_j \to0$, as $j \to \infty$.
\medskip

\noindent{\bf Step 5.}
We show that 
\begin{equation}\label{C-50}
 u_{\e_j} -  \mu^{-1}W(x/\e_j) (f-\nabla p_0) \to 0 \quad \text{ in } L^2(Q_1; \R^d).
\end{equation}

Since $f\in C^\alpha(Q_4; \R^d)$ and $\text{\rm div}(K(f-\nabla p_0))=0$ in $Q_2$, it follows that 
$\nabla p_0\in C^\alpha(Q_1; \R^d)$.
As a result, we may choose a sequence $\{ F_j \}\subset C^1(Q_1; \R^d)$ such that 
$$
\| F_j - (f-\nabla p_0) \|_{L^\infty(Q_1)} \to 0 \quad \text{ as } j \to \infty,
$$
and $\| \nabla F_j \|_{L^\infty (Q_1)} \le C \e_j^{\alpha -1}$.
Note that
$$
\aligned
& \| u_{\e_j} - \mu^{-1} W(x/\e_j) (f-\nabla p_0)\|_{L^2(Q_1)}\\
&\le \| u_{\e_j} -\mu^{-1} W(x/\e_j) F_j\|_{L^2(Q_1)}
+ \| \mu^{-1} W(x/\e_j) (F_j - (f-\nabla p_0))\|_{L^2(Q_1)}\\
&\le C \e_j \| \nabla ( u_{\e_j} - \mu^{-1} W(x/\e_j) F_j) \|_{L^2(Q_1)}
+ \| \mu^{-1} W(x/\e_j) (F_j - (f-\nabla p_0))\|_{L^2(Q_1)}\\
&\le C  \| \e_j \nabla u_{\e_j} -\mu^{-1}\nabla W(x/\e_j) F_j \|_{L^2(Q_1)}
+ C \e_j \| W(x/\e_j) \nabla F_j \|_{L^2(Q_1)}\\
&\qquad\qquad
+ \|\mu^{-1}  W(x/\e_j) (F_j - (f-\nabla p_0))\|_{L^2(Q_1)}\\
&\le C \| \e_j \nabla u_{\e_j} - \mu^{-1}\nabla W(x/\e_j) (f-\nabla p_0) \|_{L^2(Q_1)}
+C \| \nabla W(x/\e_j) (F_j -(f-\nabla p_0 ))\|_{L^2(Q_1)}\\
& \qquad\qquad
+  C \e_j \| W(x/\e_j) \nabla F_j \|_{L^2(Q_1)}
+  C \| W(x/\e_j) (F_j - (f-\nabla p_0))\|_{L^2(Q_1)}\\
&\le  C \| \e_j \nabla u_{\e_j} - \mu^{-1} \nabla W(x/\e_j) (f-\nabla p_0) \|_{L^2(Q_1)}\\
&\qquad\qquad
+ C \| F_j - (f-\nabla p_0)\|_{L^\infty(Q_1)}
+ C \e_j \| \nabla F_j \|_{L^\infty(Q_1)},
\endaligned
$$
where we have used the Poincar\'e inequality \eqref{Poincare} for the second inequality.
As a result, \eqref{C-50} follows from \eqref{C-14}.
This completes the proof of Theorem \ref{C-thm}.
\end{proof}

\begin{remark}\label{remark-C}
{\rm 
 It follows from the proof of Theorem \ref{C-thm} that
\begin{equation}\label{C-5}
u_{\e_j} \to \overline{u}:= \mu^{-1} K(f-\nabla p_0) \quad \text{ weakly  in } L^2(Q_2; \R^d).
\end{equation}
Since $\text{\rm div} (u_{\e_j})=0$ in $Q_2$, we obtain 
\begin{equation}\label{C-4}
\text{\rm div}(K (f-\nabla p_0))=0 \quad \text{ in } Q_2.
\end{equation}
}
\end{remark}


\section{Large-scale estimates for the velocity}\label{section-5}

In this section we give the proof of Theorems \ref{main-thm-1} and \ref{main-thm-2}.

\begin{lemma}\label{lemma-L-1}
Let $0< \beta<\alpha <1$.
There exist $\theta \in (0, 1/4)$ and $\e_0\in (0, 1/4)$, depending only on 
$d$, $\alpha$, $\beta$, $\mu$, and $Y_s$,
 such that $\theta^{-1}\in 4\N$, $\e_0^{-1} \in 4 \N$, and 
\begin{equation}\label{L-1}
\aligned
 & \inf_{E\in \R^d}
\left(\fint_{Q_\theta} 
|u_\e - \mu^{-1} W(x/\e) E|^2 \right)^{1/2} \le \theta^\beta \max
\left\{ \left(\fint_{Q_1} |u_\e|^2 \right)^{1/2},
 \| f\|_{C^{0, \alpha}(Q_1)}  \right\}, 
\endaligned
\end{equation}
whenever  $0< \e<\e_0$,  $\e^{-1} \in  4\N$,  and $(u_\e, p_\e)\in H^1(Q_1^\e; \R^d) \times L^2(Q_1)$ 
is a weak solution of the Stokes equations \eqref{Stokes}  in $Q_1^\e$,
$u_\e=0$ in  $Q_1\cap \partial (\e \omega)$, and $f\in C^\alpha(Q_1; \R^d)$ with $f(0)=0$.
\end{lemma}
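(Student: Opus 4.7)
The plan is to argue by contradiction in the spirit of Avellaneda--Lin, with Theorem \ref{C-thm} as the key compactness input. I would fix $\theta \in (0, 1/4)$ with $\theta^{-1} \in 4\N$, to be chosen at the end, and suppose the conclusion fails no matter how small $\e_0$ is taken. Choosing $\e_0 = 1/k$ would produce counterexamples: $\e_k \to 0$ with $\e_k^{-1} \in 4\N$, data $f_{\e_k} \in C^\alpha(Q_1;\R^d)$ with $f_{\e_k}(0) = 0$, and weak solutions $(u_{\e_k}, p_{\e_k})$ on $Q_1^{\e_k}$ vanishing on $Q_1 \cap \partial(\e_k \omega)$ such that, after normalizing the right-hand side of \eqref{L-1} to $1$,
\begin{equation*}
\inf_{E \in \R^d} \left( \fint_{Q_\theta} |u_{\e_k} - \mu^{-1} W(x/\e_k) E|^2 \right)^{1/2} > \theta^\beta.
\end{equation*}

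Next I would apply Theorem \ref{C-thm}, rescaled to the present setting on $Q_1$ (its proof is insensitive to the outer radius). After passing to a subsequence, there would exist $f \in C^\alpha(Q_1;\R^d)$ with $f(0) = 0$ and $\|f\|_{C^\alpha} \le 1$, together with $p_0 \in H^1(Q_{1/2})$, such that $f_{\e_k} \to f$ uniformly on $Q_1$,
\begin{equation*}
u_{\e_k} - \mu^{-1} W(x/\e_k)(f - \nabla p_0) \to 0 \quad \text{in } L^2(Q_{1/4};\R^d),
\end{equation*}
and, by Remark \ref{remark-C}, $\text{\rm div}(K(f - \nabla p_0)) = 0$ in $Q_{1/2}$. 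Reading this last relation as the constant-coefficient uniformly elliptic equation $\text{\rm div}(K\nabla p_0) = \text{\rm div}(Kf)$ with $f \in C^\alpha$, classical interior Schauder estimates would give $\nabla p_0 \in C^\alpha(Q_{1/4};\R^d)$ with norm bounded by a universal constant. Setting $E_0 := (f - \nabla p_0)(0) = -\nabla p_0(0)$ and invoking $f(0) = 0$, I would then have
\begin{equation*}
|(f - \nabla p_0)(x) - E_0| \le C_1 |x|^\alpha \qquad \text{for } x \in Q_{1/4}.
\end{equation*}

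To conclude, I would insert $E_0$ as a trial value in the infimum and split $\fint_{Q_\theta} |u_{\e_k} - \mu^{-1} W(x/\e_k) E_0|^2 \le 2 I_k + 2 II_k$, where $I_k = \fint_{Q_\theta} |u_{\e_k} - \mu^{-1} W(x/\e_k)(f - \nabla p_0)|^2$ and $II_k = \mu^{-2} \fint_{Q_\theta} |W(x/\e_k)|^2 |(f-\nabla p_0)(x) - E_0|^2$. For fixed $\theta$, the strong $L^2(Q_{1/4})$ convergence forces $I_k \to 0$ as $k \to \infty$; while for $II_k$ the H\"older bound on $f - \nabla p_0$, combined with the fact that $W \in L^2_{\rm per}(Y)$ so that $\fint_{Q_\theta} |W(x/\e_k)|^2\, dx$ is bounded uniformly once $\e_k \ll \theta$, gives $II_k \le C_2 \theta^{2\alpha}$. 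Choosing $\theta$ in advance, depending only on universal data, so small that $4 C_2 \theta^{2\alpha} < \theta^{2\beta}$ (which is possible since $\alpha > \beta$), and then taking $k$ large, the left-hand side falls strictly below $\theta^{2\beta}$, contradicting the standing assumption. The main obstacle is ensuring that the Schauder step for the limit problem delivers a genuinely \emph{universal} quantitative H\"older estimate on $\nabla p_0$ at the origin; this in turn requires an a priori control on $\|p_0\|_{L^2(Q_{1/2})}$, which is supplied precisely by the strong pressure convergence established in Theorem \ref{C-thm}, and an $L^q_{\rm per}$ bound on $W$ adequate even when $\partial Y_s$ is merely Lipschitz.
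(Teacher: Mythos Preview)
Your proposal is correct and follows essentially the same compactness-contradiction argument as the paper: both fix $\theta$ via the gap $\beta<\alpha$, extract a subsequence using Theorem~\ref{C-thm} (rescaled to $Q_1$), choose the trial vector $E_0=-\nabla p_0(0)$, and control the remainder by interior Schauder estimates for $\text{div}(K(f-\nabla p_0))=0$, with the needed a priori bound on $\|p_0\|_{L^2(Q_{1/2})}$ coming from the strong pressure convergence and Caccioppoli's inequality. The only cosmetic difference is that you split on squares via $2I_k+2II_k$ whereas the paper applies the triangle inequality directly to the $L^2$ norm; this does not affect the argument.
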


\begin{proof}
The lemma is proved by contradiction.
We begin by choosing $\theta\in (0, 1/4)$ such that $\theta^{-1}\in 4 \N$ and
$C_0\theta^\alpha \le (1/2) \theta^\beta$, where $C_0$ is the constant in \eqref{C-0}, which depends only on $d$, $\mu$, and $Y_s$.
This is possible since $\beta<\alpha $.

Suppose that no $\e_0$ with the desired properties exists for this $\theta$.
Then there exist a sequence of weak solutions $(u_{\e_j}, p_{\e_j}) $ of the Stokes equations,
$$
\left\{ 
\aligned
- \e_j^2 \mu \Delta u_{\e_j} +\nabla p_{\e_j} & = f_{\e_j}, \\
\text{\rm div} (u_{\e_j})  &=0,
\endaligned
\right.
$$
in $Q_1^{\e_j}$ with $u_{\e_j}=0$ on $Q_1 \cap \partial (\e_j \omega)$ such that $\e_j^{-1}  \in 4\N$, 
$\e_j \to 0$, 
\begin{equation}\label{L-2}
\max \left\{ \left(\fint_{Q_1} |u_{\e_j}|^2\right)^{1/2},  \| f\|_{C^{0, \alpha}(Q_1)}\right\}  \le 1,
\end{equation}
and
\begin{equation}\label{L-3}
 \inf_{E\in \R^d}
\left(\fint_{Q_\theta} 
|u_{\e_j}  - \mu^{-1} W(x/\e_j) E|^2 \right)^{1/2}
> \theta^\beta.
\end{equation}
By subtracting a constant we may assume $\int_{Q_{1/2}^{\e_j} } p_{\e_j} \, dx=0$.
It follows that
$$
\fint_{Q_{1/2}} P_{\e_j} = \fint_{Q_{1/2}^{\e_j} } p_{\e_j} =0.
$$
In view of Theorem \ref{C-thm}, by passing to a subsequence, we may assume that
$f_{\e_j} \to f$ uniformly in $Q_1$  for some $f\in C^\alpha (Q_1; \R^d)$,
\begin{equation}\label{L-4}
P_{\e_j}   \to p_0 \quad \text{ in } L^2(Q_{1/2}),
\end{equation}
and
\begin{equation}\label{L-5}
u_{\e_j} - \mu^{-1} W(x/\e_j) (f-\nabla p_0) \to 0 \quad \text{ in } L^2(Q_{1/4}; \R^d),
\end{equation}
for some $p_0\in H^1(Q_{1/2})$.
Note that
$$
\aligned
&\left(\fint_{Q_\theta} |u_{\e_j} - \mu^{-1} W(x/\e_j) E|^2\right)^{1/2}\\
& \le \left(\fint_{Q_\theta} |u_{\e_j} - \mu^{-1} W(x/\e_j)(f-\nabla p_0)|^2 \right)^{1/2}
+ \mu^{-1} \left(\fint_{Q_\theta} |W(x/\e_j) (f-\nabla p_0 - E)|^2 \right)^{1/2}\\
&\le \left(\fint_{Q_\theta} |u_{\e_j} -\mu^{-1}  W(x/\e_j)(f-\nabla p_0)|^2 \right)^{1/2}
+ C \| f-\nabla p_0 -E\|_{L^\infty(Q_\theta)}\\
&\le \left(\fint_{Q_\theta} |u_{\e_j} - \mu^{-1} W(x/\e_j)(f-\nabla p_0)|^2 \right)^{1/2}
+ C \theta^\alpha \Big\{ \| f\|_{C^{0, \alpha}(Q_{1/4} )} + \|\nabla p_0\|_{C^{0, \alpha} (Q_{1/4} )} \Big\},
\endaligned
$$
where we have let $E=\nabla p_0 (0)$ and used the assumption $f(0)=0$.
By letting $j \to \infty$ and using \eqref{L-3} and \eqref{L-5}, we obtain 
$$
\aligned
\theta^\beta
& \le C \theta^\alpha \Big\{ \| f\|_{C^{0, \alpha}(Q_{1/4})} + \|\nabla p_0\|_{C^{0, \alpha} (Q_{1/4})} \Big\}\\
&\le C \theta^\alpha   \Big\{ \| f\|_{C^{0, \alpha}(Q_{1})} + \| p_0\|_{L^2(Q_{1/2})} \Big\},
\endaligned
$$
where, for the last step,
we have used the interior $C^{1, \alpha}$ estimates for the elliptic equation  $\text{\rm div} (K(f-\nabla p_0))=0$ in $Q_{1/2}$
(see Remark \ref{remark-C}).

Finally, by the Caccioppoli inequality  \eqref{Ca-1}, 
$$
\| p_{\e_j} \|_{L^2(Q_{1/2} ^{\e_j})} \le C.
$$
This, together with \eqref{L-4}, yields $\| p_0\|_{L^2(Q_{1/2})} \le C$.
Hence, 
\begin{equation}\label{C-0}
\theta^\beta \le C_0 \theta^\alpha,
\end{equation}
where $C_0>0$ depends only on $d$, $\mu$,  and $Y_s$.
This is a contradiction with the choice of $\theta$.
\end{proof}

\begin{remark}\label{remark-L-1}
{\rm
Note that if $v_\e = W_j (x/\e)$ and $q_\e = \e^{-1} \pi_j (x/\e) - x_j$, then 
$$
\left\{
\aligned
-\e^2 \Delta v_\e +\nabla q_\e & =0,\\
\text{\rm div} (v_\e) & =0,
\endaligned
\right.
$$
in $\R^d\setminus \e\omega$ and $v_\e =0$ on $\partial\omega$.
This allows us to replace $u_\e$ in \eqref{L-1} by $u_\e - \mu^{-1} W(x/\e) E_0$ for any $E_0\in \R^d$.
It follows that \eqref{L-1} in Lemma \ref{lemma-L-1}  may be replaced by
\begin{equation}\label{L-1-0}
\aligned
 & \inf_{E\in \R^d}
\left(\fint_{Q_\theta} 
|u_\e - \mu^{-1} W(x/\e) E|^2 \right)^{1/2} \\
& \le \theta^\beta \max
\left\{  \inf_{E\in \R^d} \left(\fint_{Q_1} |u_\e- \mu^{-1} W(x/\e) E|^2 \right)^{1/2},
 \| f\|_{C^{0, \alpha}(Q_1)}  \right\}.
\endaligned
\end{equation}
}
\end{remark}

\begin{lemma}\label{lemma-L-2}
Let $0< \beta< \alpha<1$.
Let $\theta, \e_0\in (0, 1/4)$ be given by Lemma \ref{lemma-L-1}. Then
\begin{equation}\label{L-2-1}
\inf_{E\in \R^d}
\left(\fint_{Q_{\theta^k}} 
|u_\e - \mu^{-1} W(x/\e) E|^2 \right)^{1/2}
\le 
\theta^{k\beta}
\max \left\{ \left(\fint_{Q_1} |u_\e|^2\right)^{1/2}, \| f\|_{C^{0, \alpha} (Q_1)} \right\},
\end{equation}
whenever  $0< \e<\theta^{k-1} \e_0$,  $\e^{-1} \in  4\N$,  and $(u_\e, p_\e)\in H^1(Q_1^\e; \R^d) \times L^2(Q_1)$ 
is a weak solution of the Stokes equations \eqref{Stokes}  in $Q_1^\e$,
$u_\e=0$ in  $Q_1\cap \partial (\e \omega)$, and $f\in C^\alpha(Q_1; \R^d)$ with $f(0)=0$.
\end{lemma}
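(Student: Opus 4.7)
The plan is to prove \eqref{L-2-1} by induction on $k$, rescaling at each step so that Lemma \ref{lemma-L-1} (in its strengthened form \eqref{L-1-0} from Remark \ref{remark-L-1}) can be reapplied on the unit scale. The base case $k=1$ is immediate from \eqref{L-1}: take $E=0$ in the infimum form \eqref{L-1-0}, or simply invoke \eqref{L-1} directly, noting $M_0 := \max\{(\fint_{Q_1}|u_\e|^2)^{1/2},\|f\|_{C^{0,\alpha}(Q_1)}\}$ bounds its right-hand side.

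For the inductive step, suppose \eqref{L-2-1} holds at level $k$ and take $(u_\e, p_\e)$ satisfying the hypotheses at level $k+1$, so in particular $\e < \theta^k \e_0$. Setting $r = \theta^k$, I would rescale by
\begin{equation*}
v(x) = u_\e(rx), \qquad q(x) = r^{-1} p_\e(rx), \qquad g(x) = f(rx),
\end{equation*}
which by the rescaling relation in Section \ref{section-2} produces a Stokes solution $(v,q)$ on $Q_1^{\e'}$ with period $\e' = \e/\theta^k < \e_0$, forcing $g$, and vanishing boundary data on $Q_1 \cap \partial(\e'\omega)$. Since $f(0)=0$ we have $g(0)=0$, and the arithmetic condition $(\e')^{-1} \in 4\N$ is compatible with the built-in choices $\theta^{-1}, \e^{-1} \in 4\N$. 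Applying \eqref{L-1-0} to $v$ and then reversing the change of variables $y=rx$ in both averages (so $Q_\theta \mapsto Q_{\theta^{k+1}}$ and $Q_1 \mapsto Q_{\theta^k}$) yields
\begin{equation*}
\inf_{E} \left(\fint_{Q_{\theta^{k+1}}} |u_\e - \mu^{-1} W(x/\e) E|^2\right)^{1/2} \le \theta^\beta \max\left\{ \inf_{E} \left(\fint_{Q_{\theta^k}} |u_\e - \mu^{-1} W(x/\e) E|^2\right)^{1/2}, \ \|g\|_{C^{0,\alpha}(Q_1)}\right\}.
\end{equation*}
The first entry of the maximum is bounded by $\theta^{k\beta} M_0$ by the induction hypothesis.

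For the second entry, since $g(x) = f(\theta^k x)$ and $f(0)=0$, a direct calculation gives $[g]_{C^{0,\alpha}(Q_1)} = \theta^{k\alpha}[f]_{C^{0,\alpha}(Q_{\theta^k})}$ and $\|g\|_{L^\infty(Q_1)} \le \theta^{k\alpha}[f]_{C^{0,\alpha}(Q_1)}$, so $\|g\|_{C^{0,\alpha}(Q_1)} \le C\theta^{k\alpha}\|f\|_{C^{0,\alpha}(Q_1)} \le C\theta^{k\alpha}M_0$. Since $\alpha>\beta$ and the $\theta$ furnished by Lemma \ref{lemma-L-1} satisfies $C_0\theta^\alpha \le \tfrac{1}{2}\theta^\beta$ (which can be reduced further to also absorb the universal constant $C$ from rescaling the norm), one has $C\theta^{k\alpha} \le \theta^{k\beta}$ for every $k \ge 1$, and hence $\|g\|_{C^{0,\alpha}(Q_1)} \le \theta^{k\beta}M_0$. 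The maximum on the right is therefore at most $\theta^{k\beta}M_0$, and the bound $\theta^{(k+1)\beta}M_0$ follows, completing the induction.

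The only non-mechanical point is the bookkeeping for the $C^{0,\alpha}$ norm under rescaling: naively one picks up a factor $\theta^{k\alpha}$ rather than $\theta^{k\beta}$, and a multiplicative constant from the norm. I expect the main obstacle to be verifying that these constants can be absorbed uniformly in $k$, which is precisely what the gap $\alpha>\beta$ buys—and why the choice of $\theta$ in Lemma \ref{lemma-L-1} was calibrated to $C_0\theta^\alpha \le \tfrac{1}{2}\theta^\beta$. A secondary bookkeeping point is the preservation of $f(0)=0$ and the $4\N$-divisibility of $(\e')^{-1}$ through each rescaling step, both of which are automatic from the centered nature of the rescaling and the multiplicative structure of $4\N$.
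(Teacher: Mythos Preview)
Your proposal is correct and follows essentially the same route as the paper: induction on $k$, rescale by $\theta^k$ to return to the unit scale, apply the one-step estimate \eqref{L-1-0}, and then feed in the induction hypothesis together with the $\theta^{k\alpha}$ gain from rescaling the H\"older norm of $f$.

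The one place you are more cautious than the paper is the constant in $\|g\|_{C^{0,\alpha}(Q_1)}\le C\theta^{k\alpha}\|f\|_{C^{0,\alpha}(Q_1)}$. The paper simply writes $\|g\|_{C^{0,\alpha}(Q_1)}=\theta^{k\alpha}\|f\|_{C^{0,\alpha}(Q_{\theta^k})}$ and moves on; this is clean if one reads $\|\cdot\|_{C^{0,\alpha}}$ as the H\"older seminorm (a harmless convention here since $f(0)=0$ forces $\|f\|_{L^\infty(Q_r)}\le r^\alpha[f]_{C^{0,\alpha}}$ anyway). With that reading no extra constant appears, and your worry about absorbing $C$ by shrinking $\theta$ becomes moot. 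Either way the argument closes, so this is purely bookkeeping and not a gap.
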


\begin{proof}
The lemma is proved by induction.
The case $k=1$ is given by \eqref{L-1-0}.

Suppose the estimate \eqref{L-2-1} holds for some $k\ge 1$.
 Let $(u_\e, p_\e)\in H^1(Q_1^\e; \R^d) \times L^2(Q_1)$ 
be a weak solution of the Stokes equations \eqref{Stokes}  in $Q_1^\e$,
$u_\e=0$ in  $Q_1\cap \partial (\e \omega)$, and $f\in C^\alpha(Q_1; \R^d)$ with $f(0)=0$.
Assume that $0< \e<\theta^k \e_0$ and $\e^{-1} \in 4\N$.
Consider
$$
v(x) =u_\e (\theta^k x) \quad \text{ and } \quad q(x)= \theta^{-k} p_\e (\theta^k x).
$$
Then 
$$
\left\{
\aligned
- (\e \theta^{-k})^2 \mu \Delta v + \nabla q & =g, \\
\text{\rm div}(v)  & =0,
\endaligned
\right.
$$
in  $Q_1^{\theta^{-k} \e}$, and $v=0$ on $Q_1\cap \partial (\e \theta^{-k} \omega)$, where
$g(x) =f (\theta^k x)$.
Since $\theta^{-k} \e<\e_0$,
it follows from \eqref{L-1-0}that
$$
\aligned
 & \inf_{E\in \R^d}
\left(\fint_{Q_{\theta^{k+1} }} 
|u_\e - \mu^{-1} W(x/\e) E|^2 \right)^{1/2}
=\inf_{E\in \R^d}
\left(\fint_{Q_\theta} | v -\mu^{-1}  W(x/(\e \theta^{-k}) ) E|^2 \right)^{1/2}\\
&\le \theta  \max \left\{ 
 \inf_{E\in \R^d}\left( 
\fint_{Q_1} |v- \mu^{-1} W(x/(\e \theta^{-k}) )E |^2 \right)^{1/2}, \| g\|_{C^{0, \alpha}(Q_1)} \right\}\\
&= \theta  \max \left\{ 
 \inf_{E\in \R^d}\left( 
\fint_{Q_{\theta^k} } |u_\e- \mu^{-1} W(x/\e )E |^2 \right)^{1/2}, \theta^{k\alpha} \| f\|_{C^{0, \alpha}(Q_{\theta^k} )} \right\}\\
&\le \theta^{(k+1) \beta }  \max \left\{ 
 \inf_{E\in \R^d}\left( 
\fint_{Q_1 } |u_\e- \mu^{-1} W(x/\e )E |^2 \right)^{1/2}, \| f\|_{C^{0, \alpha}(Q_1 )} \right\},
\endaligned
$$
where we have used the induction assumption for the last inequality.
This completes the induction argument.
\end{proof}

The next theorem gives the large-scale $C^{0, \alpha}$ estimates for the Stokes equations in perforated domains.

\begin{thm}\label{C-1-a-thm}
Let $(u_\e, p_\e)$ be a weak solution of the Stokes equations in $Q_R^\e$ with
$u_\e=0$ on $Q_R\cap \partial (\e\omega)$, where $0<\e < R$ and $f\in C^{\alpha}(Q_R; \R^d)$ for some $0<\alpha<1$.
Then
\begin{equation}\label{C-1-a}
\aligned
 & \inf_{E\in \R^d} \left(\fint_{Q_r} | u_\e -\mu^{-1} W(x/\e) E|^2\right)^{1/2}\\
&\le C \left( \frac{r}{R} \right)^\beta
\left\{ 
\left(\fint_{Q_R} |u_\e|^2 \right)^{1/2}
+ R^\alpha \| f\|_{C^{0, \alpha} (Q_R)} \right\},
\endaligned
\end{equation}
for any $\e\le r< R$, where $0< \beta< \alpha$ and
$C$ depends only on $d$, $\mu$,  $\alpha$, $\beta$, and $Y_s$.
\end{thm}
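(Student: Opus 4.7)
The plan is to derive Theorem \ref{C-1-a-thm} from the iteration Lemma \ref{lemma-L-2} by a standard rescaling and dyadic interpolation argument. The analytic content is already encoded in the compactness Lemma \ref{lemma-L-1} and its iterate, so what remains is a careful chain of reductions.

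First I would perform the rescaling $v(x) = u_\e(Rx)$, $q(x) = R^{-1} p_\e(Rx)$ to reduce to $R = 1$; the rescaled forcing $g(x) = f(Rx)$ satisfies $\|g\|_{C^{0,\alpha}(Q_1)} \le C R^\alpha \|f\|_{C^{0,\alpha}(Q_R)}$. Next, by replacing $p_\e$ with $p_\e - f(0)\cdot x$, I may assume $g(0) = 0$ without altering the velocity, at the cost of at most a factor of $2$ in the $C^\alpha$ norm. To arrange the technical hypothesis $\e^{-1} \in 4\N$ of Lemma \ref{lemma-L-2}, I would set $R_0 = 4k\e$ with $k = \lfloor (4\e)^{-1}\rfloor$, so that $R_0 \in [1/2, 1]$ and $R_0/\e \in 4\N$, and rescale once more by $R_0$; the case of $\e$ not small (say $\e \ge 1/8$) forces $r \ge \e \ge 1/8$, in which case \eqref{C-1-a} follows trivially by taking $E = 0$ and using $(\fint_{Q_r}|u_\e|^2)^{1/2} \le C (R/r)^{d/2}(\fint_{Q_R}|u_\e|^2)^{1/2}$.

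For the remaining range $\e \le r < \theta R_0$, I would choose the unique integer $k \ge 1$ with $\theta^{k+1} R_0 \le r < \theta^k R_0$. The lower bound $r \ge \e$ translates to $\e/R_0 < \theta^{k-1}\e_0$, which is precisely the smallness hypothesis of Lemma \ref{lemma-L-2} (after choosing $\theta, \e_0$ appropriately). Applying that lemma at the $k$-th iterate yields
\begin{equation*}
\inf_{E\in\R^d}\left(\fint_{Q_{\theta^k R_0}}|u_\e - \mu^{-1} W(x/\e)E|^2\right)^{1/2} \le \theta^{k\beta}\max\left\{\left(\fint_{Q_{R_0}}|u_\e|^2\right)^{1/2},\,\|g\|_{C^{0,\alpha}(Q_{R_0})}\right\}.
\end{equation*}
Passing from $\fint_{Q_{\theta^k R_0}}$ to $\fint_{Q_r}$ costs at most a factor of $\theta^{-d/2}$, and $\theta^{k\beta} \le \theta^{-\beta}(r/R_0)^\beta \le C (r/R)^\beta$; undoing the rescalings produces \eqref{C-1-a}.

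The main obstacle is bookkeeping rather than new analysis: one must verify that after each rescaling the hypotheses of Lemma \ref{lemma-L-2}, namely the divisibility $\e^{-1}\in 4\N$, the vanishing $f(0)=0$, and the smallness $\e < \theta^{k-1}\e_0$, continue to hold simultaneously, and that every $r$ in the range $\e \le r < R$ is covered either by the iteration or by the trivial energy bound. No further estimates on $u_\e$ or $p_\e$ beyond those already established are needed.
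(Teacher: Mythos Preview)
Your proposal is correct and follows the same route as the paper: reduce to $f(0)=0$ by absorbing $f(0)\cdot x$ into the pressure, rescale by a factor in $[1/2,1]$ to enforce the divisibility condition $\e^{-1}\in 4\N$, and then read off \eqref{C-1-a} from the iteration Lemma \ref{lemma-L-2} by interpolating between the discrete scales $\theta^k$. One small caution: the implication ``$r\ge \e \Rightarrow \e/R_0 < \theta^{k-1}\e_0$'' only gives $\e/R_0<\theta^k$, and there is no a priori relation $\theta\le \e_0$; the clean fix is to apply Lemma \ref{lemma-L-2} at the largest admissible level $k_*$ (so $\theta^{k_*}$ is comparable to $\e/\e_0$) and absorb the remaining range $\e\le r\le C\e$ into the constant, which is exactly the kind of bookkeeping you anticipated.
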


\begin{proof}
Note that \eqref{C-1-a} is trivial if $cR< r<R$. Also, observe  that
$$
-\mu \e^2 \Delta u_\e +\nabla (p_\e - f(0) \cdot x) = f- f(0).
$$
We may assume  $f(0)=0$. As a result, by Lemma \ref{lemma-L-2},
\eqref{C-1-a} holds for $\e  \le r<R=1$, if $\e^{-1} \in 4 \N$.
By considering the solution $( u_\e(tx), t^{-1} p_\e(tx))$, where $1/2< t< 1$, we deduce that
\begin{equation}\label{C-1-b}
\aligned
 & \inf_{E\in \R^d} \left(\fint_{Q_{r} } | u_\e -\mu^{-1}  W(x/\e) E|^2\right)^{1/2}\\
&\le C r^\beta
\left\{ 
\left(\fint_{Q_t} |u_\e|^2 \right)^{1/2}
+  \| f\|_{C^{0, \alpha} (Q_t)} \right\},
\endaligned
\end{equation}
if $\e < r< t$ and $t\e^{-1} \in 4 \N$.
It follows that \eqref{C-1-a} holds for $\e\le r< R=1$, without the condition $\e^{-1}\in 4\N$.
By dilation this implies that \eqref{C-1-a} holds for any $\e\le r<R$.
\end{proof}

\begin{proof}[\bf Proof of Theorem \ref{main-thm-2}]
The estimate for the second term in the right-hand side of \eqref{main-2}
is contained in Theorem \ref{C-1-a-thm}.
For the first term, we apply the Caccioppli inequality \eqref{Ca-1} to
$u_\e - \mu^{-1} W(x/\e) E$ and $p_\e-    ( \e \pi (x/\e)-x)\cdot E$.
\end{proof}

The remaining of this section is devoted to the proof of Theorem \ref{main-thm-1}.

\begin{lemma}\label{lemma-L-3}
Let $0< \beta< \alpha<1$ and  $\theta, \e_0\in (0, 1/4)$ be given by Lemma \ref{lemma-L-1}.
Let $0< \e<\theta^{k-1} \e_0$,  $\e^{-1} \in  4\N$.
Suppose $(u_\e, p_\e)\in H^1(Q_1^\e; \R^d) \times L^2(Q_1)$ 
is a weak solution of the Stokes equations \eqref{Stokes}  in $Q_1^\e$,
$u_\e=0$ in  $Q_1\cap \partial (\e \omega)$, and $f\in C^\alpha(Q_1; \R^d)$ with $f(0)=0$.
Let $E(k)  \in \R^d$ be such that
\begin{equation}\label{L-3-0}
\left(\fint_{Q_{\theta^k}} |u_\e - \mu^{-1} W(x/\e) E(k) |^2 \right)^{1/2}
=
\inf_{E\in \R^d}
\left(\fint_{Q_{\theta^k}} 
|u_\e - \mu^{-1} W(x/\e) E|^2 \right)^{1/2}.
\end{equation}
Then
\begin{equation}\label{L-3-1}
|E(k)|
\le C\Big\{ \| u_\e\|_{L^2(Q_1)} + \| f\|_{C^\alpha(Q_1)} \Big\},
\end{equation}
where $C$ depends only on $d$, $\mu$,  and $Y_s$.
\end{lemma}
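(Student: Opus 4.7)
Set $M_0 := \max\{ (\fint_{Q_1} |u_\e|^2)^{1/2},\, \|f\|_{C^{0,\alpha}(Q_1)} \}$, so that Lemma \ref{lemma-L-2}, applied at scale $\theta^j$ for $0 \le j \le k$ (with the trivial competitor $E = 0$ handling $j = 0$), gives
\begin{equation*}
N_j := \left( \fint_{Q_{\theta^j}} |u_\e - \mu^{-1} W(x/\e) E(j)|^2 \right)^{1/2} \le \theta^{j\beta} M_0.
\end{equation*}
The plan is to bound $|E(k)|$ by telescoping the consecutive increments $E(j) - E(j-1)$, whose $L^2$ sizes on $Q_{\theta^j}$ are controlled by $N_j$ and $N_{j-1}$ through the triangle inequality. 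To pass from the $L^2$ size of $W(x/\e)(E(j)-E(j-1))$ back to $|E(j)-E(j-1)|$, the key auxiliary estimate is the non-degeneracy bound
\begin{equation*}
\fint_{Q_r} |W(x/\e) F|^2 \, dx \ge c_0 |F|^2, \qquad F \in \R^d,
\end{equation*}
valid as soon as $r \ge L_0 \e$, for some constants $c_0, L_0 > 0$ depending only on $d$ and $Y_s$.

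For the non-degeneracy, by periodicity $\int_Y |W(y) F|^2 \, dy$ is a quadratic form in $F$ that is strictly positive definite: if it vanishes then $\sum_j F^j W_j \equiv 0$ on $Y$, and integration combined with the invertibility of $K$ (cf.\ \eqref{K-1}) forces $F = 0$. Partitioning $Q_r$ into translates of $\e Y$ and estimating the residual boundary layer, whose relative volume is $O(\e/r)$, then yields the claimed lower bound provided $r/\e \ge L_0$ for $L_0 = L_0(d, Y_s)$ sufficiently large. Since the auxiliary $\e_0$ in Lemma \ref{lemma-L-1} is extracted with no a priori lower bound, we may shrink it once more so that $\e_0 \le \theta/L_0$; together with the standing hypothesis $\e < \theta^{k-1} \e_0$, this ensures $\theta^j/\e \ge L_0$ for every $1 \le j \le k$, and in particular $1/\e \ge L_0$.

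Now for $1 \le j \le k$, a triangle inequality in $L^2(Q_{\theta^j})$ together with enlargement of the averaging cube from $Q_{\theta^j}$ to $Q_{\theta^{j-1}}$ gives
\begin{equation*}
\left( \fint_{Q_{\theta^j}} |\mu^{-1} W(x/\e)(E(j) - E(j-1))|^2 \right)^{1/2} \le N_j + \theta^{-d/2} N_{j-1} \le C \theta^{(j-1)\beta} M_0,
\end{equation*}
so the non-degeneracy at scale $r = \theta^j$ with $F = E(j) - E(j-1)$ yields $|E(j) - E(j-1)| \le C \theta^{(j-1)\beta} M_0$. The base case $|E(0)| \le C M_0$ is obtained in the same way: $N_0 \le M_0$ (take $E = 0$ in the infimum), one triangle inequality, and the non-degeneracy on $Q_1$. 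Telescoping,
\begin{equation*}
|E(k)| \le |E(0)| + \sum_{j=1}^k |E(j) - E(j-1)| \le C M_0 \Bigl( 1 + \sum_{j=1}^\infty \theta^{(j-1)\beta} \Bigr) \le C M_0,
\end{equation*}
which is the asserted bound. The crux is the corrector non-degeneracy in the second paragraph; once that is in place the remainder is a routine telescoping argument driven by Lemma \ref{lemma-L-2}.
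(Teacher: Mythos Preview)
Your proof is correct and follows the same approach as the paper: both exploit the non-degeneracy of $W$ to convert $L^2$ control of $\mu^{-1}W(x/\e)(E(j)-E(j-1))$ on $Q_{\theta^j}$ into control of $|E(j)-E(j-1)|$, and then telescope using Lemma~\ref{lemma-L-2}. The paper's version is slightly leaner in two places: it sets $E(0)=0$ outright, so no separate base-case bound is needed; and it records the non-degeneracy \eqref{L-3-2} as valid for all $r\ge\e$ (indeed, after rescaling, $Q_{r/\e}$ contains at least $(2\lfloor r/\e\rfloor)^d$ full unit cells and $\lfloor s\rfloor/s\ge 1/2$ for $s\ge 1$), so no auxiliary threshold $L_0$ or additional shrinking of $\e_0$ is required.
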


\begin{proof}
The proof uses the following observation,
\begin{equation}\label{L-3-2}
|E| \le C \left(\fint_{Q_r} |\mu^{-1} W(x/\e) E|^2\right)^{1/2}
\end{equation}
for any $r\ge \e$ and $E\in \R^d$, where $C$ depends only on $d$, $\mu$,  and $Y_s$.
Let $1\le \ell \le k$ and  $E(0)=0$. Then
$$
\aligned
& |E(\ell) -E(\ell-1)|
\le C \left(\fint_{Q_{\theta^\ell}} |  \mu^{-1} W(x/\e) (E(\ell) -E(\ell-1)) |^2 \right)^{1/2}\\
&\le C \left(\fint_{Q_{\theta^\ell}} |u_\e - \mu^{-1} W(x/\e) E(\ell)|^2 \right)^{1/2}
 + C \left(\fint_{Q_{\theta^\ell}} |u_\e - \mu^{-1} W(x/\e) E(\ell-1)|^2 \right)^{1/2}\\
&  \le C \left(\fint_{Q_{\theta^\ell}} |u_\e - \mu^{-1} W(x/\e) E(\ell)|^2 \right)^{1/2}
 +C \left(\fint_{Q_{\theta^{\ell-1} }} |u_\e - \mu^{-1} W(x/\e) E(\ell-1)|^2 \right)^{1/2}\\
 &\le C \theta^{ \ell \beta }
 \Big\{ \| u_\e\|_{L^2(Q_1)} + \| f\|_{C^\alpha(Q_1)} \Big\},
 \endaligned
 $$
 where we have used \eqref{L-2-1} for the last inequality.
 It follows that
 $$
 \aligned
 |E(k)| & \le \sum_{\ell=1}^k | E(\ell)-E(\ell-1)|\\
 &\le C \big\{ \| u_\e\|_{L^2(Q_1)} + \| f\|_{C^\alpha(Q_1)} \big\}.
 \endaligned
 $$
\end{proof}

\begin{thm}\label{L-infty-thm}
Let $(u_\e, p_\e)$ be a weak solution of the Stokes equations in $Q_R^\e$ with
$u_\e=0$ on $Q_R\cap \partial (\e\omega)$, where $0<\e < R$ and $f\in C^{\alpha}(Q_R; \R^d)$ for some $0<\alpha<1$.
Then
\begin{equation}\label{L-in-1}
\left(\fint_{Q_r } |u_\e|^2\right)^{1/2}
\le C
\left\{ 
\left(\fint_{Q_R} |u_\e|^2 \right)^{1/2}
+ R^\alpha \| f\|_{C^{0, \alpha} (Q_R)} \right\},
\end{equation}
for any $\e\le r< R$, where $C$ depends only on $d$, $\mu$, $\alpha$, and $Y_s$.
\end{thm}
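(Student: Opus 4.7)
The plan is to extract \eqref{L-in-1} from the $C^{0,\alpha}$-type decay in Lemma \ref{lemma-L-2} combined with the uniform bound \eqref{L-3-1} on the minimizing vectors $E(k)$.

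First I would carry out two standard reductions. By rescaling I may assume $R=1$. To reduce to $f(0)=0$, set $v_\e(x):=\mu^{-1}W(x/\e)f(0)$; a direct computation from the cell problem \eqref{W} shows that $v_\e$, paired with the pressure $\e\pi(x/\e)\cdot f(0)$, solves the Stokes system in $\e\omega$ with the constant right-hand side $f(0)$ and vanishes on $\partial(\e\omega)$. Hence $\tilde u_\e:=u_\e-v_\e$ is a weak solution of \eqref{Stokes} in $Q_1^\e$ with right-hand side $\tilde f:=f-f(0)$, satisfying $\tilde f(0)=0$ and $\|\tilde f\|_{C^{0,\alpha}(Q_1)}\le\|f\|_{C^{0,\alpha}(Q_1)}$. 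Since $\|v_\e\|_{L^\infty}\le C|f(0)|\le C\|f\|_{C^{0,\alpha}(Q_1)}$, replacing $u_\e$ by $\tilde u_\e$ costs only an additive $C\|f\|_{C^{0,\alpha}}$. The condition $\e^{-1}\in 4\N$ required by Lemmas \ref{lemma-L-2} and \ref{lemma-L-3} is then removed by the same small-dilation trick used at the end of the proof of Theorem \ref{C-1-a-thm}: one applies the conclusion to $(u_\e(tx),t^{-1}p_\e(tx))$ for some $t\in(1/2,1)$ with $t\e^{-1}\in 4\N$.

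With these reductions in place, set $M:=\bigl(\fint_{Q_1}|u_\e|^2\bigr)^{1/2}+\|f\|_{C^{0,\alpha}(Q_1)}$ and let $E(k)\in\R^d$ denote the minimizer in \eqref{L-3-0}. For every $k\ge 1$ with $\e<\theta^{k-1}\e_0$, Lemma \ref{lemma-L-2} gives
\begin{equation*}
\left(\fint_{Q_{\theta^k}}|u_\e-\mu^{-1}W(x/\e)E(k)|^2\right)^{1/2}\le\theta^{k\beta}M,
\end{equation*}
while Lemma \ref{lemma-L-3} gives $|E(k)|\le CM$. Using the uniform bound $\|W\|_{L^\infty}\le C$, the triangle inequality produces
\begin{equation*}
\left(\fint_{Q_{\theta^k}}|u_\e|^2\right)^{1/2}\le\theta^{k\beta}M+C|E(k)|\le CM.
\end{equation*}

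Given an arbitrary $r\in[\e,1)$, I would pass to an adjacent dyadic scale $\theta^k$ by inclusion. Let $k_0$ be the largest integer with $\e<\theta^{k_0-1}\e_0$, so that $\theta^{k_0}\asymp\e$. If $r\ge\theta^{k_0}$, choose the largest $k\le k_0$ with $\theta^k\ge r$; if $r<\theta^{k_0}$, take $k=k_0$, noting that $\theta^{k_0}\le C\e\le Cr$. In either case $\theta^k\le Cr$, so $\fint_{Q_r}|u_\e|^2\le C\fint_{Q_{\theta^k}}|u_\e|^2\le CM^2$. Undoing the $f(0)$-subtraction and the initial rescaling then yields \eqref{L-in-1}. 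I do not anticipate any genuine obstacle: the substantive work has already been encapsulated in Lemma \ref{lemma-L-3}, whose proof exploits the telescoping structure of the iterated $C^{0,\alpha}$-type decay, and the steps above are essentially reduction and bookkeeping.
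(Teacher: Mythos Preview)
Your approach is essentially the paper's: reduce to $R=1$, $f(0)=0$, and $\e^{-1}\in 4\N$, combine Lemmas \ref{lemma-L-2} and \ref{lemma-L-3} via the triangle inequality, then remove the arithmetic constraint by the dilation $x\mapsto tx$. Two minor points are worth flagging.

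First, the reduction to $f(0)=0$ is simpler than you make it. The paper (see the proof of Theorem \ref{C-1-a-thm}) just observes that $(u_\e,\, p_\e - f(0)\cdot x)$ solves the Stokes system with right-hand side $f-f(0)$; since $u_\e$ is unchanged, no correction to the velocity is needed at all. Your subtraction of $\mu^{-1}W(x/\e)f(0)$ is correct but adds bookkeeping.

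Second, your appeal to ``$\|W\|_{L^\infty}\le C$'' is not justified under the standing hypothesis that $\partial Y_s$ is merely Lipschitz: the cell corrector $W$ is only known to lie in $H^1_{\text{per}}$. This does not break the argument, because every average you actually need is over a cube of side at least $\e$ (you have $\theta^k\ge r\ge\e$ in your first case and $\theta^{k_0}>r\ge\e$ in your second), and by periodicity
\[
\Big(\fint_{Q_s}|W(x/\e)|^2\Big)^{1/2}\le C \qquad\text{for all } s\ge\e .
\]
Replace $\|W\|_{L^\infty}$ by this $L^2$ average bound (both in the triangle-inequality step and in estimating the cost of subtracting $v_\e$) and the proof goes through.
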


\begin{proof}
As in the proof of Theorem \ref{C-1-a-thm}, we may assume  $f(0)=0$.
It follows from Lemmas \ref{lemma-L-2} and  \ref{lemma-L-3} that
\eqref{L-in-1} holds for $\e\le  r< R=1$, if $\e^{-1} \in 4\N$.
The extra condition $\e^{-1} \in 4\N$ may be eliminated by considering 
$(u_\e (tx), p_\e (tx))$ for $t\in (1/2, 1)$, as in the proof of Theorem \ref{C-1-a-thm}.
Finally, the general case $\e\le r< R< \infty$ follows by a dilation argument. 
\end{proof}

\begin{proof}[\bf Proof of Theorem \ref{main-thm-1}]
The estimate for the second term in the right-hand side of \eqref{main-1}
is contained in Theorem \ref{L-infty-thm}.
For the first term, we apply the Caccioppoli inequality \eqref{Ca-1}.
\end{proof}

\begin{remark}\label{remark-small}
{\rm 
The large-scale estimates in
Theorems \ref{C-1-a-thm} and \ref{L-infty-thm} hold under the assumption that
$Y_s$ is an open subset with Lipschitz boundary.
Suppose that $Y_s$ is an open set with $C^{1, \alpha}$ boundary for some $\alpha>0$.
Using the classical Lipschitz estimates for the Stokes equations in $\widetilde{Y_f}=(1+\delta) Y \setminus Y_s$ 
\cite{Galdi, Gia} and
a rescaling argument, we see that
$$
\aligned
 & \| u_\e\|_{L^\infty(\e(Y+z))}
+\e \|\nabla u_\e\|_{L^\infty (\e (Y +z))}\\
& \le C \left\{ \left(\fint_{2\e (Y+z)} |u_\e|^2 \right)^{1/2}
+ \| f- f(z)\|_{L^\infty(2\e(Y+z))} \right\},
\endaligned
$$
for any $z\in\mathbb{Z}^d$,
where $C$ depends only on $d$, $\mu$, and $Y_s$.
This, together with \eqref{L-in-1}, gives
\begin{equation}\label{full-1}
\aligned
\| u_\e\|_{L^\infty (Q_{R/2})}
+ \e \| \nabla u_\e\|_{L^\infty (Q_{R/2})}
\le C \left\{
\left(\fint_{Q_R} |u_\e|^2 \right)^{1/2}
+ R^\alpha \| f\|_{C^{0, \alpha}(Q_R)} \right\},
\endaligned
\end{equation}
where $0<\e< R/2$ and $C$ depends only on $d$, $\mu$,  $\alpha$, and $Y_s$.
}
\end{remark}


\section{Large-scale estimates for the pressure}\label{section-p}

\begin{thm}\label{p-thm}
Let $(u_\e, p_\e)$ be a weak solution of the Stokes equations  \eqref{Stokes} in $Q_R^\e$ with
$u_\e=0$ on $Q_R\cap \partial (\e\omega)$, where $0< \e < R$ and $f\in C^{ \alpha}(Q_R; \R^d)$ for some $0<\alpha<1$.
Then
\begin{equation}\label{p-0}
\aligned
& \inf_{ \substack{ E\in \R^d\\  \gamma \in \R}}
\frac{1}{r} \left(\fint_{Q_r^\e} 
|p_\e - \gamma -x \cdot f(0) -  (\e \pi (x/\e) -x) \cdot E|^2 \right)^{1/2}\\
& \le C \left(\frac{r}{R} \right)^\beta
\left\{ \left(\fint_{Q_R^\e} |u_\e|^2 \right)^{1/2}
+ R^\alpha \| f\|_{C^{0, \alpha} (Q_R)} \right\},
\endaligned
\end{equation}
for any $ \e\le r<R/2$, where $C$ depends only on $d$, $\mu$, $\alpha$, $\beta$, and $Y_s$.
\end{thm}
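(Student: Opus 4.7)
The strategy is to reduce the pressure estimate to the velocity estimate in Theorem \ref{main-thm-2} by applying the pressure part of the Caccioppoli inequality \eqref{Ca-1} to a suitably corrected error solution. The crucial observation---a $\mu$-scaled version of Remark \ref{remark-L-1}---is that for every $E\in\R^d$ the pair
$$
\bigl(\mu^{-1} W(x/\e) E,\ (\e\pi(x/\e)-x)\cdot E\bigr)
$$
is a weak solution of the \emph{homogeneous} Stokes system in $\e\omega$ with zero Dirichlet condition on $\partial(\e\omega)$, a direct consequence of the cell problem \eqref{W}; furthermore $\nabla(x\cdot f(0))=f(0)$ accounts exactly for the constant part of the source.

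The plan is to focus on the main range $2\e\le r\le R/4$. The border cases $\e\le r<2\e$ (where $r\sim\e$ and averages over $Q_r^\e$ are comparable to averages at scale $2\e$) and $R/4\le r<R/2$ (where $(r/R)^\beta$ is bounded below, so the desired bound reduces to a direct application of \eqref{Ca-1} on $Q_R^\e$ with $E=0$ and $\gamma=\fint_{Q_R^\e} p_\e$) are handled as straightforward supplements by rescaling. In the main range, I would let $E\in\R^d$ be the optimizer in \eqref{main-2} at scale $2r$ and define
$$
v := u_\e - \mu^{-1} W(x/\e) E,\qquad q := p_\e - x\cdot f(0) - (\e\pi(x/\e)-x)\cdot E.
$$
The observation above, together with $\text{\rm div}(W(x/\e) E)=0$ and the vanishing of each $W_j$ on $\partial Y_s$, shows that $(v,q)$ is a weak solution of
$$
-\e^2\mu\Delta v + \nabla q = f-f(0),\quad \text{\rm div}(v)=0\ \text{ in } Q_{2r}^\e,\qquad v=0\ \text{ on } Q_{2r}\cap\partial(\e\omega).
$$
Applying Remark \ref{remark-Ca} (valid since $r\ge 2\e$) to $(v,q)$ on $Q_{2r}^\e\to Q_r^\e$ and choosing $\gamma=\fint_{Q_r^\e} q$ to absorb the mean yields
$$
\frac{1}{r}\Bigl(\fint_{Q_r^\e}\bigl|p_\e-\gamma-x\cdot f(0)-(\e\pi(x/\e)-x)\cdot E\bigr|^2\Bigr)^{1/2}\le C\Bigl(\fint_{Q_{2r}^\e}|v|^2\Bigr)^{1/2}+C\|f-f(0)\|_{L^\infty(Q_{2r})}.
$$

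To finish, both terms on the right are controlled. Since $v$ vanishes on $Q_{2r}\setminus Q_{2r}^\e$, the first is comparable to $(\fint_{Q_{2r}}|v|^2)^{1/2}$, which by Theorem \ref{main-thm-2} applied at scale $2r$ is bounded by $C(r/R)^\beta\{(\fint_{Q_R}|u_\e|^2)^{1/2}+R^\alpha\|f\|_{C^{0,\alpha}(Q_R)}\}$; the second is at most $(2r)^\alpha\|f\|_{C^{0,\alpha}(Q_R)}\le C(r/R)^\beta R^\alpha\|f\|_{C^{0,\alpha}(Q_R)}$ using $\beta<\alpha$ and $r\le R$. Summing these bounds gives \eqref{p-0}.

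The main obstacle is conceptual, not technical: it is the identification of the correct pressure ansatz---recognizing that the pressure companion to the velocity corrector $\mu^{-1} W(x/\e) E$ is precisely $(\e\pi(x/\e)-x)\cdot E$, so that $(v,q)$ solves a Stokes system with the small source $f-f(0)$ while retaining the zero Dirichlet condition on the perforations. Once this ansatz is in place, the rest is a routine combination of the pressure Caccioppoli inequality with the large-scale $C^{1,\alpha}$ velocity estimate, transferring the $(r/R)^\beta$ decay from the velocity to the pressure.
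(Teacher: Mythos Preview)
Your proposal is correct and follows essentially the same route as the paper: both subtract the corrector pair $(\mu^{-1}W(x/\e)E,\ (\e\pi(x/\e)-x)\cdot E)$ together with the linear term $x\cdot f(0)$, apply the pressure part of the Caccioppoli inequality to the resulting solution with source $f-f(0)$, and then invoke the large-scale $C^{1,\alpha}$ velocity estimate (your Theorem~\ref{main-thm-2}, the paper's Theorem~\ref{C-1-a-thm}) to produce the $(r/R)^\beta$ factor. The only cosmetic differences are that the paper rescales to $r=1$ (absorbing your border cases) and quotes \eqref{C-1-a} directly rather than \eqref{main-2}.
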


\begin{proof}
By rescaling  we may assume $r=1$.
We may also assume   $\e^{-1}\in  \N$.
By  the Caccioppoli inequality \eqref{Ca-1},
\begin{equation}\label{p-0-1}
\inf_{\gamma\in \R} 
\| p_\e -\gamma  \|_{L^2(Q_1^\e)}
\le C \big\{ \| u_\e\|_{L^2(Q_{2})} + \| f\|_{L^2(Q_{2})}\big\}.
\end{equation}
By applying the estimate above to the solution 
$$
v_\e = u_\e - \mu^{-1} W(x/\e) E \quad \text{ and } \quad
q_\e=p_\e - (\e \pi (x/\e)-x) \cdot E - x \cdot f(0),
$$
we obtain 
$$
\aligned
 & \inf_{\substack{ E\in \R^d\\ \gamma \in \R}}
\left(\fint_{Q_1^\e}
|p_\e - \gamma - ( \e \pi (x/\e) -x)\cdot E - x \cdot f(0)|^2 \right)^{1/2}\\
& \le C
\inf_{E\in \R^d}
\left(\fint_{Q_{2}}
|u_\e - \mu^{-1} W(x/\e) E|^2 \right)^{1/2}
+ C  \| f\|_{C^{0, \alpha}(Q_{2})} \\
&\le C \left(\frac{1}{R} \right)^\beta\left\{ 
\left(\fint_{Q_R} |u_\e|^2\right)^{1/2}
+ R^\alpha \| f\|_{C^{0, \alpha}(Q_R)} \right\},
\endaligned
$$
where we have used \eqref{C-1-a} for the last step.
\end{proof}

\begin{thm}\label{p-thm-1}
Let $(u_\e, p_\e)$ be the same as in Theorem \ref{p-thm}.
Then
\begin{equation}\label{p-1-0}
\inf_{\gamma\in \R^d}
\frac{1}{r}
\left(\fint_{Q_r^\e} |p_\e -\gamma - x\cdot f(0)|^2 \right)^{1/2}
 \le C 
\left\{ \left(\fint_{Q_R} |u_\e|^2 \right)^{1/2}
+ R^\alpha \| f\|_{C^{0, \alpha} (Q_R)} \right\},
\end{equation}
for any $ \e\le r<R/2$, where $C$ depends only on $d$, $\mu$,  $\alpha$,  and $Y_s$.
\end{thm}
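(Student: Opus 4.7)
The plan is to derive \eqref{p-1-0} directly from the Caccioppoli-type inequality in Remark \ref{remark-Ca} and the sup-type $L^2$ velocity bound in Theorem \ref{L-infty-thm}, bypassing the intermediate pressure estimate \eqref{p-0} altogether. The key observation is that the modified pair $(u_\e,\, p_\e - x\cdot f(0))$ still satisfies the Stokes system in $Q_R^\e$ with the same boundary condition, but with right-hand side $f - f(0)$ in place of $f$; and this modified forcing vanishes at the origin and obeys $\|f-f(0)\|_{L^\infty(Q_R)} \le C R^\alpha\|f\|_{C^{0,\alpha}(Q_R)}$. Absorbing the constant $f(0)$ into a linear term in the pressure is exactly what aligns the expression with the particular form $p_\e - \gamma - x\cdot f(0)$ on the left side of \eqref{p-1-0}.

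For $r$ in the main regime $2\e \le r \le R/2$, I would apply Remark \ref{remark-Ca} to this modified pair at radius $r$; the conditions $r \ge 2\e$ and $2r \le R$ are precisely what is needed. This yields
\begin{equation*}
\frac{1}{r}\inf_{\gamma\in\R}\left(\fint_{Q_r^\e}|p_\e - \gamma - x\cdot f(0)|^2\right)^{1/2}
\le C\left(\fint_{Q_{2r}^\e}|u_\e|^2\right)^{1/2} + C\left(\fint_{Q_{2r}^\e}|f-f(0)|^2\right)^{1/2}.
\end{equation*}
The second right-hand term is immediately bounded by $CR^\alpha\|f\|_{C^{0,\alpha}(Q_R)}$. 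For the first, since $u_\e$ is extended by zero outside $\e\omega$ and the volume ratio $|Q_{2r}|/|Q_{2r}^\e|$ is controlled by a constant depending only on $d$ and $Y_s$ whenever $2r \ge \e$, we have $\fint_{Q_{2r}^\e}|u_\e|^2 \le C\fint_{Q_{2r}}|u_\e|^2$; Theorem \ref{L-infty-thm} applied with radii $2r < R$ then bounds this by $CM^2$, where $M$ denotes the bracketed right-hand side of \eqref{p-1-0}.

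The residual range $\e \le r < 2\e$ is reduced to the main regime by a volume comparison: setting $r_0 := \min(2\e,\, R/2)$, one has $|Q_r^\e| \sim \e^d \sim |Q_{r_0}^\e|$ and $r^{-1} \sim r_0^{-1}$, so the left side of \eqref{p-1-0} at scale $r$ is controlled by its analogue at scale $r_0$ up to a multiplicative constant depending only on $d$ and $Y_s$. When $R \ge 4\e$ the scale $r_0 = 2\e$ falls inside the main regime already treated; when $R < 4\e$ the scales satisfy $R \sim \e \sim r$ and the bound reduces to a direct energy estimate for Stokes on a bounded number of periodic cells. The statement is essentially a short corollary of the tools already in place, and no substantial obstacle is anticipated.
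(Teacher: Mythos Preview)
Your proposal is correct and follows essentially the same approach as the paper: both arguments pass to the modified pair $(u_\e,\, p_\e - x\cdot f(0))$ solving the Stokes system with right-hand side $f-f(0)$, apply the Caccioppoli-type pressure bound (Remark~\ref{remark-Ca}, equivalently the inequality \eqref{p-0-1} used in the paper), and then invoke Theorem~\ref{L-infty-thm} to control the velocity average. The only cosmetic difference is that the paper first rescales to $r=1$ (and tacitly uses the alignment $\e^{-1}\in\mathbb N$ as in the proof of Theorem~\ref{C-1-a-thm}), whereas you keep the scale $r$ explicit and split off the range $\e\le r<2\e$; both routes lead to the same estimate with no substantive divergence.
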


\begin{proof}
As in the proof of Theorem \ref{p-thm},
we may assume that $r=1$ and $\e^{-1} \in \N$.
It follows from \eqref{p-0-1} that
$$
\inf_{\gamma \in \R}
\| p_\e - \gamma - x\cdot f(0)\|_{^2(Q_1^\e)}
\le C \big\{ \| u_\e\|_{L^2(Q_1)} +  \| f\|_{C^{0, \alpha}(Q_2)}\big\} .
$$
The desired estimate now follows readily from \eqref{L-in-1}.
\end{proof}

\begin{remark}\label{remark-p}
{\rm
Let $(u_\e, p_\e)$ be the same as in Theorem \ref{p-thm}.
It follows from \eqref{p-1-0} that 
\begin{equation}\label{p-2-0}
\frac{1}{r}
\left(\fint_{Q_r^\e}
|p_\e - \fint_{Q_r^\e} p_\e |^2\right)^{1/2}
\le C 
\left\{ \left(\fint_{Q_R} |u_\e|^2 \right)^{1/2}
+\| f\|_{L^\infty(Q_R)}
+ R^\alpha \| f\|_{C^{0, \alpha} (Q_R)} \right\},
\end{equation}
for $\e\le r< R/2$.
This implies that
\begin{equation}
\Big|
\fint_{Q_{2r}^\e} p_\e -\fint_{Q_r^\e} p_\e\Big|
\le C r
\left\{ \left(\fint_{Q_R} |u_\e|^2 \right)^{1/2}
+\| f\|_{L^\infty(Q_R)}
+ R^\alpha \| f\|_{C^{0, \alpha} (Q_R)} \right\}
\end{equation}
for $\e\le r< R/4$.
It follows that
\begin{equation}\label{full-p-1}
\Big|
\fint_{Q_{\e }^\e} p_\e -\fint_{Q_{R/2}^\e} p_\e\Big|
\le C R
\left\{ \left(\fint_{Q_R} |u_\e|^2 \right)^{1/2}
+\| f\|_{L^\infty(Q_R)}
+ R^\alpha \| f\|_{C^{0, \alpha} (Q_R)} \right\}.
\end{equation}
Suppose that $Y_s$ is an open subset of $Y$ with $C^{1, \alpha}$ boundary.
By the classical local estimates for the Stokes equations in $(1+\delta)Y\setminus \overline{Y_s} $ \cite{Galdi, Gia} and a rescaling argument,
$$
\| p_\e -\fint_{\e (Y_f+z)} p_\e \|_{L^\infty (\e (Y_f +z))}
\le C \e \left\{
\left(\fint_{2\e (Y_f+z)} |u_\e|^2 \right)^{1/2}
+ \| f\|_{L^\infty(2\e (Y_f+z))} \right\}.
$$
This, together with \eqref{full-p-1}
\begin{equation}\label{full-p-2}
\| p_\e - \fint_{Q_{R/2}^\e} p_\e \|_{L^\infty (Q_{R/2}^\e)}
\le  C R  \left\{
\left(\fint_{Q_R} |u_\e|^2 \right)^{1/2}
+ \| f\|_{L^\infty(Q_R)}
+ R^\alpha \| f\|_{C^{0, \alpha}(Q_R)} \right\},
\end{equation}
where $0<\e<R/4$ and $C$ depends only on $d$, $\mu$, $\alpha$,  and $Y_s$.
}
\end{remark}

We end this section  by establishing a Liouville property for the Stokes equations in $\omega$.

\begin{thm}\label{LP-thm}
Let $(u, p )\in H^1_{loc} (\omega; \R^d) \times L^2_{loc} (\omega)$ be a weak solution of the Stokes
equations 
$$
-\mu \Delta u +\nabla p=f \quad \text{ and } \quad \text{\rm div}(u)=0 \qquad \text{ in } \omega,
$$
with $u=0$ on $\partial\omega$, where $f$ is constant.
Assume that there exist some $C>0$ and $\sigma \in (0, 1)$ such that 
\begin{equation}
\left(\fint_{Q_R} |u|^2\right)^{1/2}
\le C R^\sigma
\end{equation}
for any $R>1$. Then
\begin{equation}
u=\mu^{-1}  W(x)E \quad \text{ and } \quad p=  (\pi (x) -x) \cdot E+ x\cdot f(0) +\gamma,
\end{equation}
for some $E\in \R^d$ and $ \gamma \in \R$.
\end{thm}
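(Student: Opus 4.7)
The plan is to apply the large-scale $C^{1,\alpha}$ estimate (Theorem \ref{main-thm-2}) to $u$ on cubes $Q_R$ with $\varep = 1$, let $R \to \infty$, and use the sub-linear growth to force the infimum on the left-hand side to vanish on every fixed scale. To handle the constant forcing, I would first subtract an explicit particular solution, reducing to the homogeneous case.

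\emph{Step 1 (reduction to $f = 0$).} Since $f$ is constant, the cell problem \eqref{W} shows that $u_0(x) := \mu^{-1} W(x) f$ and $p_0(x) := \pi(x) \cdot f$ solve $-\mu \Delta u_0 + \nabla p_0 = f$, $\text{\rm div}(u_0) = 0$ in $\omega$, with $u_0 = 0$ on $\partial\omega$. Because $u_0$ is 1-periodic, $(\fint_{Q_R} |u_0|^2)^{1/2}$ is bounded uniformly in $R \ge 1$. Thus $v := u - u_0$ and $q := p - p_0$ solve the homogeneous Stokes system in $\omega$ with $v = 0$ on $\partial\omega$, and $v$ inherits the bound $(\fint_{Q_R} |v|^2)^{1/2} \le C R^\sigma$.

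\emph{Step 2 (blow-down).} Fix $\alpha \in (\sigma, 1)$ and $\beta \in (\sigma, \alpha)$. Applying Theorem \ref{main-thm-2} to $v$ with $\varep = 1$ and zero right-hand side, for any $r \ge 1$ and $R > 2r$,
\[
\inf_{E \in \R^d} \Bigl(\fint_{Q_r} |v - \mu^{-1} W(x) E|^2\Bigr)^{1/2} \le C \Bigl(\frac{r}{R}\Bigr)^\beta \Bigl(\fint_{Q_R} |v|^2\Bigr)^{1/2} \le C r^\beta R^{\sigma - \beta}.
\]
Since $\sigma < \beta$, letting $R \to \infty$ forces the infimum to vanish. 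The non-degeneracy bound \eqref{L-3-2} (already established in the proof of Lemma \ref{lemma-L-3}) shows that the linear map $E \mapsto \mu^{-1} W(\cdot) E$ is bounded below on $L^2(Q_r; \R^d)$ for $r \ge 1$; hence it has closed range, the infimum is attained at a unique $E(r) \in \R^d$, and $v = \mu^{-1} W(x) E(r)$ a.e.\ on $Q_r \cap \omega$. Compatibility on overlapping cubes forces $E(r) \equiv E'$ constant, so $v = \mu^{-1} W(x) E'$ throughout $\omega$.

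\emph{Step 3 (pressure).} A direct computation from \eqref{W} gives $-\mu \Delta(\mu^{-1} W(x) E') + \nabla((\pi(x) - x) \cdot E') = 0$ in $\omega$, so $\nabla q = \nabla((\pi - x) \cdot E')$ and $q = (\pi(x) - x) \cdot E' + \gamma$ for some $\gamma \in \R$. Setting $E := E' + f$, we obtain $u = v + u_0 = \mu^{-1} W(x) E$; and since $p = q + p_0 = (\pi(x) - x) \cdot E' + \pi(x) \cdot f + \gamma = (\pi(x) - x) \cdot E + x \cdot f + \gamma$, we recover the claimed formulas (with $f(0) = f$).

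The only subtle point is the attainment and uniqueness of the minimizer $E(r)$, together with its consistency across $r$; this is handled cleanly by \eqref{L-3-2}. The remainder is a standard blow-down that the large-scale $C^{1,\alpha}$ estimate is purpose-built to power, so I do not anticipate any substantial obstacle.
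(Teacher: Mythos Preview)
Your proposal is correct and follows essentially the same route as the paper: apply the large-scale $C^{1,\alpha}$ estimate \eqref{C-1-a} with $\varepsilon=1$, send $R\to\infty$, and use the sub-linear growth to conclude that $u$ coincides with $\mu^{-1}W(x)E$ on every $Q_r$, then recover the pressure. The differences are purely cosmetic. You first subtract the explicit particular solution $\mu^{-1}W(x)f$ to reduce to $f=0$; the paper instead applies \eqref{C-1-a} directly to $(u,p)$, relying (implicitly) on the fact that for constant $f$ the H\"older term contributes nothing after the reduction $f\mapsto f-f(0)$ built into the proof of Theorem~\ref{C-1-a-thm}. For the consistency of the vectors $E(r)$ across scales, you invoke the injectivity estimate \eqref{L-3-2}, while the paper integrates over a single period and uses that $K=\int_Y W$ is invertible to identify $E(k)=\mu K^{-1}\int_{Y_f}u$ independently of $k$. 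Either device does the job; the remainder of the argument (connectedness of $\omega$ for the pressure) is identical.
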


\begin{proof}
Choose $\alpha$, $\beta$ so that $\sigma < \beta< \alpha<1$.
We apply the estimate \eqref{C-1-a} with $\e=1$ to $(u, p)$ and let $R\to \infty$.
It follows that  for each $k\in \N$,
$
u = \mu^{-1} W(x)E (k)   \text{ in } Q_k\cap \omega
$
for some $E(k) \in \R^d$. 
Since
$$
\mu \int_{Y_f}   u\, dx =\int_{Y_f} W(x) E(k)\, dx = K E(k),
$$
and $K=(K_j^i)$ is invertible, we see that $E(k) =E(k+1)$ for any $k\in \N$. 
This implies that $u=\mu^{-1} W(x) E$  in $\R^d$ for some $E\in \R^d$.
It follows that
$$
\nabla \{ p-  (\pi (x) -x) \cdot E - x \cdot f(0)\} =0 \quad \text{ in } \omega.
$$
Since $\omega$ is connected, we conclude that
$p=(\pi (x) -x )\cdot E + x\cdot f(0) +\gamma$ for some $\gamma \in \R$.
\end{proof}


\section{Uniform $W^{k, p}$ estimates}\label{section-W}

In this section we give the proof of Theorems \ref{main-thm-3} and \ref{main-thm-4}.
By rescaling we may assume $\e=1$.

\begin{proof}[\bf Proof of Theorem \ref{main-thm-3}]

\noindent{\bf Step 1.}
The case  $q=2$. 

Le $V$ denote the closure of $\mathcal{V}$ in $W_0^{1, 2}(\omega; \R^d)$, where
\begin{equation}\label{V}
\mathcal{V}= \big\{ \psi \in C_0^\infty(\omega; \R^d): \ \text{\rm div}(\psi)=0 \text{ in } \omega \big\}.
\end{equation}
Using the inequality 
$
\| u\|_{L^2(\omega)} \le C \|\nabla u \|_{L^2(\omega)}
$
for any $u\in W_0^{1, 2} (\omega)$, and the Lax-Milgram Theorem, one may show that for each $F\in L^2(\R^d; \R^d)$ and
 $f\in L^2(\R^d; \R^{d\times d} )$, there
exists  a unique $u\in V$ such that
\begin{equation}\label{weak}
\mu \int_\omega \nabla u \cdot \nabla \psi \, dx =\int_\omega F \cdot \psi \, dx -\int_\omega f \cdot \nabla \psi\, dx
\end{equation}
for any $\psi \in V$. Moreover, $u$ satisfies the estimate \eqref{main-3-1} with $q=2$ and $\e=1$, and
$-\mu \Delta u +\nabla p =F+\text{div}(f)$ in $\omega$ for some
$p\in L^2_{\loc}(\omega)$.

 \medskip
 
 \noindent{\bf Step 2.}
 Let $(u, p)$ be the weak solution of \eqref{main-3-0} with $\e=1$,  given by Step 1, where  $F\in C_0^\infty(\R^d, \R^d)$ and
  $f\in C_0^\infty(\R^d; \R^{d\times d} ) $. 
  We prove the estimate \eqref{main-3-1} for $2<q<\infty$ by a real variable method. 
  
  Consider the linear operator,
  $$
  T(F, f)= u,
  $$
  where  $F\in L^2(\R^d; \R^{d} )$, $f\in L^2(\R^d; \R^{d\times d} )$,  and $u$ is the  solution of \eqref{main-3-0} with $\e=1$, given by Step 1.
  Clearly, $\| T(F, f) \|_{L^2(\R^d)} \le C \| (F, f) \|_{L^2(\R^d)}$.
  We claim that if 
  supp$(F)$, supp$(f) \subset \R^d \setminus Q(x_0, 4R)$ for some $x_0\in \R^d$ and $R>0$, then
  \begin{equation}\label{W-12}
  \| T(F, f)\|_{L^\infty(Q(x_0, R)) }
  \le C \left(\fint_{Q(x_0, 4R)} |T(F, f)|^2\right)^{1/2}.
  \end{equation}
  Indeed, since $F=0$ and $f=0$ in $Q(x_0, 4R)$, we have 
  $-\mu\Delta  u +\nabla p =0$ and div$(u)=0$ in $Q^1(x_0, 4R)$, and $u=0$ on $\partial\omega$.
  If $0< R<2$, by the classical $L^\infty$  estimates for the Stokes equations
  in $C^{1, \alpha}$ domains, we obtain
    $$
   \max_{Q(x_0, R)} |u| 
  \le C \left(\fint_{Q(x_0, 4R)} |u| ^2\right)^{1/2}.
  $$
  If $R>2$, in view of \eqref{full-1}, the inequality  above continues to hold.
  As a result, by \cite[Theorem 4.2.5]{Shen-book}, we deduce that
  $$
\| T(F, f)\|_{L^q(\R^d)}
\le C_q \| (F, f)\|_{L^q(\R^d)}
$$
for any $q>2$ and  $F\in C_0^\infty(\R^d; \R^d)$, $f\in C_0^\infty( \R^d; \R^{d\times d} )$,
where $C_q$ depends only on $d$, $\mu$, $q$, and $Y_s$.
This gives the desired estimate for $u$. To bound $\nabla u$, we use the local estimate \cite{Galdi},
\begin{equation}\label{c-1}
\int_{Y_f+z} |\nabla u|^q \, dx
\le C \left\{ \int_{\widetilde{Y_f} +z} |u|^q\, dx
+ C \int_{\widetilde{Y_f} +z} |F|^q\, dx
+ \int_{\widetilde{Y_f} +z} |f|^q\, dx \right\}
\end{equation}
for $1<q<\infty$,
where $z\in \mathbb{Z}^d$ and $\widetilde{Y_f} = (1+\delta) Y \setminus \overline{Y_s}$.
It follows from \eqref{c-1} by summing over $z\in \mathbb{Z}^d$ that
\begin{equation}\label{c-2}
\aligned
\| \nabla u\|_{L^q(\omega)}
 & \le C \big\{ \| u \|_{L^q(\omega)} + \| F \|_{L^q(\R^d)} + \| f\|_{L^q(\R^d)} \big\}\\
 & \le C \big\{  \| F \|_{L^q(\R^d)} + \| f\|_{L^q(\R^d)} \big\}.
\endaligned
\end{equation}

\medskip

\noindent{\bf Step 3.} Let $(u, p)$ be the weak solution of \eqref{main-3-0},  given by Step 1, where  $F\in C_0^\infty(\R^d; \R^d)$ and 
  $f\in C_0^\infty(\R^d; \R^{d\times d} ) $. 
  We prove the estimate \eqref{main-3-1} for $1<q< 2$ by a duality argument.
  
  Let $(v, \tau)$ be the weak solution of \eqref{main-3-0}, given by Step 1, with $G\in C_0^\infty(\R^d; \R^d)$ in the place of $F$ and $g\in C_0^\infty(\R^d; \R^{d\times d})$
  in the place of $f$. Since $u, v \in V$, by \eqref{weak},
  $$
\int_\omega F \cdot v\, dx   -\int_\omega f \cdot \nabla v\, dx =\int_\omega \nabla u\cdot \nabla v\, dx
  =\int_\omega G \cdot u\, dx - \int_\omega g\cdot \nabla u\, dx.
  $$
It follows that
$$
\aligned
\Big| \int_\omega G  \cdot  u \, dx - \int_\omega g \cdot \nabla u\, dx \Big|
 & \le  \|  F \|_{L^{q} (\R^d)} \| v \|_{L^{q^\prime} (\R^d)} + 
 \|  f\|_{L^q(\R^d)} \| \nabla v \|_{L^{q^\prime}(\R^d)} \\
& \le C\big \{   \| G \|_{L^{q^\prime}(\R^d)} + 
\| g\|_{L^{q^\prime}(\R^d)}  \big\}
\big \{ \| F \|_{L^q(\R^d)} + \| f\|_{L^{q}(\R^d)} \big \}.
\endaligned
$$
By duality we obtain $\| \nabla u\|_{L^q(\R^d)} +\| u\|_{L^q(\R^d)}  \le C \{ \| F\|_{L^q(\R^d)} + \| f\|_{L^q(\R^d)} \}$ for $1< q< 2$.

\medskip

\noindent{\bf Step 4.}
The existence of solutions $u$ in $W_0^{1, q}(\omega; \R^d)$ with the estimate \eqref{main-3-1}   for general  $F\in L^q(\R^d; \R^d)$ and
$f\in L^q(\R^d; \R^{d\times d})$ follows readily from Steps 2 and 3 by a density argument.
We note that the estimate for $\nabla p$ in $W^{-1, q}(\omega; \R^d)$ follows from the equation $\nabla p=\mu \Delta u +F +\text{\rm div}(f)$.

\medskip

\noindent{\bf Step 5.} To establish  the uniqueness of solutions in $W_0^{1, q} (\omega; \R^d)$, we assume that
$u\in W^{1, q}_0(\omega; \R^d)$ is a solution
of \eqref{main-3-0} with $\e=1$ and $F=0$, $f=0$. By local estimates for the Stokes
equations in $\widetilde{Y_f} $ (see e.g. \cite{Galdi}),
$$
\max_{Y_f +z} |u|
\le C \left(\int_{\widetilde{Y_f}+z} |u|^q\, dx \right)^{1/q},
$$
where $z\in \mathbb{Z}^d$.
Since $u\in L^q(\omega; \R^d)$,
it follows that $u$ is bounded in $\omega$.
In view of Theorem \ref{LP-thm}, we deduce that $u=\mu^{-1} W(x)E$ for some $E\in \R^d$.
This shows that $u$ is 1-periodic, and thus $u=0$ in $\omega$.
\end{proof}

\begin{proof}[\bf Proof of Theorem \ref{main-thm-4}]
The uniqueness is contained in Theorem \ref{main-thm-3}.
To establish the existence and the estimate \eqref{main-4-1} with $\e=1$,
we use the local estimate \cite{Galdi},
\begin{equation}\label{c-3}
\sum_{\ell=0}^k \int_{Y_f+z} |\nabla^\ell  u|^q \, dx
\le C \left\{ \int_{\widetilde{Y_f} +z} |u|^q\, dx
+ \sum_{\ell=0}^{k-2}  \int_{\widetilde{Y_f} +z} |\nabla^\ell F|^q\, dx
 \right\}
\end{equation}
for $1<q<\infty$, where $z\in \mathbb{Z}^d$.
This yields that
$$
\aligned
\sum_{\ell=0}^k \| \nabla^ \ell u\|_{L^q(\omega)}
 & \le C \Big\{ \| u\|_{L^q(\omega)} + \sum_{\ell=0}^{k-2} \| \nabla^\ell F\|_{L^q(\omega)} \Big\}\\
 & \le C  \sum_{\ell=0}^{k-2} \| \nabla^\ell F\|_{L^q(\omega)},
 \endaligned
 $$
 where we have used \eqref{main-3-1} to bound $\| u\|_{L^q(\omega)}$ for the last inequality.
 The estimate for $\nabla^\ell  p$ in $W^{-1, q}(\omega; \R^d)$ follows by using the
 equation $\nabla p= \mu \Delta u +F$.
\end{proof}


 \bibliographystyle{amsplain}
 
\bibliography{Darcy-1.bbl}

\bigskip

\begin{flushleft}

Zhongwei Shen,
Department of Mathematics,
University of Kentucky,
Lexington, Kentucky 40506,
USA.

E-mail: zshen2@uky.edu
\end{flushleft}

\bigskip

\end{document}